\documentclass[reqno, final,a4paper]{amsart}
\usepackage{color}
\usepackage[colorlinks=true,allcolors=blue
%,backref=page
]{hyperref}
\usepackage{amsmath, amssymb, amsthm}
\usepackage{mathrsfs}
\usepackage{mathtools}
\usepackage{tikz}
\tikzstyle{vertex}=[circle,draw=black,fill=black,inner sep=0,minimum size=0.2cm,text=white,font=\footnotesize]
\usepackage[noabbrev,capitalize,nameinlink]{cleveref}
\usepackage{fullpage}
\usepackage[noadjust]{cite}
\usepackage{graphics}
\usepackage{bbm}
\usepackage{url}
\usepackage[shortlabels]{enumitem}
\crefformat{enumi}{#2#1#3}
\crefrangeformat{enumi}{#3#1#4 to~#5#2#6}
\crefmultiformat{enumi}{#2#1#3}
{ and~#2#1#3}{, #2#1#3}{ and~#2#1#3}
\allowdisplaybreaks

% \left(\right) should behave the same as ()
\let\originalleft\left
\let\originalright\right
\renewcommand{\left}{\mathopen{}\mathclose\bgroup\originalleft}
\renewcommand{\right}{\aftergroup\egroup\originalright}

\crefname{equation}{}{} %remove ``Equation''
\AtBeginEnvironment{appendices}{\crefalias{section}{appendix}} %appendices

% ------   Theorem Styles -------
\numberwithin{equation}{section}
\newtheorem{theorem}{Theorem}[section]
\newtheorem{proposition}[theorem]{Proposition}
\newtheorem{lemma}[theorem]{Lemma}
\newtheorem{claim}[theorem]{Claim}
\crefname{claim}{Claim}{Claims}

\newtheorem{corollary}[theorem]{Corollary}
\newtheorem{conjecture}[theorem]{Conjecture}
\newtheorem*{question*}{Question}
\newtheorem{fact}[theorem]{Fact}

\theoremstyle{definition}
\newtheorem{definition}[theorem]{Definition}

\newtheorem*{definition*}{Definition}
\newtheorem{example}[theorem]{Example}
\theoremstyle{remark}
\newtheorem{remark}[theorem]{Remark}

% ----- Delimiters ----

\newcommand{\mb}{\mathbb}

\newcommand{\mc}{\mathcal}

\newcommand{\mr}{\mathrm}

\newcommand{\on}{\operatorname}

\renewcommand{\Pr}{\mb P}

\allowdisplaybreaks

\title{Exponentially many graphs are determined by their spectrum}

\author[Koval]{Illya Koval}

\address{Institute of Science and Technology Austria (ISTA).}
\email{illya.koval@ist.ac.at}

\author[Kwan]{Matthew Kwan}

\address{Institute of Science and Technology Austria (ISTA).}
\email{matthew.kwan@ist.ac.at}

\thanks{
Matthew Kwan was supported by ERC Starting Grant ``RANDSTRUCT'' No.\ 101076777.
}

\begin{document}

\newif\ifarxiv
\newif\ifjournal
\arxivtrue

\maketitle
\begin{abstract}
As a discrete analogue of Kac's celebrated question on ``hearing
the shape of a drum'', and towards a practical graph isomorphism
test, it is of interest to understand which graphs are determined
up to isomorphism by their spectrum (of their adjacency matrix). A
striking conjecture in this area, due to van Dam and Haemers, is that
``almost all graphs are determined by their spectrum'', meaning
that the fraction of unlabelled $n$-vertex graphs which are determined
by their spectrum converges to $1$ as $n\to\infty$.

In this paper we make a step towards this conjecture, showing that
there are exponentially many $n$-vertex graphs which are determined
by their spectrum. This improves on previous bounds
(of shape $e^{c\sqrt{n}}$). We also propose a number of further directions of research.
\end{abstract}

\section{Introduction}

A classical question, popularised in 1966 by Kac~\cite{Kac66}, is
whether one can ``hear the shape of a drum'': if we know the ``spectrum''
of a planar domain $D\subseteq\mb R^{2}$ (formally, the eigenfrequencies
of the wave equation on $D$, with Dirichlet boundary conditions),
is this enough information to reconstruct $D$ up to isometry? Famously
(and perhaps surprisingly), this answer to this question is ``no'':
in 1992, Gordon, Webb and Wolpert~\cite{GWW92} managed to construct
two different ``drums'' with the same spectrum.

A much shorter version of this story also took place in graph theory.
In 1956, in a paper studying connections between graph theory and
chemistry, G\"unthard and Primas~\cite{GP56} asked whether one
can reconstruct a graph up to isomorphism given the eigenvalues of
its adjacency matrix\footnote{The \emph{adjacency matrix} of a (simple) graph $G$, with vertices
$v_{1},\dots,v_{n}$, is the zero-one matrix $\mr{A}(G)\in\{0,1\}^{n\times n}$
whose $(i,j)$-entry is 1 if and only if $G$ has an edge between
$v_{i}$ and $v_{j}$.}. Due to the discrete nature of this question, the search for counterexamples
is much easier than for Kac's question: only one year later, Collatz
and Sinogowitz~\cite{CS57} exhibited a pair of graphs with the same spectrum.
This has some rather important practical consequences: if it were
the case that all graphs were determined by their spectrum, this would
give rise to a very simple graph isomorphism
test. It is an open problem to find a provably efficient graph isomorphism
test, and spectral information is often used to distinguish graphs
in practice.

A striking conjecture due to van Dam and Haemers~\cite{vDH03,vDH09,Hae16}
(also suggested somewhat later and seemingly independently by Vu~\cite{Vu21})
is that graphs which cannot be uniquely identified by their spectrum
are extremely rare, in the following natural asymptotic sense.
\begin{definition}
The \emph{spectrum} of a graph is the multiset of eigenvalues of its
adjacency matrix. A graph $G$ is \emph{determined by its spectrum
}(\emph{DS} for short) if there is no other graph (non-isomorphic
to $G$) which has the same spectrum as $G$.
\end{definition}

\begin{conjecture}
\label{conj:almost-all}The fraction of unlabelled $n$-vertex graphs
which are determined by their spectrum converges to $1$ as $n\to\infty$.
Equivalently\footnote{The number of \emph{labelled} graphs on a particular set of $n$ vertices
is $2^{n(n-1)/2}$, and it is well-known (see for example \cite[Lemma~2.3.2]{GR01})
that all but a vanishingly small fraction of these have a trivial automorphism
group.}, the number of (unlabelled) $n$-vertex graphs determined by their
spectrum is
\[
(1-o(1))\frac{2^{n(n-1)/2}}{n!}.
\]
\end{conjecture}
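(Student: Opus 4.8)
Since this final statement is the central open conjecture of the area rather than a result the paper will prove, I will describe the route I would attempt and flag exactly where it breaks down. Work with a uniformly random labelled graph $G=G(n,1/2)$; by the footnote almost all such $G$ are asymmetric, so it suffices to prove $\Pr[G\text{ is DS}]=1-o(1)$. The obvious first-moment approach --- union-bounding over all candidate cospectral mates $H$ --- is hopeless, since there are $2^{n(n-1)/2}$ graphs $H$ and no a priori reason a mate should resemble $G$. Any viable argument must therefore first extract strong structural constraints on a putative $H$ from the coincidence $\on{spec}(H)=\on{spec}(G)$, and then sum only over the surviving candidates.

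The standard engine for such constraints is the walk matrix $W(G)=\sqb{\one,\,A\one,\,A^{2}\one,\,\dots,\,A^{n-1}\one}$ and the generalized-spectrum theory of Wang and Xu. One first shows that $G$ is a.a.s.\ controllable ($\det W(G)\neq 0$), and then invokes Wang's arithmetic criterion: if $2^{-\lfloor n/2\rfloor}\det W(G)$ is an odd squarefree integer, then $G$ is determined by its \emph{generalized} spectrum (DGS), i.e.\ by the pair $\paren{\on{spec}(G),\on{spec}(\ol G)}$. To turn this into a statement about random $G$ one would study the integer $\det W(G)$ --- a polynomial in the independent edge indicators --- and its factorization modulo small primes, which amounts to an anti-concentration and divisibility problem. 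The first serious obstacle appears already here: a ``random'' integer is squarefree only with probability $6/\pi^{2}$, and even the refined $p$-adic versions of the criterion are, heuristically, indeterminate with constant probability at each prime. Thus the entire arithmetic route caps out at proving a \emph{positive constant} fraction of graphs to be DGS, and cannot by itself reach $1-o(1)$.

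The deeper obstacle is that being DGS is strictly weaker than being DS: a plain cospectral mate $H$ of $G$ need not be generalized-cospectral, since it may match $\on{spec}(G)$ while differing in $\on{spec}(\ol G)$ (equivalently, in the main angles), and Wang's criterion then gives no information about it. Proving the conjecture therefore requires ruling out, a.a.s., \emph{every} cospectral mate, including these ``non-generalized'' ones --- which the walk matrix alone does not see. I expect this DGS-to-DS bridge, together with the constant-fraction ceiling of the squarefree machinery, to be the main obstacles, and I do not know a mechanism that certifies from $G$ itself that no graph on its vertex set can reproduce its characteristic polynomial without also reproducing its walk structure. This is presumably exactly why the present paper proves only that there are exponentially many DS graphs rather than the full $1-o(1)$ statement; a proof of the conjecture seems to demand a genuinely new idea beyond the current spectral and arithmetic toolkit.
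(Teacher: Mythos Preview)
You have correctly recognised that this is the central \emph{open conjecture} of the paper, not a theorem that the paper proves; accordingly the paper contains no proof of this statement for your proposal to be compared against. Your write-up is not a proof attempt but a discussion of possible strategies and their obstructions, which is the appropriate response here.

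Your analysis is reasonable and well-informed. The two obstacles you isolate --- the constant-probability ceiling of the squarefree/arithmetic criterion for DGS, and the gap between DGS and DS --- are real and are essentially the reasons the paper settles for an exponential lower bound via an explicit construction rather than attacking the conjecture directly. The paper's own ``Further directions'' section makes a closely related point: with only about $2n$ combinatorially meaningful integers extractable from the spectrum, any constructive argument seems capped well below $\exp(n^{1+\varepsilon})$, and non-constructive or algebraic criteria (such as Wang's, which you mention) appear necessary for further progress. So your diagnosis aligns with the paper's own assessment, and there is nothing to correct.
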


\begin{remark}
To elaborate on the attribution here: for a very long time, it has been an important question in spectral graph theory to understand the asymptotic proportion of graphs which are DS (for example, Schwenk~\cite{Sch73} conjectured the \emph{opposite} of \cref{conj:almost-all} in 1973, and Godsil and McKay~\cite{GM82} described this general question as ``one of the outstanding unsolved problems in the theory of graph spectra''). The possibility that \cref{conj:almost-all} might hold (supported by mounting computational evidence) was first
suggested by van Dam and Haemers~\cite{vDH03} in 2003 (also later in \cite{vDH09}),
though they did not explicitly make a conjecture. It seems that \cref{conj:almost-all}
first appeared explicitly in a paper of Haemers~\cite{Hae16}. Vu
seems to have arrived at \cref{conj:almost-all} via quite a different
pathway: in \cite{Vu21} he presents it as a graph-theoretic variant
of a similar conjecture in random matrix theory. We also remark that
Garijo, Goodall and Ne\v set\v ril~\cite{GGN11} and Noy~\cite{Noy03}
situated \cref{conj:almost-all} in (different) general frameworks
which include a number of other questions about reconstructing graphs
from various types of information.
\end{remark}

\cref{conj:almost-all} is rather bold, on account of the fact that
there are very few known examples of DS graphs. Indeed, to show that
a graph $G$ is DS (without exhaustively computing the spectra of
all other graphs on the same number of vertices), it seems necessary
to somehow translate information about the spectrum of $G$ into information
about the combinatorial structure of $G$. Spectral graph theory has
a number of different tools along these lines, but all of them are
rather crude, and essentially all known examples of DS graphs have
very special structure. (For example, to prove that complete graphs
are DS, one uses the fact that the $n$-vertex complete graph is the
only $n$-vertex graph with exactly $\binom{n}{2}$ edges).

To the best of our knowledge, the best lower bounds on the number
of DS graphs are all of the form $e^{c\sqrt{n}}$ for some constant
$c>0$. Such a bound was first observed by van Dam and Haemers~\cite[Proposition~6]{vDH03},
who proved that $G$ is DS whenever every connected component of $G$
is a complete subgraph (the number of graphs of this form is precisely
the number of integer partitions of $n$, which is approximately $e^{c\sqrt{n}}$
for $c=\pi\sqrt{2/3}$ by the Hardy--Ramanujan theorem~\cite{HR18}).
Several other families of graphs, similarly enumerated by integer
partitions, have since been discovered (see for example \cite{ZB13,SHZ05}).
On the other hand, there has been much more progress in the \emph{opposite
direction} to \cref{conj:almost-all}, proving lower bounds on the
number of graphs which are \emph{not} DS. For example, a famous result
of Schwenk~\cite{Sch73} says that only a vanishingly small fraction
of trees are DS (meaning that almost all of the exponentially many
unlabelled $n$-vertex trees are non-DS), and, using an operation
that is now known as \emph{Godsil--McKay switching}, Godsil and McKay~\cite{GM82}
(see also \cite{HS04}) proved that the number of $n$-vertex graphs
which are not DS is at least
\[
(1-o(1))\frac{n^{2}}{12\cdot2^{n}}\cdot\frac{{\displaystyle 2^{\binom{n}{2}}}}{n!}.
\]
In this paper we prove the first exponential lower bound on the number
of DS graphs, finally breaking the ``$e^{c\sqrt{n}}$ barrier'' (and thereby answering a question of van Dam and Haemers~\cite{vDH03}).
\begin{theorem}
\label{thm:main}The number of (unlabelled) $n$-vertex graphs determined
by their spectrum is at least $e^{cn}$ for some constant $c>0$.
\end{theorem}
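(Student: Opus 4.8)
The plan is to construct an explicit family of $e^{cn}$ graphs on $n$ vertices, pairwise non-isomorphic, each of which we can prove is DS (this suffices, since distinct isomorphism types give distinct unlabelled graphs). The first point to confront is that the richest previously known families of DS graphs — disjoint unions of cliques and their relatives — are parametrised by integer partitions and hence have size only $e^{c\sqrt n}$, and this ceiling is intrinsic to \emph{disjoint} unions: a disjoint union records only the \emph{multiset} of its components (and, worse, disjoint-union spectra are notoriously fragile, e.g.\ $C_4\sqcup K_1$ and $K_{1,4}$ are cospectral). To obtain $\Theta(n)$ genuinely independent binary choices one should instead work with \emph{connected} graphs: fix a rigid ``skeleton'' — say a long path or cycle — single out $\Theta(n)$ pairwise-distant vertices along it, and at each such vertex make a binary choice, for instance attach a fixed small gadget or not. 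Distinct choice-patterns, modulo the (at most rotational or reflective) symmetry of the skeleton, give non-isomorphic graphs, so this family has size $e^{cn}$.

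Next one must show that the spectrum recovers the choice-pattern. The available levers are: the power sums $\operatorname{tr}(\mr{A}(G)^k)$, which for each fixed $k$ count closed $k$-walks and hence pin down local statistics (number of edges, triangles, $\sum_i d_i^2$, number of short cycles, and — if the gadget is chosen so that it produces a recognisable local configuration — the number of attachment points carrying a gadget); together with global spectral invariants such as bipartiteness, the largest and second-largest eigenvalues, the number of distinct eigenvalues, and the least eigenvalue. For the finer question of \emph{where} the gadgets sit, I would compute the characteristic polynomial of a decorated path by a transfer-matrix expansion, so that the choice-pattern is encoded in its coefficients, and then argue that this is recoverable from the spectrum in combination with the structural constraints below.

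The crux — and the step I expect to be genuinely hard — is \emph{rigidity}: ruling out ``exotic'' cospectral mates, i.e.\ showing that every graph $H$ cospectral with a member $G_x$ of our family is itself some $G_{x'}$ (after which the previous paragraph forces $x'=x$ up to symmetry). A priori $H$ could look nothing like $G_x$, and this is precisely the difficulty that has kept the problem open. I would try to design the skeleton and gadget so that a strong classification theorem applies to everything cospectral with $G_x$ — for instance arranging that such graphs have smallest eigenvalue at least $-2$, so that the Cameron--Goethals--Seidel--Shult description of these graphs (generalised line graphs, or exceptional graphs coming from root systems) is available, or more generally so that the spectral data forces $H$ to be ``thin'' and nearly tree-like with a prescribed local profile, and then enumerating all graphs with that profile and that spectrum. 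In particular one must verify that the family is immune to Godsil--McKay switching and the other standard cospectral-mate constructions — either the decorated skeletons contain no switching set, or switching maps the family to itself in a controlled way. Combining the construction, the walk-count bookkeeping, and this rigidity classification yields $e^{cn}$ pairwise non-isomorphic DS graphs, establishing \cref{thm:main}.
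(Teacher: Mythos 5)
Your proposal correctly anticipates the shape of the construction the paper uses (a long cycle decorated at $\Theta(n)$ well-separated positions with small pendant gadgets, giving $2^{\Theta(n)}$ non-isomorphic graphs), and two of the right tools (decoding the decoration pattern from closed-walk counts, and invoking the Cameron--Goethals--Seidel--Shult theorem for rigidity). But the proof is not there: the step you yourself flag as ``the crux'' --- showing every cospectral mate of $G_x$ is again a member of the family --- is exactly where essentially all of the work lies, and the route you sketch for it has a concrete obstruction. A cycle with pendant gadgets has vertices of degree $\ge 3$ and in general has adjacency eigenvalues well below $-2$ (already $K_{1,5}$ has least eigenvalue $-\sqrt 5$), so CGSS does not apply to the decorated skeletons themselves. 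To get into the $\lambda_{\min}\ge -2$ regime one must pass to \emph{line graphs} of the decorated skeletons, and then one faces a new and substantial problem your proposal does not engage with: a graph cospectral with $\operatorname{line}(G)$ need not be a line graph at all (CGSS only gives \emph{generalised} line graphs, up to bounded exceptions), and even if it is $\operatorname{line}(H)$, the adjacency spectrum of $\operatorname{line}(H)$ does not determine the number of vertices of $H$, so $H$ need not be cospectral with $G$ in any useful sense. The paper handles this with a determinant computation (to kill the zero eigenvalue that every non-line generalised line graph has), a divisibility-by-$4$ argument on the product of nonzero signless-Laplacian eigenvalues to force bipartiteness, and a primality condition on the vertex count (via Dirichlet's theorem) to pin down which component carries which prime factor --- none of which appears in your plan.

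A second missing idea is how to control connectivity and global structure of a cospectral mate at all. Transfer matrices and walk counts only help once you already know the mate is a decorated skeleton; for the raw adjacency spectrum you cannot even tell whether the mate is connected. The paper's resolution is a three-spectrum ladder: first prove the decorated skeletons are determined by their \emph{Laplacian} spectrum (where Kirchhoff's matrix-tree theorem yields connectivity and the cycle length, and degree power sums force the sun-like structure), then upgrade to the signless Laplacian, then to the adjacency spectrum of the line graph. The decoding step itself also needs more than ``transfer-matrix coefficients are recoverable'': the paper uses a parity argument (gadgets are chosen as one $1$-hub and many $2$-hubs precisely so that walk contributions from the $2$-hubs are divisible by higher powers of $2$, letting one inductively locate hubs by reducing spectral moments mod $8$). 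So while your intuition about the construction is good, the proposal as written leaves the determination argument essentially open.
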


\begin{remark}
Our proof shows that we can take $c=0.01$ for large $n$, but we
made no serious attempt to optimise this.
\end{remark}

We will outline our proof strategy in \cref{sec:overview}, but to
give a quick impression: we consider an explicit family of ``nice
graphs'', each consisting of a long cycle with leaves attached
in various carefully-chosen ways. Then, we consider a family of $n$-vertex
graphs $\mathcal{Q}_n$ obtained by combining complete graphs with \emph{line
graphs}\footnote{The \emph{line graph} $\on{line}(G)$ of a graph $G$ has a vertex for each
edge of $G$, and two vertices in $\on{line}(G)$ are adjacent if the corresponding
edges of $G$ share a vertex.} of nice graphs, in such a way that certain inequalities and number-theoretic
properties are satisfied. We then prove that there are exponentially
many graphs in $\mathcal{Q}_n$, and that all graphs in $\mathcal{Q}_n$
are determined by their spectrum. We remark that there is an essential
tension in the choice of $\mathcal{Q}_n$: in order to prove a strong
lower bound we would like our families of graphs to be as ``rich''
as possible, containing graphs with a wide variety of structure, but
in order to reconstruct a graph using the limited information that
is (legibly) available in its spectrum, we can only work with graphs
with very special structure.

\subsection{Further directions}

It seems that significant new ideas would be required to go beyond
the exponential bound in \cref{thm:main}. Indeed, if we consider all
the known combinatorial parameters that can be extracted from
the spectrum of an $n$-vertex graph, then we end up with a list of
about $2n$ integers (most notably, the first $n$ spectral moments
describe the number of closed walks of each length, and the $n$ non-leading
coefficients of the characteristic polynomial can be interpreted as
certain weighted sums of subgraph counts). In order to use this combinatorial
information to reconstruct say $\exp(n^{1+\varepsilon})$ different
graphs, we would need to use a huge amount of information from each of the
integers in our list: roughly speaking, the variation in each integer
must correspond to about $\exp(n^{\varepsilon})$ different graphs.
It is hard to imagine a natural combinatorial argument that could
reconstruct so many different graphs from a single integer of information.

Instead, it seems that \emph{non-constructive} methods may
be necessary in order to prove \cref{conj:almost-all}, or even to
make much progress beyond \cref{thm:main}. Is there some algebraic
criterion which describes whether a graph is DS, without necessarily
providing a combinatorial procedure to reconstruct the graph\footnote{Some progress in this direction was made by Wang~\cite{Wan17}, who found an arithmetic criterion for a graph to be determined by its so-called ``generalised spectrum''.}? Can
one somehow show that the DS property is ``generic'' without describing
\emph{which} graphs are DS?

We would also like to propose a number of other questions related
to \cref{conj:almost-all}.
\begin{itemize}
\item Consider two different $n$-vertex graphs $G,G'$, chosen uniformly
at random, and let $Q_{n}$ be the probability that $G$ and $G'$
have the same spectrum. How large is this probability? It seems one can obtain an exponential upper bound
\[
Q_{n}\le\Pr[\det(G)=\det(G')]\le\sup_{d\in\mb R}\Pr[\det(G)=d]\le e^{-cn}
\]
for some $c>0$, using powerful techniques in random matrix theory
(see \cite{CJMS21}).
\item\medskip \cref{conj:almost-all} is equivalent to the statement
that among all $n$-vertex graphs, there are 
\[
(1-o(1))\frac{{\displaystyle 2^{\binom{n}{2}}}}{n!}
\]
different spectra. What lower bounds can we prove on the number of
different spectra realisable by $n$-vertex graphs? There are several different ways to prove an exponential lower bound: in particular,
such a bound follows from \cref{thm:main}, from the above bound $Q_{n}\le e^{-cn}$, or from results on the range of possible determinants of $n\times n$ binary matrices (see \cite{Sha22}).
%and we know of nothing better. %We remark that this question is related to the \emph{determinant spectrum} of $\{0,1\}$ matrices
\item\medskip Although it is known~\cite{Sch73} that almost all trees are \emph{not
}DS, it would still be interesting to prove lower bounds on the number
of DS trees. Could it be that there are exponentially many?
% \item\medskip \cref{conj:almost-all} can be equivalently stated as the conjecture
% that an Erd\H os--R\'enyi graph $\mathbb{G}(n,1/2)$ (with $n$
% vertices and edge-sampling probability $1/2$) is DS with probability
% $1-o(1)$, and it also makes sense to ask about $\mathbb{G}(n,p_{n})$
% for general edge-sampling probabilities $p_{n}$. Using the fact that
% almost no trees are DS, is easy to show that if $p(1-p)\le(1-\varepsilon)\log n/n$
% (for any constant $\varepsilon>0$) then $G\sim\mathbb{G}(n,p_{n})$
% is DS with probability only $o(1)$, and we wonder whether there is
% a \emph{phase transition} around $\log n/n$: generalising \cref{conj:almost-all},
% could it be true that if $p(1-p)\ge(1-\varepsilon)\log n/n$, then
% $G\sim\mathbb{G}(n,p_{n})$ is DS with probability $1-o(1)$? More
% generally we can consider random perturbations of the form $H_{p}=H\triangle G$,
% where we take the symmetric difference beteen a fixed $n$-vertex
% graph $H$ and a random graph $G\sim\mathbb{G}(n,p)$ on the same
% vertex set.
\item\medskip In the continuous setting (``hearing the shape of a drum''), the
\emph{spectral rigidity} conjecture of Sarnak (see \cite{Sar90})
suggests that despite the fact that there are drums with the same
spectrum, such drums are always ``isolated'' from each other: for
any drum, making a sufficiently small change to the shape of the
drum always changes its spectrum. One can also ask similar questions
for graphs. For example, as a weakening of \cref{conj:almost-all},
we conjecture that for a $(1-o(1))$-fraction of labelled graphs on $n$ vertices, any
nontrivial addition/deletion of at most $(1/2-\varepsilon)n$ edges
(for any constant $\varepsilon>0$) results in a graph with a different
spectrum. If this were true it would be best-possible: for almost
all $n$-vertex graphs $G$, one can exchange the roles of two vertices
by adding and removing about $n/2$ edges (obtaining a graph
which is isomorphic to $G$ and therefore has the same spectrum).
\item\medskip Apart from the adjacency matrix, there are several other matrices
which can be associated with a graph. Perhaps the best-known examples
are the \emph{Laplacian} matrix and the \emph{signless Laplacian} matrix
(which are both actually used in this paper; see \cref{def:laplacian}). Such matrices give us different notions of graph spectra,
with which we can ask variations on all the questions discussed so
far. Actually, the Laplacian analogue of \cref{thm:main} has already been proved, taking advantage of the fact that the Laplacian spectrum is much better-behaved
with respect to \emph{complements}:
Hammer and Kelmans~\cite{HK96} showed that all $2^{n}$ of the \emph{threshold
graphs} on $n$ vertices (i.e., all $n$-vertex graphs which can be constructed from the empty
graph by iteratively adding isolated vertices and taking complements)
are determined by their Laplacian spectrum. In the course of proving \cref{thm:main}, we actually end up giving new proofs of the analogous result for Laplacian and signless Laplacian spectra. It is still open (and not obviously easier or harder than for the adjacency
spectrum) to prove better-than-exponential lower bounds on the number
of $n$-vertex graphs determined by their Laplacian or signless Laplacian spectrum.
\end{itemize}

\section{Proof overview}\label{sec:overview}

We start by defining the \emph{Laplacian matrix} and the \emph{signless
Laplacian matrix}, two variations on the adjacency matrix.
\begin{definition}\label{def:laplacian}
Consider a (simple) graph $G$ with vertices $v_{1},\dots,v_{n}$.
Let $\mr{D}(G)$ be the diagonal matrix whose $(i,i)$-entry is the degree
of $v_{i}$, and recall the adjacency matrix $\mr{A}(G)$ of $G$.
\begin{itemize}
\item The \emph{Laplacian matrix} is defined as $\mr{L}(G)=\mr{D}(G)-\mr{A}(G)$.
\item The \emph{signless Laplacian matrix} is defined as $|\mr{L}(G)|=\mr{D}(G)+\mr{A}(G)$.
\end{itemize}
We sometimes refer to the spectra of $\mr{A}(G)$, $\mr{L}(G)$ and $|\mr{L}(G)|$
as the \emph{adjacency spectrum}, \emph{Laplacian spectrum} and \emph{signless
Laplacian spectrum} of $G$, respectively. We say that a graph $G$
is \emph{determined by its Laplacian spectrum} (respectively, \emph{determined
by its signless Laplacian spectrum}) if there is no other graph (non-isomorphic
to $G$) which has the same Laplacian spectrum (respectively, signless
Laplacian spectrum) as $G$.
\end{definition}

While the adjacency matrix is the simplest and most natural way to
associate a matrix to a graph, all three of the above notions of spectrum
contain slightly different information about $G$, which can be useful
for different purposes. For this paper, the crucial fact about the
Laplacian spectrum is that it determines the number of \emph{spanning
trees} of a graph, via Kirchhoff's celebrated \emph{matrix-tree theorem}
(\cref{thm:kirchhoff}). In particular,
the Laplacian spectrum tells us whether a graph is connected or not.
%; this is information which is not in general available from the adjacency
%spectrum.

Fortunately, there are some connections between the above three notions
of spectrum, which we will heavily rely on in this paper. For example,
two simple observations are that:
\begin{itemize}
\item if a graph is bipartite, then its signless Laplacian spectrum is the
same as its Laplacian spectrum (\cref{fact:bipartite-L-SL});
\item if two graphs have the same signless Laplacian spectrum, then their
\emph{line graphs} have the same adjacency spectrum (\cref{prop:A-SL-line-graph}).
\end{itemize}
Unfortunately, there are some limitations to these connections. In
general, neither the Laplacian spectrum nor the signless Laplacian
spectrum of a graph contain enough information to actually determine
whether the graph is bipartite (and it is \emph{not} true that for
a bipartite graph to be determined by its Laplacian spectrum is the
same as for it to be determined by its signless Laplacian spectrum).
Also, if a graph $Q$ has the same adjacency spectrum as the line
graph of some graph $G$, it does not necessarily follow that $Q$
is the line graph of some graph with the same signless Laplacian spectrum
as $G$ (it does not even follow that $Q$ is a line graph at all,
though a deep structure theorem of Cameron, Goethals, Seidel and Shult~\cite{CGSS76}, building on a previous slightly weaker theorem of Hoffman~\cite{Hof77}, shows that every connected graph which has the same adjacency spectrum as a
line graph must be a so-called \emph{generalised line graph}, with
finitely many exceptions).

Despite these limitations, in our proof of \cref{thm:main} it is nonetheless
extremely useful to move between the three different notions of graph
spectra. Roughly speaking, our proof of \cref{thm:main} can be broken
down into three parts. First, we describe an explicit family of graphs
(``nice graphs''), and prove that they are determined by their Laplacian spectrum (making crucial use of the matrix-tree theorem). Second,
we prove that any graph which has the same signless Laplacian spectrum
as a bipartite nice graph must be bipartite (from which we can deduce
that in fact every bipartite nice graph is determined by its signless
Laplacian spectrum). Finally, we define a family $\mathcal{Q}_n$ of
exponentially many $n$-vertex graphs (which are essentially line
graphs of bipartite nice graphs, with some small adjustments for number-theoretic
reasons), and use the Cameron--Goethals--Seidel--Shult theorem
to show that if a graph has the same adjacency spectrum as a graph in $\mathcal{Q}_n$, then both graphs must have been constructed from line graphs with
the same signless Laplacian spectrum. Putting everything together,
we see that all of the exponentially many graphs in $\mathcal{Q}_n$
are determined by their adjacency spectrum.

We next outline each of the above three parts of the proof of \cref{thm:main} in more detail.

\subsection{Nice graphs and the Laplacian spectrum}\label{subsec:LDS}
First, we define nice graphs and outline how to prove that they are determined by their Laplacian spectrum.
\begin{definition}\label{def:nice}
Say that a graph is \emph{sun-like}\footnote{The reason for this terminology is that the name ``sun graph'' is sometimes used in the literature to describe a graph obtained from a cycle by adding a leaf to each vertex.} if it is connected, and deleting
all degree-1 vertices yields a cycle. Equivalently, a sun-like graph
can be constructed by taking a cycle $C$, and attaching some leaves to some vertices of $C$. If a vertex of $C$ has $i$ leaves
attached to it (equivalently, if the vertex has degree $i+2$), we
call it an \emph{$i$-hub}. We simply call a vertex a \emph{hub} if
it is an $i$-hub for some $i\ge1$ (equivalently, if its degree is
at least 3).

For (integer) parameters $k\ge 1$ and $\ell\ge \max(12k,15)$, say that a graph $G$ is \emph{$(\ell,k)$-nice} if:
\begin{itemize}
\item $G$ is a sun-like graph;
\item the unique cycle $C$ in $G$ has length $\ell$;
\item there are exactly $k+1$ hubs, one of which is a 1-hub and the others
of which are 2-hubs;
\item we can fix an orientation of $C$ such that the following holds. Imagine starting at the 1-hub and walking clockwise around $C$. We should meet our first 2-hub after walking a distance of 4. Then, the second 2-hub should appear at distance 4 or 6 after the first. The third 2-hub should appear at distance 4 or 6 after the second, the fourth should appear at distance 4 or 6 after the third, and so on. (This freedom between 4 and 6 at each step is crucial; it ensures that there are many different nice graphs).
%fixing an orientation of $C$, every hub has another hub at distance
% 4 or 6 to its left, and at distance 4 or 6 to its right, except for
% the unique 1-hub, which has a single 2-hub at distance 4, and one of the 2-hubs (that is to say, we can start
% at the 1-hub and walk along the cycle, meeting a new hub every 4 or
% 6 steps until we have visited all the hubs).
\end{itemize}
See \cref{fig:nice} for an illustration of a $(46,3)$-nice graph. We simply say that a graph is \emph{nice} if it is $(\ell,k)$-nice for some $k,\ell$ (satisfying $k\ge 1$ and $\ell\ge \max(12k,15)$). We remark that the restriction $\ell\ge 12k$ is to ensure that all 2-hubs are closer to the 1-hub in the clockwise direction than the counterclockwise direction.
\end{definition}

\begin{lemma}
\label{lem:nice-LDS}Every nice graph is determined by its Laplacian
spectrum.
\end{lemma}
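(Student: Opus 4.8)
The plan is to extract enough combinatorial information about a nice graph $G$ from its Laplacian spectrum to pin down $G$ up to isomorphism. Suppose $H$ has the same Laplacian spectrum as an $(\ell,k)$-nice graph $G$. The first step is to read off the easy spectral invariants: the number of vertices $n = |V(G)|$ (the size of the spectrum), the number of edges $m$ (half the trace of $\mr L$, i.e. half the sum of the eigenvalues), and — via the matrix-tree theorem (\cref{thm:kirchhoff}) — the number of spanning trees $\tau(G)$. Since $G$ is sun-like, it is connected and unicyclic, so $m = n$ and $\tau(G) = \ell$ (the length of the unique cycle). Conversely $H$ must also be connected (the Laplacian spectrum detects connectivity, as remarked after \cref{def:laplacian}) with $m = n$ edges, hence $H$ is also connected and unicyclic, and its unique cycle has length $\tau(H) = \ell$. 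So already $H$ is sun-like with cycle length exactly $\ell$, and it has the same number of leaves (namely $n - \ell$) and the same number of non-leaf vertices as $G$.

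The second step is to recover the degree sequence, which will force $H$ to have the same hub structure as $G$. The degree sequence of a graph is read off from the first few Laplacian spectral moments: $\sum_i \lambda_i = \sum_v \deg(v) = 2m$ gives the number of edges, $\sum_i \lambda_i^2 = \operatorname{tr}(\mr L^2) = \sum_v \deg(v)^2 + 2m$ gives $\sum_v \deg(v)^2$, and $\sum_i \lambda_i^3 = \operatorname{tr}(\mr L^3) = \sum_v \deg(v)^3 + 3\sum_v \deg(v)^2 - 6\cdot(\text{number of triangles})$ (using $\ell \ge 15 > 3$, so there are no triangles) gives $\sum_v \deg(v)^3$. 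Since in a nice graph every vertex has degree $1$, $2$, $3$, or $4$, and the counts of these are determined by $n_1 + n_2 + n_3 + n_4 = n$, $n_1 + 2n_2 + 3n_3 + 4n_4 = 2m$, $n_1 + 4n_2 + 9n_3 + 16n_4 = \sum \deg^2$, $n_1 + 8n_2 + 27n_3 + 64n_4 = \sum \deg^3$ — four independent linear equations in four unknowns — the degree sequence of $G$ is determined by its Laplacian spectrum. In a nice graph there are exactly $k$ vertices of degree $4$ (the $2$-hubs), one of degree $3$ (the $1$-hub), $n-\ell$ of degree $1$ (the leaves), and $\ell - k - 1$ of degree $2$. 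Hence $H$ is a sun-like graph with cycle length $\ell$, exactly $k$ many $2$-hubs, exactly one $1$-hub, and no $i$-hubs for $i \ge 3$: that is, $H$ satisfies the first three bullets of \cref{def:nice}.

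The third and genuinely delicate step is to recover the cyclic \emph{gap pattern} — the sequence of distances $4$ or $6$ between consecutive hubs around $C$ — since two nice graphs with different gap patterns are non-isomorphic, and nothing so far distinguishes them. Here I would use more refined spectral information: either higher Laplacian spectral moments $\operatorname{tr}(\mr L^t)$, which count closed walks of length $t$ in a weighted sense and are sensitive to how close together the hubs are, or the non-leading coefficients of the Laplacian characteristic polynomial, which by the all-minors matrix-tree theorem encode counts of spanning forests with prescribed numbers of components. The key point is that since $\ell \ge 12k$, all $2$-hubs lie in a clockwise arc of length at most $6k < \ell/2$ from the $1$-hub (this is exactly why the definition imposes $\ell \ge 12k$), so the ``hub cluster'' occupies a short, unambiguous segment of the long cycle and the combinatorial data of the gaps is localized; we are not fighting against the symmetry of the bare cycle. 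I expect the main obstacle to be showing that the chosen spectral invariant (say, a suitable collection of spanning-forest counts or closed-walk counts) genuinely separates all admissible gap patterns — this likely requires an explicit computation showing that the relevant generating polynomial in the gap variables is injective on $\{4,6\}^k$, exploiting that $4$ and $6$ are ``far apart'' relative to the localization scale. Once the gap pattern, the cycle length, and the total leaf count are all recovered, $H$ must be isomorphic to $G$ (the only remaining freedom, distributing the $n - \ell - (\text{leaves forced by hubs})$ extra leaves, is nil because a nice graph has exactly the leaves dictated by its hubs: $2k+1$ of them, so $n = \ell + 2k + 1$ is itself forced), completing the proof that $G$ is determined by its Laplacian spectrum.
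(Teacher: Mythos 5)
Your proposal correctly identifies the overall architecture (recover global invariants, then the degree sequence, then the hub positions), but it has gaps at every stage, and the crucial final stage is not actually proved.

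First, two issues in the ``easy'' steps. You assert that $H$, being connected and unicyclic with cycle length $\ell$, ``is already sun-like'' --- this does not follow: the $n-\ell$ off-cycle vertices could form pendant trees of depth greater than one. Relatedly, your four-equations-in-four-unknowns recovery of the degree sequence silently assumes that $H$ has no vertex of degree $\ge 5$, which is a statement about $H$, not about $G$, and needs justification. The paper handles both points together: \cref{lem:L-degree-inequality} gives $\Delta(H)<\rho_{\mr{max}}\le\max\{\deg(u)+\deg(v)\}\le 6$, so one must work with \emph{five} unknowns $n_1,\dots,n_5$ and only four moment equations, leaving a one-parameter family of solutions; the free parameter is then killed by observing that the solutions force $n_2+n_3+n_4+n_5=\ell-n_5$ while the $\ell$-cycle already contains $\ell$ vertices of degree $\ge 2$, whence $n_5=0$ and every vertex of degree $\ge 2$ lies on the cycle --- which is exactly the statement that $H$ is sun-like. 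Without some version of this argument your second step does not go through.

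Second, and more seriously, your third step is a plan rather than a proof: you name the candidate invariants (higher moments or characteristic coefficients) and explicitly flag that ``the main obstacle'' is showing they separate all gap patterns in $\{4,6\}^k$, but you do not resolve that obstacle. This is the heart of the lemma. Note also that before decoding the gap pattern one must first show that $H$'s consecutive hub distances are in fact $4$ or $6$ with the $1$-hub at one end (i.e.\ that $H$ is nice at all); matching degree sequences does not give this. The paper does both via a ``decorated routes'' interpretation of the Laplacian moments: moments of order $10,12,14$ localise $H$ to a nice graph, and then an induction on the distance $2q$ from the $1$-hub uses the moment of order $4q+2$ together with a divisibility argument --- routes interacting only with $2$-hubs contribute weight divisible by $8$, while the routes witnessing a hub at distance $2q$ from the $1$-hub contribute exactly $8q+4\not\equiv 0\pmod 8$ --- so reducing mod $8$ detects whether a hub sits at distance $2q$. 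Some argument of this concreteness (exploiting the parity distinction between $1$-hubs and $2$-hubs) is needed; a generic appeal to ``injectivity of a generating polynomial on $\{4,6\}^k$'' is not established and is not obviously true for an arbitrary choice of invariant.
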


We will prove \cref{lem:nice-LDS} in full detail in \cref{sec:laplacian}. As a brief outline: the first step in the proof of \cref{lem:nice-LDS} is to prove that
any graph $G'$ with the same Laplacian spectrum as a nice graph $G$
is itself nice (with the same parameters $\ell,k$). This ``localises''
the problem: if we only have to consider nice graphs, we can give
a much more explicit combinatorial meaning to certain spectral statistics
(most crucially, we can give a combinatorial interpretation of the
Laplacian spectral moments\footnote{For the purposes of this paper, the $k$-th spectral moment of a matrix
$M$ is its sum of $k$-th powers of eigenvalues (this can also be
expressed as the trace of the matrix power $M^{k}$).} in terms of closed walks around the unique cycle $C$). This localisation
step crucially uses the matrix-tree theorem to show that $G'$ is
connected (once we know that $G'$ is connected, certain spectral
inequalities on various degree statistics allow us to deduce that
$G'$ has a single cycle, then that it is sun-like and then that it is
nice). We remark that similar ideas were previously used by Boulet~\cite{Bou09} to prove that so-called ``sun graphs'' are determined by their Laplacian spectrum.

After localising the problem, the second step is to show how to ``decode''
a specific nice graph using spectral information: i.e., assuming that
$G'$ is nice, we use spectral information to discover which nice
graph it is. The idea for this step is to ``inductively explore the
graph around its 1-hub'' using spectral moments: assuming we know
the positions of all the 2-hubs up to distance $d$ of the 1-hub,
we can use the $(2d+2)$-th spectral moment to see whether there is
a 2-hub at distance $d+1$ from the 1-hub. Very roughly speaking,
the reason this is possible is that the spectral moments can be interpreted
as certain weighted sums over closed walks on $C$. If a closed walk
``interacts with 2-hubs'' $i$ times, then the weight of the walk
is divisible by 2, so parity considerations allow us to distinguish
closed walks involving the 1-hub from closed walks which only involve
2-hubs.

\begin{remark}
    For this ``decoding'' step, there is no advantage of the Laplacian spectrum over the adjacency spectrum. In fact, it would have been much more convenient to work with the adjacency spectrum, as the spectral moments of the adjacency matrix have a much more direct combinatorial interpretation than the spectral moments of the Laplacian matrix. Indeed, the $i$-th spectral moment of the adjacency matrix simply counts the number of closed walks of length $i$. For a nice graph, every nontrivial closed walk can be obtained by starting with a closed walk in the unique cycle $C$, and then choosing some hubs in the walk at which we go in and out of a leaf. Every time we go in and out of a leaf at a 2-hub, we have an even number of choices, whereas every time we go in and out of a leaf at a 1-hub, we have an odd number of choices.
\end{remark}

\begin{remark}
There are some parallels between our 2-step strategy to prove \cref{lem:nice-LDS} and a similar 2-step strategy that was recently applied with great success in the \emph{continuous} case (i.e., in the ``hearing the shape of a drum'' setting). Indeed, a recent breakthrough result of Hezari and Zelditch~\cite{HZ22} is that ellipses with low eccentricity are determined by their spectrum. In their proof, the first step is to use certain spectral inequalities to ``localise'' the problem, showing that any domain whose spectrum matches a low-eccentricity ellipse must be ``almost circular''. Then, the second step is to pin down the precise shape of the domain, taking advantage of the fact that the spectrum determines certain information about \emph{closed billiard trajectories} inside the domain, and applying powerful results due to Avila, De Simoi and Kaloshin~\cite{ADK16} to study such trajectories. There is some similarity between closed walks in graphs and closed billiard trajectories in a domain; it is not clear to us whether this connection runs deeper.
\end{remark}

\begin{figure}
\centering
% \begin{tikzpicture}[scale=0.7]
% \node [vertex] (0) at (-14, 0) {};
% \node [vertex, label=right:$v_0$] (1) at (-13, 0) {};
% \node [vertex] (2) at (-12, 1) {};
% \node [vertex] (3) at (-11, 1) {};
% \node [vertex] (4) at (-10, 1) {};
% \node [vertex, label=below:$v_1$] (5) at (-9, 1) {};
% \node [vertex] (6) at (-8, 1) {};
% \node [vertex] (7) at (-7, 1) {};
% \node [vertex] (8) at (-6, 1) {};
% \node [vertex] (9) at (-5, 1) {};
% \node [vertex] (10) at (-4, 1) {};
% \node [vertex,label=below:$v_2$] (11) at (-3, 1) {};
% \node [vertex] (12) at (-2, 1) {};
% \node [vertex] (13) at (-1, 1) {};
% \node [vertex] (14) at (0, 0) {};
% \node [vertex] (15) at (-1, -1) {};
% \node [vertex] (16) at (-2, -1) {};
% \node [vertex] (17) at (-3, -1) {};
% \node [vertex] (18) at (-4, -1) {};
% \node [vertex] (19) at (-5, -1) {};
% \node [vertex] (20) at (-6, -1) {};
% \node [vertex] (21) at (-7, -1) {};
% \node [vertex] (22) at (-8, -1) {};
% \node [vertex] (23) at (-9, -1) {};
% \node [vertex] (24) at (-10, -1) {};
% \node [vertex] (25) at (-11, -1) {};
% \node [vertex] (26) at (-12, -1) {};
% \node [vertex] (27) at (-9.5, 2) {};
% \node [vertex] (28) at (-8.5, 2) {};
% \node [vertex] (29) at (-3.5, 2) {};
% \node [vertex] (30) at (-2.5, 2) {};
% %
% \draw (0) -- (1) -- (2) -- (13) -- (14) -- (15) -- (26) -- (1);
% \draw (27) -- (5) -- (28);
% \draw (29) -- (11) -- (30);
% \end{tikzpicture}
% 
\begin{tikzpicture}[scale=0.6]
\node [vertex] (0) at (-14, 0) {};
\node [vertex, label=right:$v_0$] (1) at (-13, 0) {};
\node [vertex] (2) at (-12, 1) {};
\node [vertex] (3) at (-11, 1) {};
\node [vertex] (4) at (-10, 1) {};
\node [vertex, label=below:$v_1$] (5) at (-9, 1) {};
\node [vertex] (6) at (-8, 1) {};
\node [vertex] (7) at (-7, 1) {};
\node [vertex] (8) at (-6, 1) {};
\node [vertex] (9) at (-5, 1) {};
\node [vertex] (10) at (-4, 1) {};
\node [vertex,label=below:$v_2$] (11) at (-3, 1) {};
\node [vertex] (12) at (-2, 1) {};
\node [vertex] at (-1, 1) {};
\node [vertex] at (0, 1) {};
\node [vertex,label=below:$v_3$] (x)at (1, 1) {};
\node [vertex] (x1)at (0.5, 2) {};
\node [vertex] (x2)at (1.5, 2) {};
\node [vertex] at (2, 1) {};
\node [vertex] at (-1, -1) {};
\node [vertex] at (0, -1) {};
\node [vertex] at (1, -1) {};
\node [vertex] at (2, -1) {};
\node [vertex] (13) at (9,1) {};
\node [vertex] (14) at (10,0) {};
\node [vertex] (15) at (9,-1) {};

\node [vertex] at (3,1) {};
\node [vertex] at (4,1) {};
\node [vertex] at (5,1) {};
\node [vertex] at (6,1) {};
\node [vertex] at (7,1) {};
\node [vertex] at (8,1) {};

\node [vertex] at (3,-1) {};
\node [vertex] at (4,-1) {};
\node [vertex] at (5,-1) {};
\node [vertex] at (6,-1) {};
\node [vertex] at (7,-1) {};
\node [vertex] at (8,-1) {};

\node [vertex] (16) at (-2, -1) {};
\node [vertex] (17) at (-3, -1) {};
\node [vertex] (18) at (-4, -1) {};
\node [vertex] (19) at (-5, -1) {};
\node [vertex] (20) at (-6, -1) {};
\node [vertex] (21) at (-7, -1) {};
\node [vertex] (22) at (-8, -1) {};
\node [vertex] (23) at (-9, -1) {};
\node [vertex] (24) at (-10, -1) {};
\node [vertex] (25) at (-11, -1) {};
\node [vertex] (26) at (-12, -1) {};
\node [vertex] (27) at (-9.5, 2) {};
\node [vertex] (28) at (-8.5, 2) {};
\node [vertex] (29) at (-3.5, 2) {};
\node [vertex] (30) at (-2.5, 2) {};
\draw (0) -- (1) -- (2) -- (13) -- (14) -- (15) -- (26) -- (1);
\draw (27) -- (5) -- (28);
\draw (29) -- (11) -- (30);
\draw (x1) -- (x) -- (x2);
\end{tikzpicture}

\caption{\label{fig:nice}An example of a $(46,3)$-nice graph. There is one 1-hub $v_0$, and three 2-hubs $v_1,v_2,v_3$. The distances between $v_0$ and $v_1$, between $v_1$ and $v_2$ and between $v_2$ and $v_3$ are 4, 6 and 4, respectively.}
\end{figure}
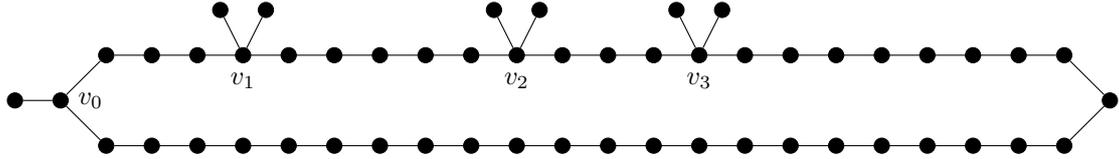

\subsection{The signless Laplacian spectrum}\label{subsec:SLDS}
As outlined, the next step is to prove an analogue of \cref{lem:nice-LDS} for the signless Laplacian spectrum: we are able to do this with a mild condition on the length of the cycle $\ell$, as follows.

\begin{lemma}\label{lem:nice-SLDS}
    Let $G$ be an $(\ell,k)$-nice graph with $\ell\equiv 2\pmod 4$. Then, $G$ is determined by its signless Laplacian spectrum.
\end{lemma}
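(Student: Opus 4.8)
The plan is to leverage the two facts highlighted in the overview: a bipartite graph has identical Laplacian and signless Laplacian spectra (Fact~\ref{fact:bipartite-L-SL}), and \cref{lem:nice-LDS} already tells us that nice graphs are determined by their Laplacian spectrum. The key observation is that an $(\ell,k)$-nice graph $G$ with $\ell\equiv 2\pmod 4$ is bipartite: the unique cycle $C$ has even length $\ell$, and every other edge lies on a pendant path (in fact a single leaf), so $G$ contains no odd cycle. Hence the signless Laplacian spectrum of $G$ equals its Laplacian spectrum. Now suppose $G'$ is any graph with the same signless Laplacian spectrum as $G$. To conclude that $G'\cong G$ it suffices, by \cref{lem:nice-LDS} together with the previous sentence, to show that $G'$ is bipartite: once we know $G'$ is bipartite, its signless Laplacian spectrum coincides with its Laplacian spectrum, which then equals the Laplacian spectrum of $G$, and \cref{lem:nice-LDS} finishes the job.

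So the heart of the matter is: \emph{any graph $G'$ cospectral with a bipartite nice graph $G$ with respect to the signless Laplacian must itself be bipartite.} The natural tool here is the classical fact that the least eigenvalue of the signless Laplacian $|\mr L(G')|$ of a connected graph $G'$ is $0$ if and only if $G'$ is bipartite (and more generally, the multiplicity of the eigenvalue $0$ of $|\mr L(G')|$ equals the number of bipartite connected components). Since $0$ is an eigenvalue of $|\mr L(G)|=\mr L(G)$ (the Laplacian always has $0$ as an eigenvalue, with multiplicity equal to the number of components, and $G$ is connected so the multiplicity is $1$), the same must be true of $|\mr L(G')|$: it has $0$ as an eigenvalue with multiplicity exactly $1$. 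The multiplicity-$1$ part tells us $G'$ has exactly one bipartite component; the fact that $0$ appears at all, combined with nonnegativity of all eigenvalues of a signless Laplacian, needs a little more care to upgrade to ``$G'$ is connected and bipartite''. For this I would first argue $G'$ is connected: the number of connected components of $G'$ can be read off from the signless Laplacian spectrum only indirectly, so instead I would reuse the connectivity argument from the proof of \cref{lem:nice-LDS} — the signless Laplacian spectral moments (traces of powers) match those of $G$, which pins down $|E(G')|$ and the degree sequence statistics enough to force $G'$ to have the right number of vertices and edges; then the single bipartite component plus an argument that all components must be bipartite-or-not in a way constrained by the spectrum forces connectivity. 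Actually the cleanest route: if $G'$ had a non-bipartite component, that component contributes no $0$ eigenvalue and has smallest signless-Laplacian eigenvalue bounded below by a positive constant depending on its structure, while bipartite components contribute exactly one $0$ each; matching the multiplicity of $0$ (which is $1$) shows $G'$ has exactly one bipartite component and possibly several non-bipartite ones. To rule out the non-bipartite components, I would compare with the known edge count and vertex count (from the two smallest spectral moments, i.e. $\sum \lambda_i = 2|E|$ and the trace relations) to force $G'$ to be connected, hence $G'$ is connected and bipartite.

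The main obstacle, then, is the connectivity step: showing that a graph cospectral (for the signless Laplacian) with a bipartite nice graph cannot be disconnected. I expect this to parallel, but not be identical to, the corresponding step in \cref{lem:nice-LDS}, where the matrix-tree theorem gave connectivity for free via the Laplacian. For the signless Laplacian there is no matrix-tree theorem, so I would instead combine the eigenvalue-$0$ multiplicity argument above (which already forces \emph{at most one} bipartite component) with crude inequalities: sum and sum-of-squares of the signless Laplacian eigenvalues give $\sum_i d_i = 2|E(G')|$ and $\sum_i d_i(d_i+1) + \text{(something)} = \operatorname{tr}|\mr L(G')|^2$, from which one extracts enough control on $|V(G')|$, $|E(G')|$ and the degree sequence to conclude that $G'$ cannot be split into several pieces without violating the matching of a low-order spectral moment. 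Once connectivity is in hand the argument is short. A secondary, more bookkeeping obstacle is verifying the parity hypothesis $\ell \equiv 2 \pmod 4$ is really only used to guarantee bipartiteness (even $\ell$ suffices for that) — I suspect the stronger congruence is imposed for compatibility with later number-theoretic constraints in the construction of $\mathcal Q_n$ rather than being needed in this lemma, and I would remark on that. I would then assemble: $G$ bipartite $\Rightarrow$ signless Laplacian spectrum $=$ Laplacian spectrum; $G'$ cospectral $\Rightarrow$ $G'$ connected and bipartite $\Rightarrow$ $G'$ has Laplacian spectrum equal to that of $G$ $\Rightarrow$ by \cref{lem:nice-LDS}, $G'\cong G$. $\qed$
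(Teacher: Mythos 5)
Your overall skeleton is the same as the paper's: reduce everything to showing that a graph $H$ which is signless-Laplacian cospectral with $G$ must be bipartite, then invoke \cref{fact:bipartite-L-SL} and \cref{lem:nice-LDS}. But the one step that carries real content --- establishing bipartiteness of $H$ --- is exactly where your argument has a genuine gap. Your route via the multiplicity of the zero eigenvalue of $|\mr{L}(H)|$ only yields that $H$ has \emph{exactly one bipartite} connected component; it says nothing against additional non-bipartite components, and your proposed fix (``force connectivity from low-order spectral moments'') does not go through. The low moments give the vertex count, edge count and degree-power sums, and these are perfectly consistent with, say, a bipartite tree component together with a non-bipartite unicyclic component (a connected bipartite piece with $a$ vertices has $a-1$ or more edges, each non-bipartite piece has at least as many edges as vertices, and these constraints can be balanced against $|V(H)|=|E(H)|=n$). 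There is no signless-Laplacian analogue of the matrix-tree theorem to supply connectivity the way \cref{thm:kirchhoff} does in the proof of \cref{lem:nice-LDS}, and even the triangle-free hypothesis needed for \cref{prop:degree-moments}(4) is unavailable before connectivity is known.

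The paper closes this gap with a different, arithmetic mechanism: the product $f_{|\mr{L}|}(H)$ of the \emph{nonzero} signless Laplacian eigenvalues. By the TU-subgraph expansion (\cref{thm:characteristic-coefficients-description}, \cref{thm:SL-bipartite}(2)) together with \cref{fact:disjoint-union,fact:integer-coefficient}, every non-bipartite connected component contributes an integer factor divisible by $4$ to this product (\cref{lem:new-SL-bipartite}); on the other hand $f_{|\mr{L}|}(G)=n\ell$ by \cref{lem:unicyclic-f}, and since $n=\ell+2k+1$ is odd and $\ell\equiv 2\pmod 4$ this is $\equiv 2\pmod 4$. Hence $H$ can have no non-bipartite component at all. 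This also shows that your side remark is mistaken: the hypothesis $\ell\equiv 2\pmod 4$ is not mere bookkeeping for the later construction of $\mathcal Q_n$ --- it is precisely what makes $n\ell$ not divisible by $4$, and with $\ell\equiv 0\pmod 4$ this bipartiteness argument (and hence the lemma's proof) would collapse. You would need to import the divisibility argument, or find a genuinely new way to exclude non-bipartite components, for your proposal to become a proof.
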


Note that an $(\ell,k)$-nice graph is bipartite if and only if $\ell$ is even, and as we have discussed, for bipartite graphs, the signless Laplacian spectrum is the same as the Laplacian spectrum. So, given \cref{lem:nice-LDS}, in order to prove \cref{lem:nice-SLDS} we just need to show that if $\ell\equiv 2\pmod 4$ then every graph with the same signless Laplacian spectrum as an $(\ell,k)$-nice graph must be bipartite.

The full details of the proof of \cref{lem:nice-SLDS} appear in \cref{sec:SL}, but to give a brief idea: the only spectral information we need is the product of nonzero eigenvalues. We observe that for every non-bipartite graph the product of nonzero eigenvalues is divisible by 4, and that the assumption $\ell\equiv 2\pmod 4$ guarantees that the product of nonzero eigenvalues of $G$ is \emph{not} divisible by 4. For both of these facts, we use an explicit combinatorial description of the coefficients of the characteristic function of the signless Laplacian matrix, due to Cvetkovi\'c, Rowlinson and Simi\'c~\cite{CRS07}. (These coefficients can be expressed as sums of products of eigenvalues via Vieta's formulas; in particular the nonzero coefficient with lowest degree tells us the product of nonzero eigenvalues).

\begin{remark}
\cref{lem:nice-SLDS} implies that if $n$ is odd, then there are exponentially many $n$-vertex graphs which are determined by their signless Laplacian spectrum. However, there is no bipartite nice graph on an even number of vertices, so the analogous result for even $n$ is not completely obvious. With a bit more work we were nonetheless able to prove such a result, yielding a version of \cref{thm:main} for the signless Laplacian, as follows.
\begin{theorem}
\label{thm:main-SL}The number of (unlabelled) $n$-vertex graphs determined
by their signless Laplacian spectrum is at least $e^{cn}$ for some constant $c>0$.
\end{theorem}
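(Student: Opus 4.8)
The plan is to treat odd and even $n$ separately. For odd $n$ the result is essentially immediate from \cref{lem:nice-SLDS}: an $(\ell,k)$-nice graph has $\ell+2k+1$ vertices, which is odd exactly when $\ell$ is even, so we look for nice graphs with $\ell+2k+1=n$ and $\ell\equiv2\pmod 4$. Set $k=\lfloor n/15\rfloor$, adjusting it by at most $1$ so that $\ell\coloneqq n-1-2k$ satisfies $\ell\equiv2\pmod 4$ (which is possible since $n-1$ is even); then $\ell\ge\max(12k,15)$ for large $n$. With these parameters fixed, the $(\ell,k)$-nice graphs are in bijection with the sequences $(g_1,\dots,g_{k-1})\in\{4,6\}^{k-1}$ of gaps between consecutive $2$-hubs (the gap from the $1$-hub to the first $2$-hub is forced to be $4$, and the last gap is then determined by $\ell$), and distinct sequences give non-isomorphic graphs because the hypothesis $\ell\ge12k$ makes the clockwise reading of the gap sequence from the unique $1$-hub a complete isomorphism invariant (the reversed reading would begin with a gap far larger than $4$, so it could not come from a valid nice orientation). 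Hence there are $2^{k-1}\ge e^{cn}$ such graphs, each determined by its signless Laplacian spectrum by \cref{lem:nice-SLDS}.

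For even $n$ there is no bipartite nice graph of order $n$, so instead I would use graphs of the form $G\sqcup K_1$, where $G$ is an $(\ell,k)$-nice graph on $n-1$ vertices with $\ell=2p$ for a prime $p$ satisfying $3(n-2)/7\le p\le 0.45n$ (such primes exist for large $n$ by the prime number theorem, as $3/7<0.45$), and $k\coloneqq(n-2)/2-p$. Then $k$ is a positive integer with $k=\Omega(n)$, $\ell\ge\max(12k,15)$ for large $n$, and $\ell\equiv2\pmod 4$ (since $p$ is odd); by the bijection above there are $2^{k-1}\ge e^{c'n}$ pairwise non-isomorphic such $G$, and hence this many pairwise non-isomorphic graphs $G\sqcup K_1$ on $n$ vertices (in each of them the $K_1$ is the unique isolated vertex, so distinct $G$ give distinct $G\sqcup K_1$). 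It remains to show that each $G\sqcup K_1$ is determined by its signless Laplacian spectrum, so suppose $H$ has the same signless Laplacian spectrum. The product of the nonzero signless Laplacian eigenvalues of $G\sqcup K_1$ equals that of $G$ (the extra $K_1$ contributes only the eigenvalue $0$), and as $G$ is bipartite (its cycle has even length) this product equals $|V(G)|\cdot\tau(G)=(2p+2k+1)\cdot2p$ by the matrix-tree theorem, where $\tau(\cdot)$ denotes the number of spanning trees; in particular, since $p$ is odd, it is $\equiv2\pmod 4$ and hence not divisible by $4$. Exactly as in the proof of \cref{lem:nice-SLDS} --- where one checks that for every non-bipartite graph this product is divisible by $4$ (via a non-bipartite component, whose signless Laplacian is positive definite with determinant divisible by $4$) --- it follows that $H$ is bipartite; then, by \cref{fact:bipartite-L-SL}, $H$ and $G\sqcup K_1$ have the same Laplacian spectrum.

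It thus suffices to show that $G\sqcup K_1$ is determined by its Laplacian spectrum. Since $G$ is connected, $0$ has multiplicity $2$ in the Laplacian spectrum of $G\sqcup K_1$, so $H$ has exactly two connected components; comparing traces of the two Laplacians gives $|E(H)|=|E(G)|=|V(G)|=|V(H)|-1$ (using that $G$ is unicyclic, so $|E(G)|=|V(G)|$). Hence one component of $H$, say $U$, is connected and unicyclic, and the other, say $T$, is a tree. Writing $c$ for the length of the cycle of $U$, the product of the nonzero Laplacian eigenvalues of $H$ is $|V(T)|\cdot|V(U)|\cdot c$ (by the matrix-tree theorem applied to each component, with $\tau(T)=1$ and $\tau(U)=c$), while for $G\sqcup K_1$ it is $(2p+2k+1)\cdot2p$. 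Combining this with $|V(T)|+|V(U)|=2p+2k+2$ and $3\le c\le|V(U)|$, a short case analysis forces $|V(T)|=1$ for large $n$: if $|V(T)|\ge2$ then each of $|V(T)|$, $|V(U)|$ and $c$ is strictly less than $3p$, so one of them equals $p$ or $2p$, and in each resulting case the two equations have no admissible solution (for instance, when $c=2p$ they force $\{|V(T)|,|V(U)|\}=\{1,2p+2k+1\}$, and when $|V(U)|=2p$ they force $(2k+2)c=2p+2k+1$, an even number equal to an odd one). So $T=K_1$, and then $H=K_1\sqcup H'$ with $H'$ connected and Laplacian-cospectral with $G$; by \cref{lem:nice-LDS} we get $H'\cong G$, and therefore $H\cong G\sqcup K_1$.

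The main obstacle is this last step --- ruling out a ``spurious'' splitting of $H$ into a nontrivial tree and a connected unicyclic graph --- which is why we impose that $\ell/2$ be prime; the number-theoretic argument sketched above should do the job, but the case analysis has to be carried out with care. An alternative, avoiding the primality assumption, would be to re-run the degree-statistics part of the proof of \cref{lem:nice-LDS} in this disconnected setting, using that Laplacian spectral moments are additive over components and vanish on $K_1$, to conclude directly that $H$ has a vertex of degree $0$ --- which again is an isolated vertex forming its own component.
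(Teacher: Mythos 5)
Your proposal is correct and follows essentially the same route as the paper: the odd case directly from \cref{lem:nice-SLDS}, and the even case via $G\sqcup K_1$ with $\ell=2p$ for a prime $p$ in roughly $(n/3,n/2)$, using the mod-$4$ argument to force bipartiteness, the two-component unicyclic-plus-tree decomposition, and a divisibility case analysis on which factor of $v_T\cdot v_U\cdot c=2n_1p$ carries the prime $p$. The remaining cases you left implicit (e.g.\ $c=p$ is impossible since $c$ is even, and $v_T=p$ or $v_U=p$ force $c<4$) do close as claimed, matching the paper's appendix proof.
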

\noindent To prove \cref{thm:main-SL}, we combine \cref{lem:nice-SLDS} with some of the ideas described in the next subsection;
\ifarxiv
the details appear in \cref{sec:SL-extra}.
\fi
\ifjournal
the details appear in the appendix of the arXiv version of this paper.
\fi
%(which seems to be a new result). The only wrinkle is that if $\ell\equiv 2\pmod 4$, then any $(\ell,k)$-nice graph has an odd number of vertices. So, for odd $n$ we consider $(\ell,k)$-nice graphs, and for even $n$ we consider $(\ell,k)$-nice graphs augmented with a single isolated vertex. The details appear in \cref{sec:SL-extra}.
\end{remark}

\subsection{Exponentially many graphs determined by their adjacency spectrum}\label{subsec:DS}
As briefly mentioned earlier in this outline, there is a close connection between the signless Laplacian spectrum of a graph and the adjacency matrix of its line graph. To be a bit more specific, the nonzero eigenvalues of $|\mr{L}(G)|$ are in correspondence with the eigenvalues of $\mr{A}(\on{line}(G))$ different from $-2$. One might (na\"ively) hope that $\on{line}(G)$ being determined by its adjacency spectrum is equivalent to $G$ being determined by its signless Laplacian spectrum. If this were true, it would be easy to complete the proof of \cref{thm:main}, by considering the family of all $n$-vertex graphs which are the line graph of some nice graph as in \cref{lem:nice-SLDS}.

Unfortunately, this is too much to hope for in general, but quite some theory has been developed in this direction, and we are able to leverage this theory in the special case where $G$ has a large prime number of vertices.
\begin{lemma}\label{lem:A-prime}
   There is a constant $n_0$ such that the following holds. Let $G$ be an $(\ell,k)$-nice graph with $\ell\equiv 2\pmod 4$, let $n=\ell+2k+1$ be its number of vertices, and suppose that $n$ is a prime number larger than $n_0$. Then $\on{line}(G)$ is determined by its adjacency spectrum.
\end{lemma}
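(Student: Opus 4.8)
The plan is to show that every graph $Q$ with the same adjacency spectrum as $\on{line}(G)$ is isomorphic to $\on{line}(G)$. First I would extract the spectral invariants of $\Gamma:=\on{line}(G)$ that matter. Being nice, $G$ is connected with a unique cycle, which is even, so $G$ is bipartite and has $\abs{E(G)}=\abs{V(G)}=\ell+2k+1=n$; writing $B$ for the (square, $n\times n$) incidence matrix of $G$ we have $\mr A(\Gamma)=B^{\mathsf T}B-2I$ and $\on{rank}B=n-1$, so $-2$ is an eigenvalue of $\Gamma$ of multiplicity exactly $1$. Also $\Gamma$ is connected (as $G$ is), so its largest eigenvalue is simple, and $\abs{E(\Gamma)}=\sum_{v\in V(G)}\binom{d_G(v)}{2}=\ell+5k+2$, which is only about $n$. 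As these are all spectral invariants, the hypothetical mate $Q$ has $n$ vertices, least eigenvalue $-2$ of multiplicity $1$, a simple largest eigenvalue, and $O(n)$ edges.

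The crux of the argument is to show that $Q$ must itself be the line graph of a connected graph. Every connected component of $Q$ has least eigenvalue $\ge-2$, so by the Cameron--Goethals--Seidel--Shult theorem each component is a generalised line graph or one of the finitely many exceptional graphs, all of which have at most $36$ vertices. Taking $n_0\ge 36$ rules out exceptional components of linear size, and I would then combine three further facts to finish the structural analysis: $-2$ has multiplicity only $1$ (so there is essentially no room for the ``petals'' of a proper generalised line graph, nor for two components each carrying $-2$); $Q$ has only $O(n)$ edges, so it is sparse and cannot accommodate the locally dense configurations created by petals or by exceptional graphs; and $n$ is a large prime, which is what I expect is needed to close off the remaining borderline decompositions of $Q$ into several components or into a root multigraph with petals. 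The outcome should be that $Q=\on{line}(H)$ for a connected graph $H$, which we may assume has no isolated vertices.

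Given $Q=\on{line}(H)$, I would pass to the signless Laplacian. We have $\abs{E(H)}=\abs{V(Q)}=n$, and since the multiplicity of $-2$ in $\on{line}(H)$ equals $\abs{E(H)}-\on{rank}B_H$, we get $\on{rank}B_H=n-1$, whence, as $H$ is connected, $\abs{V(H)}=n$ if $H$ is bipartite and $\abs{V(H)}=n-1$ otherwise. Suppose $H$ were non-bipartite. Then $\abs{\mr L(H)}=B_HB_H^{\mathsf T}$ is positive definite, and its nonzero eigenvalues are exactly the numbers $2+\mu$ with $\mu$ an eigenvalue of $\on{line}(H)$ different from $-2$, i.e.\ the same multiset as the nonzero eigenvalues of $\abs{\mr L(G)}$ (using that $\on{line}(H)$ and $\on{line}(G)$ are cospectral); in particular the two products of nonzero eigenvalues agree. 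But exactly as in the proof of \cref{lem:nice-SLDS}, this product is divisible by $4$ because $H$ is non-bipartite, and is not divisible by $4$ because $G$ is a nice graph with $\ell\equiv 2\pmod 4$ --- a contradiction. Hence $H$ is bipartite, so $\abs{V(H)}=n=\abs{V(G)}$, and since both $G$ and $H$ are connected and bipartite the same computation shows that $\abs{\mr L(H)}$ and $\abs{\mr L(G)}$ are cospectral. By \cref{lem:nice-SLDS} we get $H\cong G$, and since $G$ is nice it has more than four vertices and is in particular neither $K_3$ nor $K_{1,3}$, so Whitney's line-graph isomorphism theorem gives $Q=\on{line}(H)\cong\on{line}(G)$.

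The main obstacle is the middle step: converting the CGSS dichotomy (a disjoint union of generalised line graphs and small exceptional graphs) into the statement that $Q$ is the line graph of a single connected graph. Handling the petals of generalised line graphs and excluding spurious extra components is where the hypotheses that $n$ is large (for the exceptional graphs) and prime (for the remaining arithmetic of component sizes and petal counts) do their work; everything else is a fairly mechanical combination of the incidence-matrix identity, the divisibility input imported from \cref{lem:nice-SLDS}, and Whitney's theorem.
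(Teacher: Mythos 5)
Your endgame (once $Q=\on{line}(H)$ for a connected $H$ with $n$ edges, use the divisibility-by-$4$ criterion to force $H$ bipartite, conclude the signless Laplacian spectra of $H$ and $G$ agree, and invoke \cref{lem:nice-SLDS}) matches the paper and is correct. But the step you yourself flag as the crux --- that $Q$ must be the line graph of a \emph{single connected} graph --- is left as a sketch, and the sketch does not work as stated. First, the multiplicity-$1$ of $-2$ does \emph{not} rule out proper generalised line graphs: e.g.\ $\on{line}(K_2;1,0)\cong P_3$ has no eigenvalue $-2$ at all. What a proper generalised line graph is guaranteed to have is a \emph{zero} eigenvalue (the two non-adjacent vertices of a cocktail-party part have identical neighbourhoods, giving two equal rows of the adjacency matrix). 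To exploit this one must prove that $\det\mr{A}(\on{line}(G))\ne 0$, which in the paper is a genuinely nontrivial computation (a recurrence over spanning elementary subgraphs, \cref{lem:determinant}, occupying \cref{subsec:determinant}, with answer $\pm4$); nothing in your proposal supplies this input or a substitute for it. Second, taking $n_0\ge 36$ does not exclude exceptional or spurious \emph{small} components of $Q$: a disconnected cospectral mate can perfectly well have components with at most $36$ vertices, to which \cref{thm:CGSS} says nothing.

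The paper closes both gaps with one arithmetic device you do not have: the multiplicative invariant $f_{\mr{A}}(Q)=\prod_{\lambda\ne-2}(\lambda+2)$, which factors over components as a product of positive integers and equals $n\ell$ by \cref{lem:unicyclic-f}. Defining $n_0=\max\{f_{\mr{A}}(Q'):|V(Q')|\le 36\}$ (not merely $36$), primality of $n$ forces a single component $Q_1$ with $n\mid f_{\mr{A}}(Q_1)$, hence $f_{\mr{A}}(Q_1)\ge n>n_0$, hence $Q_1$ has more than $36$ vertices, hence (by CGSS plus the determinant computation) $Q_1=\on{line}(H_1)$; a short case analysis on whether $-2$ is an eigenvalue of $Q_1$ ($H_1$ a tree versus even-unicyclic, using $f_{\mr{A}}(Q_1)=v_1$ or $v_1\ell_1$) then shows $Q_1$ must be all of $Q$ with $H_1$ having exactly $n$ vertices. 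Your appeal to sparsity and to ``the remaining arithmetic of component sizes'' gestures at this but does not carry it out, and without the determinant computation and the $f_{\mr{A}}$-based definition of $n_0$ the argument does not close.
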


\begin{figure}
\centering
\begin{tikzpicture}[scale=0.6]
\node [vertex, fill=white, draw=white] (p0) at (-14, 0) {};
\node [vertex, fill=white, draw=white] (p1) at (4,0) {};
\node [vertex] (0) at (-13.5, 0) {};
\node [vertex] (1) at (-12.5, 0.5) {};
\node [vertex] (2) at (-11.5, 1) {};
\node [vertex] (3) at (-10.5, 1) {};
\node [vertex] (4) at (-9.5, 1) {};
\node [vertex] (5) at (-8.5, 1) {};
\node [vertex] (6) at (-7.5, 1) {};
\node [vertex] (7) at (-6.5, 1) {};
\node [vertex] (8) at (-5.5, 1) {};
\node [vertex] (9) at (-4.5, 1) {};
\node [vertex] (10) at (-3.5, 1) {};
\node [vertex] (11) at (-2.5, 1) {};
\node [vertex] at (-1.5, 1) {};
\node [vertex] at (-0.5, 1) {};
\node [vertex] (x1) at (0.5, 1) {};
\node [vertex] (x2) at (1.5, 1) {};
\node [vertex] (y1) at (0.5, 2) {};
\node [vertex] (y2) at (1.5, 2) {};
\node [vertex] at (-1.5, -1) {};
\node [vertex] at (-0.5,-1) {};
\node [vertex] at (0.5, -1) {};
\node [vertex] at (1.5, -1) {};
\node [vertex] (12) at (8.5,1) {};
\node [vertex] (13) at (9.5,0.5) {};
\node [vertex] (14) at (9.5,-0.5) {};
\node [vertex] (15) at (8.5,-1) {};

\node [vertex] at (2.5,1) {};
\node [vertex] at (3.5,1) {};
\node [vertex] at (4.5,1) {};
\node [vertex] at (5.5,1) {};
\node [vertex] at (6.5,1) {};
\node [vertex] at (7.5,1) {};
\node [vertex] at (2.5,-1) {};
\node [vertex] at (3.5,-1) {};
\node [vertex] at (4.5,-1) {};
\node [vertex] at (5.5,-1) {};
\node [vertex] at (6.5,-1) {};
\node [vertex] at (7.5,-1) {};

\node [vertex] (16) at (-2.5, -1) {};
\node [vertex] (17) at (-3.5, -1) {};
\node [vertex] (18) at (-4.5, -1) {};
\node [vertex] (19) at (-5.5, -1) {};
\node [vertex] (20) at (-6.5, -1) {};
\node [vertex] (21) at (-7.5, -1) {};
\node [vertex] (22) at (-8.5, -1) {};
\node [vertex] (23) at (-9.5, -1) {};
\node [vertex] (24) at (-10.5, -1) {};
\node [vertex] (25) at (-11.5, -1) {};
\node [vertex] (26) at (-12.5, -0.5) {};
\node [vertex] (27) at (-9.5, 2) {};
\node [vertex] (28) at (-8.5, 2) {};
\node [vertex] (29) at (-3.5, 2) {};
\node [vertex] (30) at (-2.5, 2) {};

\draw (1) -- (2) -- (12) -- (13) -- (14) -- (15) -- (25) -- (26) -- (1);
\draw (5) -- (28) -- (27) -- (4);
\draw (4) -- (28);
\draw (5) -- (27);
\draw (11) -- (29) -- (30) -- (10);
\draw (11) -- (30);
\draw (10) -- (29);
\draw (1) -- (0) -- (26);
\draw (x1) -- (y1) -- (y2) -- (x2) -- (y1);
\draw (x1) -- (y2);
\end{tikzpicture}

\caption{\label{fig:line-nice}The line graph of the $(46,3)$-nice graph in \cref{fig:nice}.}
\end{figure}
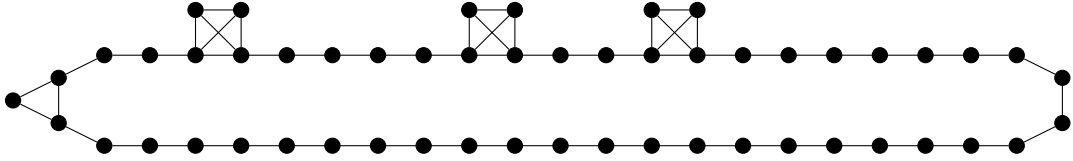
The proof of \cref{lem:A-prime} appears in \cref{sec:adjacency-prime}. To give a rough idea of the strategy of the proof: recalling \cref{lem:nice-SLDS}, in order to prove \cref{lem:A-prime} it suffices to show that if a graph $Q$ has the same (adjacency) spectrum as $\on{line}(G)$, then
\begin{enumerate}
    \item $Q=\on{line}(H)$ for some $H$, and
    \item $H$ has the same signless Laplacian spectrum as $G$.
\end{enumerate}
For (1), we have the Cameron--Goethals--Seidel--Shult theorem at our disposal, which we can use to show that $Q$ is a so-called \emph{generalised line graph} (except possibly for some ``exceptional'' connected components with at most 36 vertices). Our main task is to rule out generalised line graphs which are not line graphs. For (2), our task is to show that in the signless Laplacian spectra of $G$ and $H$, the multiplicities of the zero eigenvalue are the same (all nonzero eigenvalues are guaranteed to be the same). This amounts to showing that $G$ and $H$ have the same number of vertices.

For the first of these two tasks, we observe that if a generalised line graph is not a true line graph, then its adjacency matrix has a zero eigenvalue. So, it suffices to prove that $\on{line}(G)$ does not have a zero eigenvalue, i.e., its adjacency matrix has nonzero determinant. We accomplish this by directly computing the determinant of $\on{line}(G)$ (this is a little involved, but comes down to a certain recurrence).

For the second of these two tasks, we recall that the adjacency spectrum of a line graph tells us the nonzero eigenvalues of the signless Laplacian spectrum, and in particular tells us the product of these nonzero eigenvalues (this product was already discussed in \cref{subsec:SLDS}). Via a direct computation on $G$, we observe that this product is divisible by $n$. For each connected component of $H$, the contribution to this product is always an integer, so if $n$ is a prime number then there must be a single connected component which is ``responsible for the factor of $n$''. We are then able to deduce that this component has exactly $n$ vertices and $n$ edges, via a careful case analysis involving a combinatorial interpretation of the multiplicity of the eigenvalue $-2$.

Of course, even after proving \cref{lem:A-prime} we are not yet done: every nice graph has the same number of edges as vertices, so \cref{lem:A-prime} can only be directly used to prove \cref{thm:main} when $n$ is prime. For general $n$ we consider graphs with two connected components, one of which is the line graph of a nice graph on a prime number of vertices and the other of which is a complete graph. The parameters of the nice graph and the size of the complete graph need to satisfy certain inequalities and number-theoretic properties; the details are a bit complicated and we defer the precise specification to \cref{sec:adjacency-2}.

In order to show that all relevant inequalities and number-theoretic properties can be simultaneously satisfied (by exponentially many graphs), we use a quantitative strengthening of Dirichlet's theorem on primes in arithmetic progressions. To actually show that all these graphs are determined by their adjacency spectrum, we proceed similarly to \cref{lem:A-prime}, but the details are more complicated. Roughly speaking, we identify the complete graph component using its single large eigenvalue and some number-theoretic considerations, and then we apply \cref{lem:A-prime}.

\section{Preliminaries}\label{sec:prelim}

In this section we collect a number of general tools and results that
will be used throughout the paper.
Where possible, we cite the original sources of each of these results, but we remark that many of these results can be found together in certain monographs on algebraic graph theory or graph spectra (see for example~\cite{CDS95,BH12,GR01,CRS04,Big93}).

\subsection{Basic observations}

First, in \cref{sec:overview} we have already mentioned that the signless Laplacian
and the Laplacian spectra coincide for bipartite graphs.
\begin{fact}[{\cite[Section~2.3]{vDH03}}]\label{fact:bipartite-L-SL}
If a graph is bipartite, then its signless Laplacian spectrum is the
same as its Laplacian spectrum.
\end{fact}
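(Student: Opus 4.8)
The plan is to produce an explicit similarity transformation between $\mr{L}(G)$ and $|\mr{L}(G)|$; since similar matrices have the same characteristic polynomial, this immediately gives equality of spectra (with multiplicities). Concretely, let $G$ have vertices $v_1,\dots,v_n$ and fix a bipartition $V(G)=X\sqcup Y$. I would define the diagonal signature matrix $S\in\{-1,+1\}^{n\times n}$ by $S_{ii}=+1$ if $v_i\in X$ and $S_{ii}=-1$ if $v_i\in Y$. Then $S$ is symmetric and $S^2=I$, so $S$ is orthogonal with $S^{-1}=S$.

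The one place where bipartiteness enters is the computation $S\,\mr{A}(G)\,S=-\mr{A}(G)$: indeed $(S\,\mr{A}(G)\,S)_{ij}=S_{ii}\,\mr{A}(G)_{ij}\,S_{jj}$, and whenever $\mr{A}(G)_{ij}=1$ the endpoints $v_i,v_j$ lie in opposite parts of the bipartition, so $S_{ii}S_{jj}=-1$. Meanwhile $\mr{D}(G)$ is diagonal and hence commutes with $S$, giving $S\,\mr{D}(G)\,S=\mr{D}(G)$. Combining these,
\[
S\,|\mr{L}(G)|\,S \;=\; S\bigl(\mr{D}(G)+\mr{A}(G)\bigr)S \;=\; \mr{D}(G)-\mr{A}(G) \;=\; \mr{L}(G),
\]
so $\mr{L}(G)$ and $|\mr{L}(G)|$ are conjugate via the involution $S$, hence similar, hence cospectral.

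There is essentially no obstacle here: the argument is a one-line conjugation, and the hypothesis is used exactly once (at the step $S_{ii}S_{jj}=-1$ for every edge), which also makes transparent why the statement fails for non-bipartite graphs — if some edge lies within a part then the corresponding entry of $\mr{A}(G)$ is not sign-flipped. An equivalent but slightly less slick phrasing is to check directly that $x\mapsto Sx$ carries each eigenvector of $|\mr{L}(G)|$ to an eigenvector of $\mr{L}(G)$ with the same eigenvalue; I would present the matrix-similarity version as it is cleanest.
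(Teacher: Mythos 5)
Your proof is correct and is exactly the standard signature-matrix conjugation argument behind the cited fact (the paper itself states this as a Fact with a reference to van Dam--Haemers rather than proving it). The computation $S\,\mr{A}(G)\,S=-\mr{A}(G)$, $S\,\mr{D}(G)\,S=\mr{D}(G)$, hence $S\,|\mr{L}(G)|\,S=\mr{L}(G)$ with $S^{-1}=S$, is complete and uses bipartiteness in precisely the right place.
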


Also, we record the near-trivial fact that for all notions of spectrum discussed so far, the spectrum of
a graph can be broken down into the spectra of its connected components.
\begin{fact}\label{fact:disjoint-union}
For any graph $G$, the spectrum of $G$ (with respect to the adjacency,
Laplacian or signless Laplacian matrix) is the multiset union of the spectra of
the connected components of $G$.
\end{fact}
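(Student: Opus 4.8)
The plan is to reduce the statement to the elementary fact that the characteristic polynomial of a block-diagonal matrix factors as the product of the characteristic polynomials of its diagonal blocks. First I would fix a labelling of the vertices of $G$ in which the vertices of each connected component appear consecutively: if $G$ has components $G_1,\dots,G_m$, we list first all vertices of $G_1$, then all vertices of $G_2$, and so on. With respect to this labelling, $\mr A(G)$ is block-diagonal with diagonal blocks $\mr A(G_1),\dots,\mr A(G_m)$, since $G$ has no edges between distinct components. The degree matrix $\mr D(G)$ is diagonal, hence trivially block-diagonal with blocks $\mr D(G_1),\dots,\mr D(G_m)$, so $\mr L(G)=\mr D(G)-\mr A(G)$ and $\abs{\mr L(G)}=\mr D(G)+\mr A(G)$ are block-diagonal with blocks $\mr L(G_i)$ and $\abs{\mr L(G_i)}$ respectively.

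Next I would recall that relabelling the vertices replaces each of these matrices $M$ by $P^{-1}MP$ for a suitable permutation matrix $P$, an operation which preserves the multiset of eigenvalues; so the adjacency, Laplacian and signless Laplacian spectra of a graph do not depend on the chosen vertex labelling, and in particular may be computed using the labelling above.

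Finally, for a block-diagonal matrix $M=\on{diag}(M_1,\dots,M_m)$ we have $\det(xI-M)=\prod_{i=1}^{m}\det(xI-M_i)$, so the characteristic polynomial of $M$ is the product of those of its blocks, and therefore its multiset of roots --- that is, its spectrum --- is the multiset union of the spectra of the blocks. Applying this with $M$ equal to $\mr A(G)$, $\mr L(G)$ or $\abs{\mr L(G)}$, and $M_i$ the corresponding matrix of the component $G_i$, gives the claim. I do not expect any genuine obstacle here; the only point needing (minimal) care is the permutation-invariance of the spectrum, which is precisely what makes the statement meaningful for an unlabelled graph in the first place.
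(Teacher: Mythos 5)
Your argument is correct and is the standard one: the paper states this as a near-trivial fact without proof, and the block-diagonalisation of $\mr A(G)$, $\mr L(G)$ and $|\mr L(G)|$ under a component-ordered labelling, together with permutation-invariance of the spectrum and the factorisation of the characteristic polynomial of a block-diagonal matrix, is exactly the justification one would supply. Nothing is missing.
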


\subsection{Spectral inequalities}

Spectral graph theory provides a range of powerful inequalities on
various combinatorial parameters, usually in terms of the largest,
second-largest or smallest eigenvalue of the adjacency or Laplacian
matrix. In this paper we will only need some simple inequalities concerning
the numbers of vertices and edges, and the degrees.
\begin{lemma}[{\cite[Section~3.2]{CDS95} and \cite{Yua88}}]
\label{lem:A-degree-inequality}Consider a graph $G$ with $n$ vertices,
$m$ edges, and maximum degree $\Delta$. Let $\lambda_{\mr{max}}$ be the
largest eigenvalue of the adjacency matrix $\mr{A}(G)$. Then
\begin{enumerate}
\item $\lambda_{\mr{max}}\le\Delta$,
\item $\lambda_{\mr{max}}\le \sqrt{2m-n+1}$.
%\item ${\displaystyle \lambda_{1}\le\sqrt{\frac{2m(n-1)}{n}}}.$
\end{enumerate}
\end{lemma}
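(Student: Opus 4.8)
The plan is to prove the two bounds separately, using only elementary linear algebra facts about the adjacency matrix $\mr{A}(G)$ of a graph $G$ with $n$ vertices, $m$ edges, and maximum degree $\Delta$. Throughout I will use that $\mr{A}=\mr{A}(G)$ is a real symmetric $0/1$ matrix with zero diagonal, so all its eigenvalues are real, and that $\lambda_{\mr{max}}$ is the largest one. I will also use the Rayleigh-quotient characterisation $\lambda_{\mr{max}}=\max_{x\ne 0}\frac{x^{\top}\mr{A}x}{x^{\top}x}$, and — for part (1) — the Perron--Frobenius fact that the largest eigenvalue of a symmetric nonnegative matrix admits a nonnegative eigenvector (or, alternatively, I can avoid Perron--Frobenius entirely by a direct argument, see below).

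For part (1), first I would reduce to a connected graph: by \cref{fact:disjoint-union} the spectrum of $G$ is the union of the spectra of its components, so $\lambda_{\mr{max}}(G)$ equals $\lambda_{\mr{max}}$ of some component, which has maximum degree at most $\Delta$; hence it suffices to treat $G$ connected. Then $\mr{A}$ is an irreducible nonnegative symmetric matrix, so by Perron--Frobenius there is an eigenvector $x$ for $\lambda_{\mr{max}}$ with all entries positive. Pick the coordinate $i$ where $x_i$ is largest. The eigenvalue equation at coordinate $i$ reads $\lambda_{\mr{max}}x_i=\sum_{j\sim i}x_j\le \deg(i)\cdot x_i\le \Delta x_i$, and dividing by $x_i>0$ gives $\lambda_{\mr{max}}\le\Delta$. (If one prefers not to invoke irreducibility/Perron--Frobenius, one can instead take any eigenvector $x$ for $\lambda_{\mr{max}}$, let $i$ be a coordinate maximising $|x_i|$, and bound $|\lambda_{\mr{max}}|\,|x_i|=|\sum_{j\sim i}x_j|\le\sum_{j\sim i}|x_j|\le \Delta|x_i|$; this gives $|\lambda_{\mr{max}}|\le\Delta$, which in particular yields $\lambda_{\mr{max}}\le\Delta$.)

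For part (2), the plan is to feed a cleverly chosen test vector into the Rayleigh quotient. Let $j=j(G)\le n$ be the number of \emph{non-isolated} vertices of $G$, and note $2m\ge j$ (every non-isolated vertex has degree at least $1$). Take $x$ to be the indicator vector of the non-isolated vertices (entry $1$ on each non-isolated vertex, $0$ elsewhere); then $x^{\top}x=j$ and $x^{\top}\mr{A}x=\sum_{u,v\text{ non-isolated}}\mr{A}_{uv}=2m$ (every edge joins two non-isolated vertices and is counted twice). Hence $\lambda_{\mr{max}}\ge 2m/j$. Combining with part (1) — applied more carefully — or rather combining two bounds: one knows $\lambda_{\mr{max}}^2\le \sum_i\lambda_i^2=\operatorname{tr}(\mr{A}^2)=2m$, but that only gives $\lambda_{\mr{max}}\le\sqrt{2m}$, not the sharper claimed bound. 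So instead the right move is: since isolated vertices contribute eigenvalue $0$, we may restrict to the subgraph on the $j$ non-isolated vertices without changing $\lambda_{\mr{max}}$; there $\lambda_{\mr{max}}\le\sqrt{2m-j+1}$ would follow from the $n=j$ case, so it suffices to prove the bound when $G$ has no isolated vertices, i.e. $n=j\le 2m$. Now I would use the standard argument: let $x$ be a unit Perron eigenvector for $\lambda_{\mr{max}}$ (from part (1)'s setup, WLOG $G$ connected, $x>0$), and estimate
\[
\lambda_{\mr{max}}^2=x^{\top}\mr{A}^2x=\sum_{i}\Big(\sum_{j\sim i}x_j\Big)^2.
\]
By Cauchy--Schwarz, $\big(\sum_{j\sim i}x_j\big)^2\le \deg(i)\sum_{j\sim i}x_j^2$, so $\lambda_{\mr{max}}^2\le\sum_i\deg(i)\sum_{j\sim i}x_j^2=\sum_j x_j^2\sum_{i\sim j}\deg(i)$; this is getting complicated, and I expect the cleanest route is actually the classical one due to Hong/Yuan: order the vertices so that $\lambda_{\mr{max}}x_1\ge\cdots$, or directly use that $\lambda_{\mr{max}}\le\sqrt{2m-n+1}$ is equivalent to $\lambda_{\mr{max}}^2+n-1\le 2m$, and prove the latter by noting $\sum_{i=1}^n\lambda_i^2=2m$ with $\lambda_1=\lambda_{\mr{max}}$, so it remains to show $\sum_{i=2}^n\lambda_i^2\ge n-1$; and $\sum_{i=2}^n\lambda_i=-\lambda_{\mr{max}}\le 0$ combined with $\sum_{i=2}^n\lambda_i^2\ge\frac{1}{n-1}\big(\sum_{i=2}^n\lambda_i\big)^2$... that lower bound is too weak. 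The genuinely correct proof (Yuan 1988, or Hong 1993) shows $\lambda_{\mr{max}}\le\sqrt{2m-n+1}$ for connected $G$ by a short argument comparing $G$ to the star/path extremal cases; I would cite \cite{Yua88} for the precise inequality and reproduce its one-paragraph proof, handling the disconnected case by the reduction to non-isolated vertices above (an isolated vertex decreases $n$ by $1$, decreases $2m-n+1$ by $1$, and leaves $\lambda_{\mr{max}}$ unchanged, so the bound for $n'=n-(\#\text{isolated})$ implies it for $n$).

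The main obstacle is part (2): part (1) is a routine Perron--Frobenius / extremal-coordinate argument, but the bound $\lambda_{\mr{max}}\le\sqrt{2m-n+1}$ is sharp only for $G$ connected (it fails e.g. for a matching plus many isolated vertices unless one is careful), so the delicate points are (i) the reduction from general $G$ to connected $G$ via stripping isolated vertices and then via \cref{fact:disjoint-union} handling several nontrivial components — one must check the bound is "superadditive enough" over components, i.e. that if $G$ has components $G_1,\dots,G_r$ with $n_t$ vertices and $m_t$ edges then $\max_t\sqrt{2m_t-n_t+1}\le\sqrt{2m-n+1}$, which holds since each component has $m_t\ge n_t-1$ so $2m-n+1=\sum_t(2m_t-n_t)+1\ge 2m_{t_0}-n_{t_0}+1$ after accounting for the $+1$'s from the other components — and (ii) cleanly invoking or reproving the connected case, for which I will lean on the cited reference \cite{Yua88}.
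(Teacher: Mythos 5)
The paper does not actually prove this lemma --- it is quoted from \cite{CDS95} and \cite{Yua88} as a known preliminary --- so there is no in-paper argument to compare against beyond the citation. Judged on its own terms, your part (1) is a correct and complete version of the standard extremal-coordinate argument, and your Perron-free variant via $|\lambda_{\mr{max}}||x_i|\le\sum_{j\sim i}|x_j|\le\Delta|x_i|$ is also fine.

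Part (2) has two problems. First, you never actually prove the core inequality: after several false starts (the indicator test vector gives a lower bound on $\lambda_{\mr{max}}$, not an upper bound; $\lambda_{\mr{max}}^2\le\operatorname{tr}(\mr{A}^2)=2m$ and the power-mean bound on $\sum_{i\ge2}\lambda_i^2$ are, as you note yourself, too weak), you end by deferring to \cite{Yua88}. Citing the reference is legitimate (it is exactly what the paper does), but then the surrounding reductions are the only content you supply, and one of them is wrong. Your isolated-vertex reduction runs in the wrong direction: deleting an isolated vertex decreases $n$ by $1$ and hence \emph{increases} $2m-n+1$ by $1$, so the bound for the stripped graph is \emph{weaker} than the bound for $G$ and does not imply it. Indeed no reduction can work, because the inequality as stated is genuinely false in the presence of isolated vertices: for $K_3$ plus one isolated vertex, $n=4$, $m=3$, $\sqrt{2m-n+1}=\sqrt{3}<2=\lambda_{\mr{max}}$. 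You essentially observed this with your matching-plus-isolated-vertices example, and then contradicted it. The correct form of Hong's/Yuan's theorem assumes minimum degree at least $1$ (or connectivity), and the lemma as printed in the paper is missing this hypothesis. Your superadditivity check over components with at least two vertices is correct, so with the hypothesis $\delta(G)\ge1$ added, your reduction to the connected case plus the citation would be a valid proof. It is worth noting that the paper's only use of part (2), in \cref{lem:recognise-clique}, is unaffected: a graph with $n-1$ as an adjacency eigenvalue must be connected (by part (1) applied to the component carrying that eigenvalue), so the missing hypothesis holds there.
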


\begin{lemma}[{\cite[Theorem~3.7]{Fie73} and \cite[Theorem~2]{AM85}}]
\label{lem:L-degree-inequality}Let $G$ be a graph, write $V$ and $E$ for
its sets of vertices and edges, and $\Delta$ for its maximum degree. Let $\rho_{\mr{max}}$ be the largest eigenvalue of the Laplacian matrix $\mr{L}(G)$. Then
\begin{enumerate}
\item $\rho_{\mr{max}}> \Delta$, %\max\{\deg(v):v\in V\}$,
\item $\rho_{\mr{max}}\le\max\{\deg(u)+\deg(v):uv\in E\}$.
\end{enumerate}
% \[
% \max\{\deg(v):v\in V\}\le \rho_{\mr{max}}\le\max\{\deg(u)+\deg(v):uv\in E\}.
% \]
\end{lemma}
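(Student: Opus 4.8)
The plan is to use the Rayleigh-quotient characterisation $\rho_{\mr{max}}=\max_{x\ne 0}(x^{T}\mr L(G)x)/(x^{T}x)$, together with the standard identities $x^{T}\mr L(G)x=\sum_{uv\in E}(x_{u}-x_{v})^{2}$ and $x^{T}|\mr L(G)|x=\sum_{uv\in E}(x_{u}+x_{v})^{2}$.

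For (1) I would in fact establish the stronger bound $\rho_{\mr{max}}\ge\Delta+1$ (we may assume $G$ has at least one edge, so that $\Delta\ge 1$; the lemma is only ever applied in this regime). Let $v$ be a vertex of maximum degree $\Delta$, with neighbourhood $N(v)$, and test the Rayleigh quotient against the vector $x$ given by $x_{v}=\Delta$, $x_{u}=-1$ for $u\in N(v)$, and $x_{w}=0$ for all other $w$. Each of the $\Delta$ edges at $v$ contributes $(\Delta+1)^{2}$ to $\sum_{uv\in E}(x_{u}-x_{v})^{2}$, each edge inside $N(v)$ contributes $0$, and all remaining edges contribute nonnegative amounts; since $x^{T}x=\Delta^{2}+\Delta=\Delta(\Delta+1)$, this gives $\rho_{\mr{max}}\ge\Delta(\Delta+1)^{2}/(\Delta(\Delta+1))=\Delta+1>\Delta$.

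For (2) the plan is to route through the signless Laplacian and the line graph, so that the already-available \cref{lem:A-degree-inequality} can be applied. For any unit vector $x$, the vector $y=(|x_{i}|)_{i}$ is also a unit vector and satisfies $y^{T}|\mr L(G)|y=\sum_{uv\in E}(|x_{u}|+|x_{v}|)^{2}\ge\sum_{uv\in E}(x_{u}-x_{v})^{2}=x^{T}\mr L(G)x$, so $\rho_{\mr{max}}(\mr L(G))\le\rho_{\mr{max}}(|\mr L(G)|)$. Next, writing $\mr B$ for the vertex--edge incidence matrix of $G$, one has $|\mr L(G)|=\mr B\mr B^{T}$ and $\mr A(\on{line}(G))=\mr B^{T}\mr B-2I$, so the symmetric matrices $\mr B\mr B^{T}$ and $\mr B^{T}\mr B$ have the same nonzero eigenvalues and therefore $\rho_{\mr{max}}(|\mr L(G)|)=\lambda_{\mr{max}}(\mr A(\on{line}(G)))+2$. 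Finally, the vertex of $\on{line}(G)$ corresponding to an edge $uv$ of $G$ has degree $\deg(u)+\deg(v)-2$, so the maximum degree of $\on{line}(G)$ equals $\max\{\deg(u)+\deg(v):uv\in E\}-2$; applying the first part of \cref{lem:A-degree-inequality} to $\on{line}(G)$ bounds $\lambda_{\mr{max}}(\mr A(\on{line}(G)))$ by this quantity. Chaining the three steps yields $\rho_{\mr{max}}(\mr L(G))\le\max\{\deg(u)+\deg(v):uv\in E\}$.

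I do not expect either part to present a genuine obstacle --- both inequalities are classical (due to Fiedler and to Anderson--Morley, respectively) and the arguments above are short. The only points that need care are the trivial edgeless case in (1) (harmless, as the inequality is only invoked when $G$ has edges), and, in (2), fixing the normalisations in the incidence-matrix identities so that the $-2$ shift relating $|\mr L(G)|$ to $\mr A(\on{line}(G))$ comes out correctly; one should also note that isolated vertices of $G$ merely contribute extra zero eigenvalues to $|\mr L(G)|=\mr B\mr B^{T}$ and so do not affect the chain of inequalities.
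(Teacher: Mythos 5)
Your proposal is correct. Note that the paper does not prove this lemma at all: it is imported as a black-box citation to Fiedler and to Anderson--Morley, so there is no in-paper argument to compare against. Your two arguments are in fact the standard proofs of these classical bounds: for (1), your test vector $x_v=\Delta$, $x_u=-1$ on $N(v)$ is exactly the eigenvector of the star $K_{1,\Delta}$ for its Laplacian eigenvalue $\Delta+1$, and the Rayleigh-quotient computation checks out ($x^{T}\mr L(G)x\ge\Delta(\Delta+1)^{2}$ against $x^{T}x=\Delta(\Delta+1)$, giving the stronger bound $\rho_{\mr{max}}\ge\Delta+1$); for (2), the chain $\rho_{\mr{max}}(\mr L(G))\le\rho_{\mr{max}}(|\mr L(G)|)=\lambda_{\mr{max}}(\mr A(\on{line}(G)))+2\le\Delta(\on{line}(G))+2=\max\{\deg(u)+\deg(v):uv\in E\}$ is precisely the Anderson--Morley route, and each link is justified correctly (the absolute-value trick on the quadratic forms, the equality of nonzero spectra of $\mr N\mr N^{T}$ and $\mr N^{T}\mr N$, and the degree formula for line graphs). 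You are also right to flag that part (1) as literally stated fails for edgeless graphs (where $\rho_{\mr{max}}=0=\Delta$); the paper glosses over this, but the lemma is only ever invoked for nice graphs, which have edges, so nothing breaks.
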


\subsection{Combinatorial interpretation of the spectral moments}\label{subsec:spectral-moments}

As briefly mentioned in \cref{sec:overview}, in this paper we use the term \emph{spectral
moments} to refer to sums of powers of eigenvalues.
\begin{definition}
For a matrix $M\in\mb R^{n\times n}$ with spectrum $\sigma$, the \emph{$s$-th
spectral moment} of $M$ is
\[
\sum_{\lambda\in\sigma}\lambda^{s}=\on{trace}(M^{s})=\sum_{i_{1}=1}^{n}\dots\sum_{i_{s}=1}^{n}M_{i_{1},i_{2}}M_{i_{2},i_{3}}M_{i_{3},i_{4}}\dots M_{i_{s-1},i_{s}}M_{i_{s},i_{1}}.
\]
\end{definition}

If $M$ is the adjacency matrix of a graph $G$, then the product $M_{i_{1},i_{2}}M_{i_{2},i_{3}}M_{i_{3},i_{4}}\dots M_{i_{s-1},i_{s}}M_{i_{s},i_{1}}$
is nonzero if and only if there is a \emph{closed walk} in $G$ running
through the vertices indexed by $i_{1},\dots,i_{s}$ (in which case
this product is exactly 1). So, spectral moments simply count closed
walks of various lengths. For example, the second spectral moment
is the number of closed walks of length 2, which is precisely twice
the number of edges in $G$ (a closed walk of length 2 simply runs
back and forth along an edge, starting at one of its two endpoints).

In our proof of \cref{lem:nice-LDS} we will need to carefully study \emph{Laplacian}
spectral moments, which can also be interpreted in combinatorial terms (albeit in a more complicated way):
\begin{definition}\label{def:routes}
An \emph{$s$-route} in a graph is a sequence of vertices $\vec{v}=(v_{1},\dots,v_{s})$,
such that for each index $j$, either $v_{j}v_{j+1}$ is an edge or
$v_{j}=v_{j+1}$ (where the subscripts should be interpreted modulo
$s$). That is to say, a route consists of a sequence of $s$ steps:
at each step we may either walk along an edge or wait at the current
vertex. Letting $t$ be the number of ``waiting steps'' in the $s$-route
$\vec{v}$, we also define $w(\vec{v})$ to be the product of $\deg(v_{j})$
over all waiting steps $j$, times $(-1)^{s-t}$.
\end{definition}

\begin{fact}
\label{fact:L-walks}For any graph $G$, let $\mathcal{R}_{s}$ be
the set of all $s$-routes in $G$. Then, the $s$-th spectral moments
of $\mr{L}(G)$ and $|\mr{L}(G)|$ are
\[
\sum_{\vec{v}\in\mathcal{R}_{s}}w(\vec{v})\quad\text{and}\quad\sum_{\vec{v}\in\mathcal{R}_{s}}|w(\vec{v})|,
\]
respectively. %If $G$ is bipartite, then $w(\vec{v})=|w(\vec{v})|$
%for every $\vec{v}\in\mathcal{R}_{s}$ (i.e., we do not need to worry
%about the ``$(-1)^{s-t}$'' factor in the definition of $w(\vec{v})$).
\end{fact}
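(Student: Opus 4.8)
The plan is to expand both spectral moments directly from the trace formula and identify the surviving terms with routes. Recall that for a matrix $M\in\mb R^{n\times n}$ the $s$-th spectral moment equals
\[
\on{trace}(M^{s})=\sum_{i_{1}=1}^{n}\dots\sum_{i_{s}=1}^{n}M_{i_{1},i_{2}}M_{i_{2},i_{3}}\dots M_{i_{s-1},i_{s}}M_{i_{s},i_{1}},
\]
where from now on the subscripts on the $i_{j}$ are read modulo $s$. For $M=\mr L(G)=\mr D(G)-\mr A(G)$, the $(i,j)$-entry is $\deg(v_{i})$ if $i=j$, is $-1$ if $v_{i}v_{j}\in E$, and is $0$ otherwise; for $M=|\mr L(G)|=\mr D(G)+\mr A(G)$ the only change is that off-diagonal entries corresponding to edges are $+1$ instead of $-1$.

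Next I would set up the bijection between index tuples and vertex sequences. Since the vertices of $G$ are labelled $v_{1},\dots,v_{n}$, a tuple $(i_{1},\dots,i_{s})\in\{1,\dots,n\}^{s}$ corresponds to the sequence $\vec v=(v_{i_{1}},\dots,v_{i_{s}})$ of $s$ vertices. If $\vec v$ is \emph{not} an $s$-route, then for some index $j$ we have $i_{j}\neq i_{j+1}$ and $v_{i_{j}}v_{i_{j+1}}\notin E$, so $M_{i_{j},i_{j+1}}=0$ and the corresponding term in the trace expansion vanishes. Hence only terms coming from $s$-routes contribute, and it remains to evaluate the term attached to a given $s$-route $\vec v$. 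Let $W=\{j:i_{j}=i_{j+1}\}$ be the set of waiting steps and $t=|W|$. For $j\in W$ the factor $M_{i_{j},i_{j+1}}$ equals $\deg(v_{i_{j}})=\deg(v_{j})$ (this is correctly $0$ if $v_{j}$ is isolated, matching the fact that $w(\vec v)=0$ in that case); for the remaining $s-t$ indices the factor equals $-1$ in the Laplacian case and $+1$ in the signless case. Multiplying, the term equals $(-1)^{s-t}\prod_{j\in W}\deg(v_{j})=w(\vec v)$ for $\mr L(G)$, and $\prod_{j\in W}\deg(v_{j})=|w(\vec v)|$ for $|\mr L(G)|$, the last equality because all degrees are nonnegative.

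Summing over all $s$-routes then gives $\on{trace}(\mr L(G)^{s})=\sum_{\vec v\in\mathcal R_{s}}w(\vec v)$ and $\on{trace}(|\mr L(G)|^{s})=\sum_{\vec v\in\mathcal R_{s}}|w(\vec v)|$, as claimed. There is no real obstacle here: the statement is a direct bookkeeping exercise once the entries of $\mr L(G)$ and $|\mr L(G)|$ are written out. The only point requiring a moment of care is the degenerate case of waiting at an isolated vertex, and one checks immediately that it contributes $0$ on both sides of each identity.
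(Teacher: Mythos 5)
Your proof is correct and is exactly the intended argument: the paper states this as a Fact without proof, immediately after carrying out the same trace-expansion bookkeeping for the adjacency matrix, and your direct expansion of $\on{trace}(M^{s})$ with $M=\mr D(G)\mp\mr A(G)$ (identifying nonvanishing index tuples with routes and reading off the sign $(-1)^{s-t}$ from the walking steps) is precisely that argument made explicit. The side remark about isolated vertices is a nice touch but, as you note, both sides vanish there, so nothing more is needed.
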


We will repeatedly use \cref{fact:L-walks} (for many different $s$)
in our proof of \cref{lem:nice-LDS}. For now, we just record some simple observations
for $s\le3$, which can be straightforwardly proved by considering
all possible cases for a route of length $s$.
\begin{proposition}\label{prop:degree-moments}
Consider any graph $G$ with $n$ vertices and $m$ edges, and write
$V$ for its set of vertices. Let $M=\mr{L}(G)$ or $M=|\mr{L}(G)|$, and let
$\mu_{s}$ be the $s$-th spectral moment of $M$. Then
\begin{enumerate}
\item $\mu_{0}=n$;
\item ${\displaystyle \mu_{1}=\sum_{v\in V}\deg(v)=2m}$;
\item ${\displaystyle \mu_{2}=\sum_{v\in V}\deg(v)^{2}+2m}$;
\item If $G$ has no triangles, then ${\displaystyle \mu_{3}=\sum_{v\in V}\deg(v)^{3}+3\sum_{v\in V}\deg(v)^{2}}$.
\end{enumerate}
In particular, if we know that $G$ has no triangles, then the spectrum of $M$
is enough information to determine $\sum_{v\in V}\deg(v)^{s}$ for
$s\in\{0,1,2,3\}$.
\end{proposition}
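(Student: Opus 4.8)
The plan is to prove \cref{prop:degree-moments} by a direct enumeration of $s$-routes for each small value of $s$, using \cref{fact:L-walks}. The starting point is the observation that an $s$-route $\vec{v}=(v_1,\dots,v_s)$ is a cyclic sequence in which each consecutive pair (indices mod $s$) is either an edge or a repeated vertex (a ``wait''), and its weight $w(\vec v)$ is $(-1)^{s-t}$ times the product of the degrees over the waiting steps, where $t$ is the number of waits. Since the claims are about $\mr{L}(G)$ and $|\mr{L}(G)|$ simultaneously, and these differ only in whether we take $\sum w(\vec v)$ or $\sum |w(\vec v)|$, I would note at the outset that for $s\le 3$ every route has an \emph{even} number of non-waiting (edge) steps in the cases that actually contribute, so $(-1)^{s-t}$ will work out to $+1$ in each surviving term; hence the Laplacian and signless Laplacian moments coincide for $s\le 3$ and a single case analysis handles both. (Alternatively, one simply observes $|w(\vec v)|$ is the product of degrees over waits, and checks the sign separately.)

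I would then go case by case. For $s=0$ the empty route contributes $1$ and there are $n$ of them in the sense that $\mu_0 = \on{trace}(M^0) = n$; this is immediate. For $s=1$, a $1$-route is a single vertex $v$ with the one step being a wait (since $v_1 v_1$ is not an edge), contributing $\deg(v)$; summing gives $\sum_v \deg(v) = 2m$. For $s=2$, a $2$-route $(v_1,v_2)$ has each of its two steps either an edge or a wait; the possibilities are: both steps waits (forces $v_1=v_2$, weight $\deg(v_1)^2$, contributing $\sum_v \deg(v)^2$), or both steps edges (forces $v_1 v_2 \in E$ traversed back and forth, weight $(-1)^{2-0}=1$, and there are $2m$ ordered such routes). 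A ``one wait, one edge'' route is impossible since a wait forces $v_1=v_2$ while an edge forces $v_1\ne v_2$. Hence $\mu_2 = \sum_v \deg(v)^2 + 2m$.

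The case $s=3$ is the only one requiring the triangle-free hypothesis, and I expect it to be the main (though still routine) obstacle. A $3$-route $(v_1,v_2,v_3)$ has three cyclic steps, each an edge or a wait. Enumerate by the number of waits: (i) three waits forces $v_1=v_2=v_3$, weight $\deg(v_1)^3$, total $\sum_v \deg(v)^3$; (ii) two waits and one edge is impossible for parity/consistency reasons (two waits force all three vertices equal, contradicting the one edge step); (iii) one wait and two edges: the wait forces two consecutive vertices equal, say $v_2=v_3$, and then the other two steps are edges $v_1 v_2$ and $v_1 v_2$ again — this is a walk that goes along an edge, waits, and comes back, weight $(-1)^{3-1}\deg(v_2) = \deg(v_2)$; counting placements of the wait (3 choices of which cyclic step) and the edge gives $3\sum_{v}\deg(v)^2$ (each edge $uv$ contributes $\deg(v)$ when the wait is at $v$ and $\deg(u)$ when at $u$, with the third index pinned, times $3$ for the rotational position — I'd verify the bookkeeping gives exactly the factor $3$); (iv) zero waits, three edges: this is a genuine closed walk of length $3$, i.e.\ a triangle, and the weight is $(-1)^{3}=-1$ — but the triangle-free assumption kills this case entirely. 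Summing (i)+(iii) gives $\mu_3 = \sum_v \deg(v)^3 + 3\sum_v \deg(v)^2$. Finally, the ``in particular'' clause follows by solving the triangular linear system: $\mu_0$ gives $n=\sum_v \deg(v)^0$, $\mu_1$ gives $\sum_v\deg(v)$, then $\mu_2 - \mu_1 = \sum_v\deg(v)^2$, and $\mu_3 - 3(\mu_2-\mu_1) = \sum_v \deg(v)^3$, so all four power sums are spectrally determined when $G$ is triangle-free.
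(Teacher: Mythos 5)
Your proof is correct and is exactly the argument the paper has in mind: the paper gives no written proof of \cref{prop:degree-moments}, stating only that it ``can be straightforwardly proved by considering all possible cases for a route of length $s$'' via \cref{fact:L-walks}, which is precisely your case analysis. Your enumeration for $s\le 3$ (including the observation that all surviving routes have an even number of edge-steps, so the Laplacian and signless Laplacian moments coincide, and that the only sign-sensitive case is the triangle routes excluded by hypothesis) is complete and correct.
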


\subsection{Combinatorial interpretation of the characteristic coefficients}

In addition to spectral moments, another very rich way to extract
combinatorial structure from the spectrum is to consider the \emph{coefficients
of the characteristic polynomial} of our matrix of interest. 
\begin{definition}
Consider a matrix $M\in\mb R^{n\times n}$ with spectrum $\sigma$,
and write its characteristic polynomial $\det(xI-M)=\prod_{\lambda\in\sigma}(x-\lambda)\in\mb R[x]$
in the form $\sum_{i=0}^{n}(-1)^{i}\zeta_{i}x^{n-i}$. Then, we define
the \emph{$i$-th characteristic coefficient} to be
\[
\zeta_{i}=\sum_{\Lambda\subseteq\sigma:|\Lambda|=i}\;\prod_{\lambda\in\Lambda}\lambda.
\]
(here we have used Vieta's formulas for the coefficients of a polynomial
in terms of its roots).
\end{definition}

Note that the $n$-th characteristic coefficient $\zeta_n$ is the determinant
of $M$. More generally, if we consider the largest $s$ for which $\zeta_s$ is nonzero, then $\zeta_s$ is the product of nonzero eigenvalues of $M$. Recalling the definition $\det(xI-M)$ of the
characteristic polynomial, we also have the following observation.
\begin{fact}\label{fact:integer-coefficient}
If $M$ is an integer matrix, then its characteristic coefficients
are all integers.
\end{fact}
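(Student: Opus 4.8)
The plan is to observe that the determinant of any square matrix is a polynomial function of its entries with integer coefficients, and then to apply this observation to the matrix $xI-M$.

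First I would recall the Leibniz expansion $\det(A)=\sum_{\pi\in S_n}\on{sgn}(\pi)\prod_{i=1}^{n}A_{i,\pi(i)}$, which is valid over any commutative ring. Taking $A=xI-M$, the $(i,j)$-entry of $A$ is $x\cdot\one[i=j]-M_{i,j}$, and since $M$ is an integer matrix, each such entry lies in the polynomial ring $\mb Z[x]$. Consequently $\det(xI-M)$, being a signed sum of products of entries of $A$, is itself an element of $\mb Z[x]$, i.e.\ a polynomial in $x$ all of whose coefficients are integers.

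Then I would simply match coefficients with the definition. We wrote $\det(xI-M)=\sum_{i=0}^{n}(-1)^{i}\zeta_i x^{n-i}$, so the coefficient of $x^{n-i}$ equals $(-1)^{i}\zeta_i$. Since this coefficient is an integer and $(-1)^{i}=\pm1$, it follows that $\zeta_i\in\mb Z$ for every $i$, which is exactly the assertion.

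The main obstacle here is essentially nonexistent: the statement is immediate once one invokes the fact that the determinant is a polynomial map with integer coefficients. The only point requiring a modicum of care is that we are expanding the determinant over the ring $\mb Z[x]$ rather than over a field; but the Leibniz formula involves no division, so this causes no difficulty, and one could alternatively argue by induction on $n$ via cofactor expansion.
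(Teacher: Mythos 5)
Your proof is correct and follows exactly the reasoning the paper leaves implicit: the characteristic polynomial $\det(xI-M)$ lies in $\mb Z[x]$ because the determinant is an integer polynomial in the entries, and the characteristic coefficients are (up to sign) its coefficients. The paper states this as an immediate observation without proof, so your Leibniz-expansion argument is simply a careful write-up of the same idea.
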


Now, the characteristic coefficients of the Laplacian, signless Laplacian
and adjacency matrices all have different combinatorial interpretations, as follows.
\begin{definition}\label{def:subgraph-types}
A connected graph is \emph{unicyclic} if it has exactly one cycle
(equivalently, if it has the same number of edges as vertices). If
the length of this cycle is even it is \emph{even-unicyclic}; otherwise
it is \emph{odd-unicyclic}. Now, consider any graph $G$.
\begin{enumerate}
\item A \emph{spanning forest} $F$ in $G$ is a subgraph of $G$ which
is spanning (i.e., contains all the vertices of $G$) and whose connected
components are trees\footnote{Some authors define a spanning forest of $G$ to have the same number of conected components as $G$. Here we have no such requirement.}. Let $\alpha(F)$ be the product of the numbers
of vertices in these trees.
\item A \emph{TU-subgraph} $H$ of $G$ is a spanning subgraph whose connected
components are trees or odd-unicyclic. Generalising the definition
of $\alpha$ above, let $\alpha(H)=4^{c}\prod_{i=1}^{s}n_{s}$, where
$c$ is the number of odd-unicyclic components in $H$, and the numbers
of vertices in the tree components are $n_{1},\dots,n_{s}$.
\item An \emph{elementary subgraph} $X$ of $G$ is a (not necessarily spanning)
subgraph whose connected components are cycles and individual edges.
Let $\beta(X)=(-1)^{c}(-2)^{d}$, where $c$ and $d$ are the number of
edge-components and cycle-components in $X$, respectively.
\end{enumerate}
Let $\Phi_{i}(G)$, $\Psi_{i}(G)$ and $\Xi_{i}(G)$ be the sets of
spanning forests with $i$ edges, TU-subgraphs with $i$ edges, and
elementary subgraphs with $i$ vertices, respectively, in $G$.
\end{definition}

\begin{theorem}[{\cite[Theorem~7.5]{Big93}, \cite[Theorem~4.4]{CRS07} and \cite[Theorem~3]{Har62}}]
\label{thm:characteristic-coefficients-description}For any graph
$G$, the $i$-th characteristic coefficients of $\mr{L}(G)$, $|\mr{L}(G)|$
and $\mr{A}(G)$ are 
\[
\sum_{F\in\Phi_{i}(G)}\alpha(F),\quad\sum_{H\in\Psi_{i}(G)}\alpha(H)\text{ and}\quad\sum_{X\in\Phi_{i}(G)}(-1)^i\beta(X),
\]
respectively.
\end{theorem}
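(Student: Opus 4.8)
All three identities are classical: they are usually attributed to Sachs for the adjacency matrix, to the all-minors form of Kirchhoff's matrix-tree theorem for the Laplacian, and to Cvetkovi\'c--Rowlinson--Simi\'c for the signless Laplacian, so the plan is to give a single unified derivation (one could alternatively just invoke \cite{Big93,CRS07,Har62} for the three statements separately). The common starting point is the purely linear-algebraic fact that, for any $M\in\mb R^{n\times n}$, the characteristic coefficient $\zeta_i$ equals the sum of the $i\times i$ principal minors of $M$,
\[
\zeta_i=\sum_{S\subseteq[n],\,|S|=i}\det(M[S])
\]
(expand $\det(xI-M)$ and compare with the definition of $\zeta_i$). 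Thus it suffices to evaluate each principal minor $\det(M[S])$ combinatorially and then sum over $S$.

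For $M=\mr A(G)$ I would expand $\det(\mr A(G)[S])$ by the Leibniz formula and decompose each permutation of $S$ into cycles. Since $\mr A(G)$ has zero diagonal, only fixed-point-free permutations survive, and a cycle $(v_1\,\cdots\,v_k)$ contributes a nonzero factor only when $v_1v_2,\dots,v_kv_1$ are all edges of $G$: a transposition then corresponds to an edge of $G$, and a cycle of length $k\ge 3$ to a $k$-cycle of $G$ (realised by two permutations, one for each direction). Grouping the surviving permutations by the resulting elementary subgraph $X$ with $V(X)=S$, and using that a $k$-cycle has sign $(-1)^{k-1}$ together with the parity identity $|S|\equiv 2c+\sum(\text{cycle lengths})\pmod 2$ (where $c$ is the number of edge-components of $X$), one gets $\det(\mr A(G)[S])=(-1)^{|S|}\sum_{X:\,V(X)=S}\beta(X)$. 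Summing over all $S$ with $|S|=i$ yields $\zeta_i=\sum_{X\in\Xi_i(G)}(-1)^{i}\beta(X)$.

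For $M=\mr L(G)$ and $M=|\mr L(G)|$ I would use the factorisations $\mr L(G)=NN^{\top}$ with $N$ the oriented vertex--edge incidence matrix, and $|\mr L(G)|=RR^{\top}$ with $R$ the unsigned incidence matrix, so that $M[S]=N_SN_S^{\top}$ (respectively $R_SR_S^{\top}$) for $N_S,R_S$ the submatrices on the rows indexed by $S$. By the Cauchy--Binet formula, $\det(M[S])=\sum_{T\subseteq E,\,|T|=i}\det\bigl(N_S[\,\cdot\,,T]\bigr)^{2}$ (respectively with $R$), so everything reduces to the standard determination of incidence-matrix minors: for the oriented matrix, $\det(N_S[\,\cdot\,,T])=\pm1$ precisely when the edge set $T$ spans a forest with $i$ edges each of whose tree components contains exactly one vertex of $[n]\setminus S$ (and the minor is $0$ otherwise); for the unsigned matrix the same holds, except that a component may instead be an odd-unicyclic subgraph vertex-disjoint from $[n]\setminus S$, in which case the corresponding minor is $\pm2$. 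Consequently $\det(\mr L(G)[S])$ counts the spanning forests $F$ with $i$ edges for which $[n]\setminus S$ is a transversal of the set of tree components of $F$, and $\det(|\mr L(G)|[S])$ counts the TU-subgraphs $H$ with $i$ edges with the analogous transversal property, each odd-unicyclic component contributing a weight $(\pm2)^{2}=4$. Summing over $S$ (equivalently over all $(n-i)$-element sets $[n]\setminus S$), a spanning forest $F$ with tree components of sizes $n_1,\dots,n_s$ is counted once for each choice of one vertex from each tree, i.e.\ $\prod_j n_j=\alpha(F)$ times, and a TU-subgraph $H$ with $c$ odd-unicyclic components is counted $4^{c}\prod_j n_j=\alpha(H)$ times; this gives the remaining two identities.

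The conceptual content is entirely standard; the only work is bookkeeping, and I expect the delicate point to be the last step — verifying the exact list of incidence submatrices with nonzero determinant and, in particular, that the unsigned incidence matrix of a cycle has determinant $0$ when the cycle is even and $\pm2$ when it is odd (which can be seen by peeling off pendant edges until only the cycle remains), and then checking that the resulting weights $\prod_j n_j$ and $4^{c}\prod_j n_j$ match the definitions of $\alpha$ on forests and on TU-subgraphs respectively. For the adjacency case the only subtlety is the sign accounting, which is routine once the parity identity above is in hand.
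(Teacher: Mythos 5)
Your proposal is correct. Note, though, that the paper does not prove this statement at all: it is quoted with attributions to Biggs, to Cvetkovi\'c--Rowlinson--Simi\'c, and to Harary, so there is no in-paper argument to compare against. What you have written is the standard self-contained derivation, and all the key computations check out: the identity $\zeta_i=\sum_{|S|=i}\det(M[S])$; the Leibniz/cycle-decomposition argument for $\mr{A}(G)$, where an elementary subgraph with $c$ edge-components and $d$ cycle-components of lengths $k_1,\dots,k_d$ is realised by $2^d$ fixed-point-free permutations of total sign $(-1)^c\prod_j(-1)^{k_j-1}$, giving $\det(\mr{A}(G)[S])=(-1)^{|S|}\sum_{X:V(X)=S}\beta(X)$ as you claim; and the Cauchy--Binet reduction for $\mr{L}(G)=NN^{\top}$ and $|\mr{L}(G)|=RR^{\top}$ to minors of the (un)signed incidence matrix, with the transversal count over complements $[n]\setminus S$ producing exactly $\alpha(F)=\prod_j n_j$ and $\alpha(H)=4^c\prod_j n_j$ (the factor $4^c$ coming from $(\pm2)^2$ per odd-unicyclic component). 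You also implicitly correct a typo in the statement as printed: the third sum should run over $\Xi_i(G)$ (elementary subgraphs with $i$ vertices), not $\Phi_i(G)$, which is how you treat it. The one place where you defer to a "standard" fact --- the exact classification of nonsingular square submatrices of the incidence matrices, including that a unicyclic block must lie entirely inside $S$ and contributes $\pm2$ or $0$ according to the parity of its cycle --- is correctly identified as the delicate point and your suggested verification (block-diagonalise over components, peel pendant edges) does go through; spelling it out would make the proof fully rigorous, but as a proof of a classical cited theorem the sketch is sound as written.
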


An immediate corollary (in the Laplacian case, considering the $n$-th and $(n-1)$-th characteristic coefficients) is Kirchhoff's celebrated \emph{matrix-tree theorem}, as follows.

\begin{theorem}[\cite{Kir47}]\label{thm:kirchhoff}
    For any $n$-vertex graph $G$, the Laplacian $\mr{L}(G)$ has a zero eigenvalue with multiplicity at least 1. $G$ is connected if and only if the multiplicity of the zero eigenvalue is exactly 1, in which case\footnote{One does not really need to make a connectedness case distinction here. Indeed, the matrix-tree theorem can be formulated as the statement that the number of spanning trees is equal to any cofactor of the Laplacian matrix (this number may be zero).} the number of spanning trees in $G$ is precisely the product of the nonzero eigenvalues divided by $n$.
\end{theorem}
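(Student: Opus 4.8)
The plan is to deduce everything directly from \cref{thm:characteristic-coefficients-description} applied to $\mr{L}(G)$, combined with elementary bookkeeping on spanning forests. First I would note that $\mr{L}(G)\mathbf{1}=0$, since every row of $\mr{L}(G)$ sums to $\deg(v)-\deg(v)=0$; hence $0$ is always an eigenvalue of $\mr{L}(G)$, with multiplicity at least $1$.

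Next I would pin down the multiplicity exactly. By \cref{thm:characteristic-coefficients-description}, the $i$-th characteristic coefficient of $\mr{L}(G)$ is $\zeta_i=\sum_{F\in\Phi_i(G)}\alpha(F)$, and by definition each $\alpha(F)$ is a product of positive integers, hence strictly positive. So $\zeta_i>0$ whenever $\Phi_i(G)\neq\emptyset$, and $\zeta_i=0$ otherwise. Now, a spanning forest of $G$ with $i$ edges has exactly $n-i$ connected components, and the number of components of any spanning forest is at least the number $c$ of connected components of $G$ (each forest component is contained in a component of $G$); conversely, choosing a spanning tree inside each component of $G$ yields a spanning forest with exactly $n-c$ edges, and deleting edges from it gives spanning forests of all smaller sizes. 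Therefore $\Phi_i(G)\neq\emptyset$ precisely for $0\le i\le n-c$, so the largest index $s$ with $\zeta_s\neq0$ is $s=n-c$. Since the characteristic coefficients are the elementary symmetric functions of the eigenvalues, this largest index equals the number of nonzero eigenvalues (any product of more than that many eigenvalues must include a zero, while the product of exactly the nonzero eigenvalues is a nonzero real). Hence the multiplicity of the zero eigenvalue is $n-(n-c)=c$, which equals $1$ if and only if $G$ is connected.

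Finally, suppose $G$ is connected, so $c=1$ and, by the previous paragraph, $\zeta_{n-1}$ is exactly the product of the nonzero eigenvalues of $\mr{L}(G)$. A spanning forest of $G$ with $n-1$ edges has a single component spanning all $n$ vertices, i.e.\ it is a spanning tree, and for such an $F$ we have $\alpha(F)=n$. Thus $\zeta_{n-1}=n\cdot(\text{number of spanning trees of }G)$, and dividing by $n$ gives the claimed formula.

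Since the statement is essentially a repackaging of \cref{thm:characteristic-coefficients-description}, I do not expect any serious obstacle. The only points that require a little care are the edge count identifying $n-c$ as the top nonvanishing index, and the elementary-symmetric-function remark converting ``top nonvanishing coefficient'' into both ``multiplicity of the zero eigenvalue'' and ``product of the nonzero eigenvalues''.
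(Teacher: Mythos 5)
Your proof is correct and follows essentially the route the paper intends: the paper derives \cref{thm:kirchhoff} as an immediate corollary of \cref{thm:characteristic-coefficients-description}, by looking at the $n$-th and $(n-1)$-th characteristic coefficients of $\mr{L}(G)$, which is exactly what you do (you additionally identify the zero-eigenvalue multiplicity as the number of components, a harmless strengthening). All the supporting steps — the edge/component count for spanning forests, the positivity of $\alpha(F)$, and the elementary-symmetric-function argument identifying the top nonvanishing coefficient with the product of nonzero eigenvalues — check out.
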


Another corollary is as follows. (A very similar observation appears as \cite[Proposition~2.1]{CRS07}).

\begin{proposition}\label{thm:SL-bipartite}
For any connected graph $G$:
\begin{enumerate}
    \item If $G$ is bipartite, then $|\mr{L}(G)|$ has a zero eigenvalue with multiplicity 1.
    \item If $G$ is not bipartite, then the determinant of $|\mr{L}(G)|$ is a positive integer divisible by 4.
\end{enumerate}
\end{proposition}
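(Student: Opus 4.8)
The plan is to derive both parts from the combinatorial description of the characteristic coefficients of $|\mr{L}(G)|$ in \cref{thm:characteristic-coefficients-description}, together with \cref{thm:kirchhoff} for the Laplacian null space. Recall that for an $n$-vertex graph the $n$-th characteristic coefficient $\zeta_n$ equals the determinant, and $\zeta_n=\sum_{H\in\Psi_n(G)}\alpha(H)$ where $\Psi_n(G)$ is the set of TU-subgraphs of $G$ with exactly $n$ edges.

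For part (1) I would simply combine \cref{fact:bipartite-L-SL} with \cref{thm:kirchhoff}: if $G$ is bipartite then its signless Laplacian spectrum coincides with its Laplacian spectrum, and since $G$ is connected, \cref{thm:kirchhoff} says $\mr{L}(G)$ has a zero eigenvalue of multiplicity exactly $1$. (If one prefers an argument uniform with part (2): a bipartite graph has no odd-unicyclic subgraph, so $\Psi_n(G)=\emptyset$ and $\zeta_n=\det(|\mr{L}(G)|)=0$; on the other hand a TU-subgraph with $n-1$ edges must, by the edge count below, be a spanning tree, so $\zeta_{n-1}=\sum_{T}\alpha(T)=n\cdot\tau(G)>0$ where $\tau(G)\ge 1$ is the number of spanning trees. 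Since the largest index $i$ with $\zeta_i\ne 0$ equals the number of nonzero eigenvalues, this forces the zero eigenvalue to have multiplicity exactly $1$.)

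For part (2), the key observation is an edge count: a spanning subgraph of $G$ all of whose components are trees or odd-unicyclic has exactly $n-(\text{number of tree components})$ edges, since a tree on $t$ vertices has $t-1$ edges and an odd-unicyclic graph on $t$ vertices has $t$ edges. Hence an $H\in\Psi_n(G)$ has no tree components, so by \cref{def:subgraph-types} we have $\alpha(H)=4^{c(H)}$ with $c(H)\ge 1$ its number of components; in particular every term of $\sum_{H\in\Psi_n(G)}\alpha(H)$ is a positive integer divisible by $4$. It remains to show $\Psi_n(G)\ne\emptyset$. For this, take any spanning tree $T$ of $G$ (which exists as $G$ is connected); since $G$ is not bipartite, some non-tree edge $e$ closes an odd cycle with $T$ (otherwise a proper $2$-colouring of $T$ would extend to all of $G$, using that each non-tree edge joins vertices at even tree-distance). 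Then $T+e$ is a connected spanning subgraph with exactly one cycle, which is odd; that is, $T+e\in\Psi_n(G)$. Therefore $\det(|\mr{L}(G)|)$ is a nonempty sum of positive multiples of $4$, proving the claim.

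I do not anticipate a serious obstacle: the proof is short and uses only tools already in hand. The only points requiring a little care are the edge-counting identity that identifies exactly which TU-subgraphs contribute to $\zeta_n$ (and to $\zeta_{n-1}$ in the alternative argument for part (1)), and the routine verification that a connected non-bipartite graph contains a spanning odd-unicyclic subgraph; both are straightforward.
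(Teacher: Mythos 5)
Your proposal is correct and follows essentially the same route as the paper: both parts are read off from the TU-subgraph description of the characteristic coefficients in \cref{thm:characteristic-coefficients-description}, with the multiplicity of the zero eigenvalue controlled via the nonvanishing $(n-1)$-th coefficient coming from spanning trees (your primary route for part (1) via \cref{fact:bipartite-L-SL} and \cref{thm:kirchhoff} is an equally valid shortcut). If anything, you are slightly more careful than the paper in observing that a TU-subgraph with $n$ edges may consist of \emph{several} odd-unicyclic components, contributing $4^{c}$ with $c\ge 1$, which still gives divisibility by $4$.
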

\begin{proof}
Let $n$ be the number of vertices of $G$, so the determinant of $|\mr{L}(G)|$ (i.e., its product of eigenvalues) is its $n$-th characteristic coefficient. Note that a tree on at most $n$ vertices has at most $n-1$ edges, so in the description in \cref{thm:characteristic-coefficients-description}, the only possible contributions to the $n$-th characteristic coefficient of $|\mr{L}(G)|$ come from spanning odd-unicyclic subgraphs. If $G$ is bipartite, then clearly there is no such subgraph. On the other hand, if $G$ is not bipartite then it has an odd cycle, and a suitable spanning odd-unicyclic subgraph can be found by iteratively removing edges outside this cycle. Each spanning odd-unicylic subgraph $H$ has $\alpha(H)=4$.

Since every connected graph has a spanning tree, the $(n-1)$-th characteristic coefficient of $G$ is always nonzero (so zero can never be an eigenvalue with multiplicity more than 1).
\end{proof}

%We remark that \cref{thm:SL-bipartite} is very similar to \cite[Proposition~2.1]{CRS07}
    
% In particular, \cref{thm:characteristic-coefficients-description}
% implies that the number of spanning trees of a graph can be determined
% from its Laplacian spectrum (as the $(n-1)$-th characteristic coefficient).
% This is known as \emph{Kirchhoff's matrix-tree theorem}.

\subsection{Line graphs}

In \cref{sec:overview} we mentioned a correspondence between the Laplacian spectrum of a graph $G$ and the adjacency spectrum of its line graph $\on{line}(G)$. To elaborate on this: for a graph with vertices $v_1,\dots,v_n$ and edges $e_1,\dots,e_m$, consider the \emph{incidence matrix} $\mr{N}(G)\in \mb \{0,1\}^{n\times m}$, where the $(i,j)$-entry is 1 if and only if $v_i\in e_j$. Then, it is not hard to see that $|\mr{L}(G)|=\mr{N}(G) \mr{N}(G)^T$ and $\mr{A}(\on{line}(G))=\mr{N}(G)^T\mr{N}(G)-2I$. Since the nonzero eigenvalues of $\mr{N}(G) \mr{N}(G)^T$ are the same as the nonzero eigenvalues of $\mr{N}(G)^T\mr{N}(G)$ (including multiplicities), we have the following.
\begin{proposition}\label{prop:A-SL-line-graph}
Consider any graph $G$ and any $\lambda \ne 0$. Then, $\lambda$ is an eigenvalue of $|\mr{L}(G)|$ with multiplicity $m$ if and only if $\lambda-2$ is an eigenvalue of $\mr{A}(\on{line}(G))$ with multiplicity $m$.
\end{proposition}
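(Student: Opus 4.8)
The plan is to reduce the statement to the standard linear-algebra fact that for a rectangular matrix $\mr{N}$, the Gram matrices $\mr{N}\mr{N}^T$ and $\mr{N}^T\mr{N}$ have the same nonzero eigenvalues with the same multiplicities. First I would record the two matrix identities alluded to above: writing $\mr{N}=\mr{N}(G)$ for the $n\times m$ vertex--edge incidence matrix, a direct entrywise computation gives $\mr{N}\mr{N}^T=|\mr{L}(G)|$ (the $(i,i)$-entry of $\mr{N}\mr{N}^T$ counts the edges incident to $v_i$, i.e.\ it is $\deg(v_i)$, and for $i\ne j$ the $(i,j)$-entry counts the edges joining $v_i$ and $v_j$, which is $\mr{A}(G)_{ij}$ since $G$ is simple) and $\mr{N}^T\mr{N}=\mr{A}(\on{line}(G))+2I$ (the $(e,e)$-entry is $2$ because $e$ has exactly two endpoints, and for $e\ne f$ the $(e,f)$-entry is $1$ precisely when $e$ and $f$ share an endpoint, i.e.\ when they are adjacent in $\on{line}(G)$).

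Next I would invoke, or quickly prove, the Gram-matrix fact. If $\mr{N}\mr{N}^T v=\lambda v$ with $\lambda\ne 0$ and $v\ne 0$, then $\mr{N}^T v\ne 0$ (otherwise $\lambda v=\mr{N}\mr{N}^T v=0$ forces $v=0$) and $\mr{N}^T\mr{N}(\mr{N}^T v)=\mr{N}^T(\mr{N}\mr{N}^T v)=\lambda(\mr{N}^T v)$; moreover the linear map $v\mapsto \mr{N}^T v$ restricted to the $\lambda$-eigenspace of $\mr{N}\mr{N}^T$ is injective by the same computation. Hence $\dim\ker(\mr{N}\mr{N}^T-\lambda I)\le\dim\ker(\mr{N}^T\mr{N}-\lambda I)$, and the symmetric argument with the roles of $\mr{N}$ and $\mr{N}^T$ interchanged gives the reverse inequality, so the two $\lambda$-eigenspaces have equal dimension. (Alternatively, one can simply cite the singular value decomposition of $\mr{N}$.)

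Combining these, for $\lambda\ne 0$ we have: $\lambda$ is an eigenvalue of $|\mr{L}(G)|=\mr{N}\mr{N}^T$ with multiplicity $m$ $\iff$ $\lambda$ is an eigenvalue of $\mr{N}^T\mr{N}$ with multiplicity $m$ $\iff$ $\lambda-2$ is an eigenvalue of $\mr{N}^T\mr{N}-2I=\mr{A}(\on{line}(G))$ with multiplicity $m$, which is exactly the claim. There is no genuine obstacle here; the only point that needs a word of care is that the Gram-matrix correspondence preserves \emph{multiplicities}, not merely the set of nonzero eigenvalues, which the injectivity argument above is designed to handle.
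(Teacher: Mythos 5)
Your proposal is correct and follows essentially the same route as the paper: the paper also factors $|\mr{L}(G)|=\mr{N}\mr{N}^T$ and $\mr{A}(\on{line}(G))=\mr{N}^T\mr{N}-2I$ via the incidence matrix and then invokes the fact that $\mr{N}\mr{N}^T$ and $\mr{N}^T\mr{N}$ share their nonzero eigenvalues with multiplicities. Your write-up simply supplies the injectivity argument for that last fact in more detail than the paper does.
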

If we know the signless Laplacian spectrum of a graph $G$, then \cref{prop:A-SL-line-graph} tells us the spectrum of $\mr{A}(\on{line}(G))$, except the multiplicity of the eigenvalue $-2$. In order to determine this multiplicity we just need to know the sum of multiplicities of all eigenvalues of $\on{line}(G)$, i.e., the number of vertices of $\on{line}(G)$, i.e., the number of edges of $G$. We have already seen that this information can be recovered from the signless Laplacian spectrum (\cref{prop:degree-moments}(2)). So, the signless Laplacian spectrum of $G$ fully determines the adjacency spectrum of $\on{line}(G)$. Unfortunately, as discussed in \cref{sec:overview} it is not quite so easy to go in the other direction: there are examples of line graphs which share their adjacency spectrum with non-line-graphs, and there are examples of graphs $G,G'$ which have different numbers of vertices (therefore different signless Laplacian spectra) but for which $\on{line}(G)$ and $\on{line}(G')$ have the same adjacency spectrum.

In this subsection we collect a few results related to \cref{prop:A-SL-line-graph}. First, $|\mr{L}(G)|=\mr{N}(G) \mr{N}(G)^T$ is a positive semidefinite matrix, so we have the following corollary of \cref{prop:A-SL-line-graph}.
\begin{fact}
For any graph $G$, the eigenvalues of $\mr{A}(\on{line}(G))$ are all at least
$-2$.
\end{fact}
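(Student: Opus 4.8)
The plan is to deduce this directly from \cref{prop:A-SL-line-graph} together with the positive semidefiniteness of $|\mr{L}(G)|$. Recall from the discussion preceding the statement that $|\mr{L}(G)|=\mr{N}(G)\mr{N}(G)^{T}$, where $\mr{N}(G)$ is the incidence matrix of $G$. First I would observe that for any vector $x$ we have $x^{T}|\mr{L}(G)|x = x^{T}\mr{N}(G)\mr{N}(G)^{T}x = \norm{\mr{N}(G)^{T}x}^{2}\ge 0$, so $|\mr{L}(G)|$ is positive semidefinite and hence every eigenvalue of $|\mr{L}(G)|$ is nonnegative.

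Next I would let $\mu$ be an arbitrary eigenvalue of $\mr{A}(\on{line}(G))$ and split into two cases. If $\mu=-2$ there is nothing to prove. Otherwise $\mu\ne -2$, so $\mu+2\ne 0$, and \cref{prop:A-SL-line-graph} (applied with $\lambda=\mu+2$, which is legitimate precisely because $\lambda\ne 0$) shows that $\mu+2$ is an eigenvalue of $|\mr{L}(G)|$. By the previous paragraph $\mu+2\ge 0$, i.e.\ $\mu\ge -2$, which is the desired conclusion.

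There is essentially no obstacle here; the only subtlety is that \cref{prop:A-SL-line-graph} is stated only for nonzero eigenvalues, which is exactly why the value $\mu=-2$ must be handled separately. One could alternatively bypass the case distinction entirely by arguing directly from the identity $\mr{A}(\on{line}(G))+2I=\mr{N}(G)^{T}\mr{N}(G)$: since $\mr{N}(G)^{T}\mr{N}(G)$ is a Gram matrix it is positive semidefinite, so all its eigenvalues are nonnegative, and therefore all eigenvalues of $\mr{A}(\on{line}(G))$ are at least $-2$. I expect the write-up to be only a couple of lines either way.
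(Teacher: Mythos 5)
Your proof is correct and matches the paper's approach: the paper likewise derives the fact from the positive semidefiniteness of $|\mr{L}(G)|=\mr{N}(G)\mr{N}(G)^{T}$ together with \cref{prop:A-SL-line-graph}. Your careful handling of the $\mu=-2$ case (and the alternative Gram-matrix argument via $\mr{A}(\on{line}(G))+2I=\mr{N}(G)^{T}\mr{N}(G)$) just makes explicit what the paper leaves implicit.
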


Also, \cref{thm:SL-bipartite} gives us a combinatorial description of the multiplicity of the zero eigenvalue of $G$. Together with \cref{prop:A-SL-line-graph}, this can be used to give a combinatorial description of the multiplicity of $-2$ as an eigenvalue of $\mr{A}(\on{line}(G))$.

\begin{lemma}[{\cite[Theorem~2.2.4]{CRS04}}]\label{lem:-2-multiplicity}
    Let $H$ be a connected graph with $v$ vertices and $e$ edges, and let $\mu_{-2}$ be the multiplicity of the eigenvalue $-2$ in $\mr{A}(\on{line}(H))$. Then
    \[
    \mu_{-2}=\begin{cases}
    e-v+1&\text{ if }H\text{ is bipartite,}\\
    e-v&\text{ if }H\text{ is not bipartite.}\\
    \end{cases}
    \]
    %If $G$ is bipartite, then the multiplicity of $-2$ as an eigenvalue of $\mr{A}(\on{line}(G))$ is precisely $m-n+1$. Otherwise, if $G$ is not bipartite, the multiplicity of $-2$ as an eigenvalue of $\mr{A}(\on{line}(G))$ is precisely $m-n$.
\end{lemma}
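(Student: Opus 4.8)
The plan is to deduce the formula directly from \cref{prop:A-SL-line-graph} together with the combinatorial control on the zero eigenvalue of the signless Laplacian provided by \cref{thm:SL-bipartite}. Since $\on{line}(H)$ has exactly $e$ vertices, the matrix $\mr{A}(\on{line}(H))$ has $e$ eigenvalues counted with multiplicity, so $\mu_{-2}=e$ minus the number of eigenvalues of $\mr{A}(\on{line}(H))$ different from $-2$ (counted with multiplicity). By \cref{prop:A-SL-line-graph}, the eigenvalues of $\mr{A}(\on{line}(H))$ different from $-2$ are precisely the numbers $\lambda-2$ as $\lambda$ ranges over the nonzero eigenvalues of $|\mr{L}(H)|$, with multiplicities preserved: indeed, given any eigenvalue $\mu\neq-2$ of $\mr{A}(\on{line}(H))$ we may set $\lambda=\mu+2\neq 0$ and apply the stated equivalence, and conversely the shift $\lambda\mapsto\lambda-2$ applied to a nonzero $\lambda$ never produces $-2$. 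Hence $\mu_{-2}$ equals $e$ minus the number of nonzero eigenvalues of $|\mr{L}(H)|$.

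It therefore remains to count the nonzero eigenvalues of the $v\times v$ matrix $|\mr{L}(H)|$, i.e.\ $v$ minus the multiplicity of its zero eigenvalue. This is exactly what \cref{thm:SL-bipartite} gives, since $H$ is connected: if $H$ is bipartite the zero eigenvalue has multiplicity $1$ by part~(1), so $|\mr{L}(H)|$ has $v-1$ nonzero eigenvalues and $\mu_{-2}=e-(v-1)=e-v+1$; if $H$ is not bipartite then $\det|\mr{L}(H)|$ is a positive integer by part~(2), so $0$ is not an eigenvalue, $|\mr{L}(H)|$ has $v$ nonzero eigenvalues, and $\mu_{-2}=e-v$. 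This completes the argument.

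There is essentially no obstacle here: all the ingredients were already assembled in \cref{sec:prelim}, and the only point needing a moment's care is verifying that the correspondence in \cref{prop:A-SL-line-graph} accounts for \emph{every} eigenvalue of $\mr{A}(\on{line}(H))$ other than $-2$, with no spurious extra copy of $-2$ created by the shift, which is immediate. I note in passing that there is an equally short self-contained alternative that bypasses \cref{prop:A-SL-line-graph}: writing $\mr{A}(\on{line}(H))=\mr{N}(H)^{T}\mr{N}(H)-2I$, one has $\mu_{-2}=\dim\ker\big(\mr{N}(H)^{T}\mr{N}(H)\big)=\dim\ker \mr{N}(H)=e-\on{rank}\mr{N}(H)$, and the rank of the incidence matrix of a connected graph is the classical quantity $v-1$ when $H$ is bipartite and $v$ otherwise; this route in fact reproves the relevant consequence of \cref{thm:SL-bipartite} as a byproduct.
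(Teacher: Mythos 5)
Your proof is correct and follows essentially the route the paper itself indicates: the paper cites this as \cite[Theorem~2.2.4]{CRS04} without giving a proof, but the sentence immediately preceding the lemma sketches exactly your derivation (combining \cref{prop:A-SL-line-graph} with the zero-eigenvalue count from \cref{thm:SL-bipartite} for connected graphs). Both your main argument and the incidence-matrix rank alternative are valid.
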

Finally, we state the Cameron--Goethals--Seidel--Shult theorem mentioned in \cref{sec:overview}: all but finitely many connected graphs which share their adjacency spectrum with a line graph are so-called \emph{generalised line graphs}.
\begin{definition}\label{def:GL}
Let $K_{n}$ be the complete graph on $n$ vertices. A \emph{perfect matching} in $K_{2m}$ is a collection of $m$ disjoint edges (covering all the vertices of $K_{2m}$). The \emph{cocktailparty graph} $\on{CP}(m)$ is the graph obtained from $K_{2m}$ by removing a perfect matching.

For a graph $G$ with vertices $v_1,\dots,v_n$, and nonnegative integers $a_1,\dots,a_n$, the \emph{generalised line graph} $\on{line}(G;a_1,\dots,a_n)$ is defined as follows. First, consider the disjoint union of the graphs
\[\on{line}(G), \;\on{CP}(a_1),\dots,\;\on{CP}(a_n).\]
(i.e., we include each of the above graphs as a separate connected component). Then, for each $i$, add all possible edges between the vertices of $\on{CP}(a_i)$ and the vertices of $\on{line}(G)$ corresponding to edges of $G$ incident to $v_i$ (this means $2a_i\deg(v_i)$ added edges for each $i$).
\end{definition}
Note that for any graph $G$ we have $\on{line}(G;0,\dots,0)=\on{line}(G)$.
\begin{theorem}[{\cite[Theorem~4.3 and 4.10]{CGSS76}}]\label{thm:CGSS}
Suppose $Q$ is a connected graph on more than 36 vertices, all of
whose adjacency eigenvalues are at least $-2$. Then $Q$ is a generalised
line graph.
\end{theorem}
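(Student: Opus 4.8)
The plan is to convert the spectral hypothesis into a statement about configurations of vectors in Euclidean space, and then appeal to the classification of simply-laced root systems. \textbf{Step 1 (Gram representation).} Since all eigenvalues of $\mr{A}(Q)$ are at least $-2$, the matrix $\mr{A}(Q)+2I$ is positive semidefinite, so it equals $M^{T}M$ for some real matrix $M$. Writing $v_{1},\dots,v_{n}$ for the columns of $M$ (where $n=|V(Q)|$), we obtain vectors with $\ang{v_{i},v_{i}}=2$ for all $i$ and $\ang{v_{i},v_{j}}\in\{0,1\}$ for $i\ne j$, with $\ang{v_{i},v_{j}}=1$ exactly when $v_{i}v_{j}$ is an edge of $Q$. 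Conversely, any such configuration produces a graph whose adjacency eigenvalues are at least $-2$. So it remains to understand which connected graphs are ``represented'' by a set $\mc V=\{v_{1},\dots,v_{n}\}$ of squared-length-$2$ vectors with pairwise inner products in $\{0,1\}$.

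\textbf{Step 2 (root system structure).} Consider the integer lattice $\Lambda$ spanned by $\mc V$, and close $\mc V$ under the reflections $x\mapsto x-\ang{x,v}v$ for $v$ in the current set. Each such reflection preserves squared length $2$, preserves integrality of pairwise inner products, and does not change $\Lambda$, and there are only finitely many squared-length-$2$ vectors in $\Lambda$, so this process terminates in a finite set $R$ of vectors that forms a crystallographic root system with all roots of the same length. The classification of such root systems (equivalently, of connected Dynkin diagrams with only simple bonds) shows that each irreducible component of $R$ has type $A_{m}$, $D_{m}$, or $E_{m}$ with $m\in\{6,7,8\}$. Since $Q$ is connected, $\mc V$ is connected under the ``inner product $1$'' relation, so it lies inside a single irreducible component; hence $\mc V\subseteq R$ with $R$ of type $A_{m}$, $D_{m}$, or $E_{m}$.

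\textbf{Step 3 (the $A$ and $D$ cases give generalised line graphs).} Realising $D_{m}$ as $\{\pm e_{i}\pm e_{j}:1\le i<j\le m\}\subseteq\mb R^{m}$ (so that $A_{m-1}$ corresponds to the roots $e_{i}-e_{j}$), one checks directly that a connected graph represented by a subset of $D_{m}$ is precisely a generalised line graph: the vectors of the form $e_{a}+e_{b}$ reproduce $\on{line}(G)$ for an auxiliary graph $G$ on $\{1,\dots,m\}$, since two such vectors have inner product $1$ iff the corresponding edges of $G$ share an endpoint, and the remaining roots available at a coordinate $i$ produce exactly the cocktailparty attachments $\on{CP}(a_{i})$ in \cref{def:GL}. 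Thus if $R$ has type $A$ or $D$, then $Q$ is a generalised line graph and we are done.

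\textbf{Step 4 (eliminating the exceptional case), and the main obstacle.} If instead $R$ has type $E_{6}$, $E_{7}$, or $E_{8}$, then, as $E_{6},E_{7}\subseteq E_{8}$, the configuration $\mc V$ consists of roots of the $240$-root system $E_{8}$ that are pairwise orthogonal or at $60^{\circ}$; a finite analysis of $E_{8}$ shows that any such set of roots has size at most $36$, so $|V(Q)|\le 36$, contradicting the hypothesis. Hence the exceptional case does not occur and $Q$ is a generalised line graph. The substance of the argument lies in Steps~2 and~4: Step~2 essentially re-derives the $ADE$ classification of root lattices in this slightly nonstandard setting, and Step~4 requires the sharp bound of $36$ for the size of a set of mutually orthogonal-or-$60^{\circ}$ roots in $E_{8}$, which is a genuinely finite but delicate computation (it is exactly the bound realised by the maximal exceptional graphs). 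A fully self-contained proof would therefore reconstruct a good deal of the theory of simply-laced root systems, which is why we invoke the theorem of Cameron, Goethals, Seidel and Shult~\cite{CGSS76} directly.
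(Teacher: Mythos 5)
The paper does not prove this statement at all: it is imported as a black box from Cameron--Goethals--Seidel--Shult \cite{CGSS76}, so there is no in-paper argument to compare against. What you have written is an outline of the standard proof from that source and its modern expositions: represent $\mr{A}(Q)+2I$ as the Gram matrix of norm-$2$ vectors with pairwise inner products in $\{0,1\}$, note that the subgroup $\Lambda$ they generate is discrete (the form is integral and positive definite on it, so $0$ is an isolated point) and even (the norm of an integer combination is $2\sum a_i^2+2\sum_{i<j}a_ia_j\ang{v_i,v_j}$), close up to a simply-laced root system, invoke the $ADE$ classification, identify representability in $D_m$ with being a generalised line graph, and eliminate the exceptional types via the bound of $36$ on sets of $E_8$-roots at mutual angles $60^\circ$ or $90^\circ$. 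As an outline this is accurate, the steps are in the right order, and the connectedness argument placing $\mc V$ in a single irreducible component is correct. However, two steps are genuinely substantive and are only asserted: in Step 3, the converse direction --- that \emph{every} connected graph represented in $D_m$ is a generalised line graph --- requires normalising the representation by coordinate sign changes and a nontrivial analysis of which roots $\pm e_i\pm e_j$ can coexist under the $\{0,1\}$ inner-product constraint (this is essentially \cite[Theorem~4.10]{CGSS76}, not a one-line check); and in Step 4 the constant $36$ is exactly the delicate finite computation in $E_8$ that makes the theorem sharp. You acknowledge both and fall back on the citation, which is honest, but it means your text is an annotated citation rather than a self-contained proof --- which, to be fair, puts it on the same footing as the paper itself.
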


\subsection{Primes in arithmetic progressions}

As mentioned in \cref{sec:overview}, we will need a quantitative version of Dirichlet's
theorem, counting primes in a given arithmetic progression.
\begin{theorem}
\label{thm:dirichlet-PNT}Fix coprime integers $a,d\ge1$, and let
$\varphi(d)>0$ be the number of integers up to $d$ which are relatively
prime to $d$. Let $\pi_{a,d}(n)$ be the number of primes up to $n$
which are congruent to $a\pmod d$. Then
\[
\lim_{n\to\infty}\left(\frac{\pi_{a,d}(n)}{n/\log n}\right)=\frac{1}{\varphi(d)}.
\]
\end{theorem}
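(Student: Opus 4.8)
The plan is to deduce this classical fact — the prime number theorem in arithmetic progressions — from the analytic theory of Dirichlet $L$-functions, following the standard route. Let $\chi$ range over the $\varphi(d)$ Dirichlet characters modulo $d$, write $\chi_0$ for the principal one, and let $\Lambda$ be the von Mangoldt function. For $\operatorname{Re}(s)>1$ set $L(s,\chi)=\sum_{n\ge1}\chi(n)n^{-s}=\prod_{p}(1-\chi(p)p^{-s})^{-1}$, so that $-L'(s,\chi)/L(s,\chi)=\sum_{n\ge1}\chi(n)\Lambda(n)n^{-s}$. Orthogonality of characters gives, for $\gcd(a,d)=1$ and any $x$,
\[
\sum_{\substack{n\le x\\ n\equiv a\,(d)}}\Lambda(n)=\frac{1}{\varphi(d)}\sum_{\chi}\overline{\chi(a)}\sum_{n\le x}\chi(n)\Lambda(n),
\]
so it suffices to understand the inner sums: the principal character must supply the main term and the others must contribute $o(x)$.

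The first substantive step is to collect the analytic facts about $L(s,\chi)$: each one extends meromorphically to $\operatorname{Re}(s)>0$, being entire for $\chi\ne\chi_0$, while $L(s,\chi_0)=\zeta(s)\prod_{p\mid d}(1-p^{-s})$ has a simple pole at $s=1$. The decisive input is that $L(s,\chi)\ne0$ on the whole line $\operatorname{Re}(s)=1$. For $s=1+it$ with $t\ne0$ this follows from the classical positivity (``$3$--$4$--$1$'') argument applied to a suitable product of $\zeta$ and $L$-functions, exactly as in the proof of the ordinary prime number theorem. The point I expect to be the main obstacle is the nonvanishing at the edge point, $L(1,\chi)\ne0$ for non-principal $\chi$: for a complex character this again follows from a product/positivity argument (a zero of $L(\cdot,\chi)$ at $s=1$ forces one for $L(\cdot,\overline\chi)$ as well, so $\prod_{\chi}L(s,\chi)$ would vanish at $s=1$ despite having nonnegative Dirichlet coefficients and being $\ge1$ for real $s>1$), but for a real (quadratic) non-principal character the single zero could cancel the pole of $L(s,\chi_0)$ and a genuinely different argument is required — for instance Dirichlet's class number formula, which expresses $L(1,\chi)$ in terms of a manifestly positive quantity, or Landau's theorem on Dirichlet series with nonnegative coefficients applied to a ratio such as $\zeta(s)L(s,\chi)/\zeta(2s)$.

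With nonvanishing on $\operatorname{Re}(s)=1$ in hand, I would apply the Wiener--Ikehara Tauberian theorem to the combined Dirichlet series
\[
\sum_{\chi}\overline{\chi(a)}\left(-\frac{L'(s,\chi)}{L(s,\chi)}\right)=\varphi(d)\sum_{\substack{n\ge1\\ n\equiv a\,(d)}}\frac{\Lambda(n)}{n^{s}}.
\]
Its Dirichlet coefficients are nonnegative, and by the facts above it is holomorphic on $\operatorname{Re}(s)\ge1$ apart from a single simple pole at $s=1$, coming only from the $\chi_0$ term, of residue $1$; hence the right-hand side has a simple pole at $s=1$ of residue $1/\varphi(d)$, and Wiener--Ikehara yields $\psi(x;a,d):=\sum_{n\le x,\,n\equiv a\,(d)}\Lambda(n)\sim x/\varphi(d)$. (Alternatively one can avoid the black-box Tauberian theorem by using Perron's formula and shifting the contour just past $\operatorname{Re}(s)=1$, at the cost of proving a classical zero-free region for the $L(s,\chi)$; Wiener--Ikehara packages this away.) Finally, two partial summations finish the job: the prime-power terms in $\psi(x;a,d)$ contribute only $O(\sqrt{x}\log x)$, so $\vartheta(x;a,d):=\sum_{p\le x,\,p\equiv a\,(d)}\log p\sim x/\varphi(d)$, and then partial summation against $1/\log t$ converts this into $\pi_{a,d}(x)\sim\frac{1}{\varphi(d)}\cdot\frac{x}{\log x}$, which is exactly the claimed limit.
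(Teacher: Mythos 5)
The paper does not prove this theorem at all: it is the classical prime number theorem for arithmetic progressions, quoted as known and attributed to de la Vall\'ee Poussin. Your outline is the standard modern proof of exactly that result (orthogonality of characters, nonvanishing of $L(s,\chi)$ on $\operatorname{Re}(s)=1$ with the real-character case at $s=1$ correctly singled out as the delicate point, Wiener--Ikehara, then two partial summations), and it is correct as a proof sketch; there is nothing to compare it against in the paper beyond the citation.
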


\cref{thm:dirichlet-PNT} was first proved by de la
Vall\'ee Poussin~\cite{dlVP96}. All we will need
from \cref{thm:dirichlet-PNT} is the following (immediate) corollary.
\begin{corollary}\label{cor:dirichlet-PNT}
Fix $\varepsilon>0$ and coprime integers $a,d\ge1$. For any sufficiently
large $n$, there is a prime number between $(1-\varepsilon)n$
and $(1+\varepsilon)n$ which is congruent to $a\pmod d$.
\end{corollary}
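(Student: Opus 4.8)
The plan is to deduce the corollary immediately from \cref{thm:dirichlet-PNT} by a simple counting argument. The key observation is that the primes $p\equiv a\pmod d$ lying in the interval $\big((1-\varepsilon)n,(1+\varepsilon)n\big]$ are counted exactly by the difference $\pi_{a,d}\big((1+\varepsilon)n\big)-\pi_{a,d}\big((1-\varepsilon)n\big)$, so it suffices to show that this difference is strictly positive for all large $n$; in fact I would show that it tends to infinity.

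Concretely, I would apply \cref{thm:dirichlet-PNT} with the two thresholds $(1+\varepsilon)n$ and $(1-\varepsilon)n$ (taking $n$ large enough that $(1-\varepsilon)n\ge 1$, which is harmless), obtaining
\[
\pi_{a,d}\big((1\pm\varepsilon)n\big)=\big(1+o(1)\big)\frac{(1\pm\varepsilon)n}{\varphi(d)\,\log\big((1\pm\varepsilon)n\big)}.
\]
Since $\log\big((1\pm\varepsilon)n\big)=\log n+\log(1\pm\varepsilon)=(1+o(1))\log n$, both denominators are $(1+o(1))\varphi(d)\log n$, and subtracting the two estimates yields
\[
\pi_{a,d}\big((1+\varepsilon)n\big)-\pi_{a,d}\big((1-\varepsilon)n\big)=\big(1+o(1)\big)\frac{2\varepsilon n}{\varphi(d)\,\log n}.
\]
The right-hand side tends to $+\infty$ as $n\to\infty$, using $\varepsilon>0$ and $\varphi(d)\ge 1$, so in particular it is strictly positive once $n$ is sufficiently large. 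A positive count forces the existence of at least one prime $p\equiv a\pmod d$ with $(1-\varepsilon)n<p\le(1+\varepsilon)n$, which is the claim.

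I do not expect any genuine obstacle here; the only subtlety is that \cref{thm:dirichlet-PNT} as stated provides only an asymptotic with an ineffective $o(1)$ error, so this argument does not yield an explicit value for "sufficiently large $n$". This is fine, since the corollary asserts only eventual existence. (If an explicit threshold were ever needed, one could instead invoke an effective form of the prime number theorem for arithmetic progressions, but that is not required for any application in this paper.)
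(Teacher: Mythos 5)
Your argument is correct and is exactly the standard deduction the paper has in mind when it calls \cref{cor:dirichlet-PNT} an ``(immediate) corollary'' of \cref{thm:dirichlet-PNT}: evaluate $\pi_{a,d}$ at the two thresholds $(1\pm\varepsilon)n$, note both denominators are $(1+o(1))\varphi(d)\log n$, and observe the difference $(1+o(1))\,2\varepsilon n/(\varphi(d)\log n)$ is eventually positive. No gaps; your remark about the ineffective $o(1)$ being harmless is also accurate.
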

\section{Distinguishing nice graphs by their Laplacian spectrum}\label{sec:laplacian}
In this section we prove \cref{lem:nice-LDS}: nice graphs are determined by their Laplacian spectrum. As discussed in \cref{subsec:LDS}, the first step is to ``localise'' the problem, showing that any graph with the same Laplacian spectrum as a nice graph is itself nice. First, we adapt some ideas of Boulet~\cite[Theorem~9]{Bou09} to prove the following lemma, which provides some approximate structure (though does not yet completely determine niceness).  Recall the definition of a sun-like graph from \cref{def:nice}.

\begin{lemma}\label{lem:almost-nice}
    Let $G$ be an $(\ell,k)$-nice graph, and let $H$ be a graph with the same Laplacian spectrum as $G$. Then $H$ is a sun-like graph whose cycle has length $\ell$. Moreover, $H$ has exactly one 1-hub, $k$ different 2-hubs, and no $i$-hubs for any $i>2$.
\end{lemma}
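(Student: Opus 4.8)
The plan is to recover structural information about $H$ from the Laplacian spectrum in stages: first that $H$ is connected, then that it is unicyclic with a cycle of length $\ell$, then that it has exactly the degree sequence of an $(\ell,k)$-nice graph, and finally that it is sun-like with the right hub counts.

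First I would use the matrix-tree theorem. Since $G$ is a connected unicyclic graph whose unique cycle has length $\ell$, it has exactly $\ell$ spanning trees, so by \cref{thm:kirchhoff} the matrix $\mr L(G)$ has a simple zero eigenvalue and its largest nonzero characteristic coefficient is $\zeta_{n-1}(\mr L(G)) = n\ell$. Because $H$ has the same Laplacian spectrum, $\zeta_{n-1}(\mr L(H)) = n\ell\neq 0$, which forces the zero eigenvalue of $\mr L(H)$ to be simple (if its multiplicity were $\ge 2$, every $(n-1)$-subset of the eigenvalues would contain a zero, making $\zeta_{n-1}=0$); by \cref{thm:kirchhoff} this means $H$ is connected with exactly $\ell$ spanning trees. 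The number of vertices of $H$ is the size of its spectrum and $\mu_1(\mr L(H)) = 2\,|E(H)|$ by \cref{prop:degree-moments}, so $H$ has $n$ vertices and $n$ edges; being connected, it is unicyclic. A connected unicyclic graph has as many spanning trees as the length of its unique cycle, so the cycle $C_H$ of $H$ has length $\ell$. In particular $|C_H| = \ell \ge 15 > 3$, so $H$ is triangle-free.

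Next I would pin down the degree sequence of $H$. Since $H$ is triangle-free, \cref{prop:degree-moments} says the spectrum determines $\sum_{v}\deg_H(v)^s$ for $s\in\{0,1,2,3\}$, and these coincide with the corresponding sums for $G$, which one reads off directly from \cref{def:nice}: the degree multiset of an $(\ell,k)$-nice graph consists of $2k+1$ ones, $\ell-k-1$ twos, one three, and $k$ fours. Writing $n_i$ for the number of degree-$i$ vertices of $H$, suitable linear combinations of the four moment identities become $\sum_i n_i = n$, $\sum_{i\ge 3}(i-2)n_i = n_1$ (the handshake identity $\sum_v(\deg_H(v)-2)=0$, valid as $|E(H)|=n$), $\sum_{i\ge 3}\binom{i-1}{2}n_i = 3k+1$, and $\sum_{i\ge 4}\binom{i-1}{3}n_i = k$. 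The extra, purely combinatorial, ingredient is that in a connected unicyclic graph every leaf lies off the unique cycle, so $n_1 \le n-|C_H| = n-\ell = 2k+1$. Using $\binom{i-1}{3} = \tfrac{i-1}{3}\binom{i-2}{2}\ge \binom{i-2}{2}$ for $i\ge 4$, with equality only at $i=4$, and $\binom{i-1}{2} = \binom{i-2}{2} + (i-2)$, I obtain
\[
k \;=\; \sum_{i\ge 4}\binom{i-1}{3}n_i \;\ge\; \sum_{i\ge 4}\binom{i-2}{2}n_i \;=\; (3k+1)-n_1 \;\ge\; (3k+1)-(2k+1) \;=\; k ,
\]
so equality holds throughout: the first inequality forces $n_i=0$ for all $i\ge 5$, the last forces $n_1=2k+1$, and then the identities $\sum_{i\ge 4}\binom{i-1}{3}n_i=k$ and $\sum_{i\ge 3}\binom{i-1}{2}n_i=3k+1$ give $n_4=k$ and $n_3=1$, whence $n_2 = n-n_1-n_3-n_4 = \ell-k-1$.

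Finally I would assemble the structure: $H$ has exactly $2k+1$ vertices of degree $1$ and exactly $n-\ell=2k+1$ vertices off $C_H$, and since every vertex of $C_H$ has degree at least $2$, the degree-$1$ vertices are precisely the off-cycle vertices; hence deleting all degree-$1$ vertices leaves exactly the cycle $C_H$. Thus $H$ is sun-like with a cycle of length $\ell$, and its hubs are exactly the cycle vertices of degree at least $3$: the unique degree-$3$ vertex is a $1$-hub, each of the $k$ degree-$4$ vertices is a $2$-hub, and since no vertex has degree $\ge 5$ there are no $i$-hubs with $i>2$, which is the asserted structure. The step I expect to be the main obstacle is the degree-sequence computation: the four spectral-moment identities on their own admit spurious solutions (for instance $n_3\mapsto n_3+6,\ n_4\mapsto n_4-4,\ n_5\mapsto n_5+1$ leaves all four moments unchanged), and it is only the combinatorial bound $n_1\le n-\ell$ coming from unicyclicity that makes the displayed chain of inequalities collapse to equalities and rules these out; doing the bookkeeping of $\sum_v\deg_G(v)^s$ and of the relevant linear combinations correctly is the delicate part. (The bound $\rho_{\mr{max}}>\Delta$ from \cref{lem:L-degree-inequality} gives an alternative, weaker handle on the maximum degree of $H$, but it turns out not to be needed for this argument.)
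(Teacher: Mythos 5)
Your proof is correct, and the overall architecture (matrix-tree theorem to get connectivity, hence unicyclicity and cycle length $\ell$; triangle-freeness from $\ell\ge 15$; then the first four Laplacian moments via \cref{prop:degree-moments} to pin down the degree sequence; then the counting argument identifying the degree-$1$ vertices with the off-cycle vertices) matches the paper's. The one place where you genuinely diverge is the degree-sequence step. The paper first invokes \cref{lem:L-degree-inequality} to get $\Delta(H)<\rho_{\mr{max}}\le 6$, so that only $n_1,\dots,n_5$ are unknown; the four moment equations then leave a one-parameter family, and the same combinatorial input you use (the $\ell$ cycle vertices all have degree $\ge 2$, equivalently $n_1\le n-\ell$) forces $n_5=0$. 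You instead skip the spectral degree bound entirely: by passing to the binomial combinations $\sum_i\binom{i-1}{2}n_i=3k+1$ and $\sum_i\binom{i-1}{3}n_i=k$ and exploiting $\binom{i-1}{3}\ge\binom{i-2}{2}$ for $i\ge4$ (strict for $i\ge 5$) together with $n_1\le 2k+1$, your sandwich $k\ge\cdots\ge k$ kills all degrees $\ge 5$ and nails $n_1,n_3,n_4$ in one stroke. I checked the identities ($3k+1$ and $k$ are the correct values for an $(\ell,k)$-nice graph) and the equality analysis; they are right. Your route is self-contained given the moment formulas and avoids citing the Fiedler and Anderson--Morley inequalities, at the cost of a slightly less transparent convexity argument; the paper's route is more mechanical but leans on two external spectral inequalities. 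Both are valid proofs of the lemma.
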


\begin{proof}
Let $n=\ell+2k+1$ be the number of vertices and edges in $G$. First of all, by \cref{prop:degree-moments}(1) and (2), $H$ also has $n$ vertices and $n$ edges, and by Kirchhoff's matrix-tree theorem (\cref{thm:kirchhoff}), $H$ is connected. So, $H$ is unicyclic. In a unicyclic graph, the number of spanning trees is equal to the length of the cycle, so by Kirchhoff's theorem again, the cycle in $H$ has length $\ell$.

Next, we study the degrees of vertices of $H$. Writing $E$ for the set of edges of $G$, recall from \cref{lem:L-degree-inequality}(2) that the largest Laplacian eigenvalue $\rho_{\mr{max}}$ is at most $\max \left\{\deg(u) + \deg(v), uv\in E\right\}\le 6$ (in a nice graph, every hub has degree at most 4, every non-hub has degree at most 2, and no two hubs are adjacent). By \cref{lem:L-degree-inequality}(1), the maximum degree of $H$ is strictly less than $\rho_{\mr{max}}$, so $H$  can only have vertices of degree 1,2,3,4 or 5.

Let $n_i$ be the number of vertices of degree $i$ in $H$. Since the definition of a nice graph includes the assumption that $\ell>12k\ge 3$, there are no triangles in $H$, so by \cref{prop:degree-moments}, the Laplacian spectrum determines the number of vertices, the sum of degrees, the sum of squares of degrees and the sum of cubes of degrees. In $G$, the numbers of vertices with degree 1,2,3 and 4 are $2k+1$, $\ell-k-1$, 1 and $k$, respectively, so we have
\begin{align}
    n_1+n_2+n_3+n_4+n_5 &= n=\ell+2k+1,\label{eq:sum-ni}\\
    n_1+2n_2+3n_3+4n_4+5n_5 &= 2n=2\ell+4k+2,\notag\\
    n_1+4n_2+9n_3+16n_4+25n_5 &= (2k+1)+4(\ell-k-1)+9+16k=4\ell+14k+6,\notag\\
    n_1+8n_2+27n_3+64n_4+125n_5 &= (2k+1)+8(\ell-k-1)+27+64k=8\ell + 58k + 20.\notag
\end{align}
This system of equations has a one-parameter family of solutions, given by
\begin{align}
    n_2 &= -4n_1 + \ell+7k+3\notag\\
	n_3 &= 6n_1 -12k - 5\notag\\
	n_4 &= -4n_1 + 9k+4\notag\\
	n_5 &= n_1 - 2k - 1.\label{eq:n5}
\end{align}
		
\cref{eq:n5,eq:sum-ni} together imply that $n_2+n_3+n_4+n_5 = \ell-n_5$ (i.e., there are $\ell-n_5\le \ell$ vertices with degree at least 2). But $H$ has a cycle of length $\ell$, and all the vertices on that cycle have degree at least 2, so we must have $n_5=0$ and all the vertices with degree at least 2 must lie on the cycle. This implies that $H$ is sun-like.

There was only one degree of freedom in our system of equations: knowing that $n_5=0$ allows us to deduce the values of all $n_i$, and in particular $n_3=1$ and $n_4=k$. That is to say, there is one 1-hub, $k$ different 2-hubs and no $i$-hubs for $i>2$, as desired.
% \begin{remark}
% 	From now on we assume that $k \ge 1$ meaning $n \ge 16$. Otherwise, we have already proven that $G$ and $H$ are isomorphic.
% \end{remark}
\end{proof}
%\subsection{Localising to sun-like graphs}
%First, we prove that any graph with the same Laplacian spectrum as a
\subsection{Decorated routes}\label{subsec:routes}
Recall the definition of a \emph{route} from \cref{def:routes}. The remainder of the proof of \cref{lem:nice-LDS} proceeds by carefully studying routes in sun-like graphs. In this subsection we introduce a convenient framework for working with such routes.
\begin{definition}
    Let $G$ be a sun-like graph. A \emph{decorated $s$-route} $R$ consists of a route $\vec v=(v_1,\dots,v_s)$ together with a label ``look'' or ``wait'' assigned to each $j$ for which $v_j=v_{j+1}$ and $v_j$ is a hub (here arithmetic is mod $s$). That is to say, recalling that we previously imagined a route $\vec v$ as a closed walk with some ``waiting steps'', we are now reinterpreting some of the waiting steps as steps where we ``look at a hub''.

    For a hub $v$, if $v_j=v_{j+1}=v$ and $j$ has the label ``look'', or if $v_j=v$ and $v_{j+1}$ is one of the leaves attached to $j$, then we say that the decorated route \emph{interacts with $v$} at step $j$ (i.e., interacting with a hub means looking at it or entering one of the leaves attached to it).

    For a decorated $s$-route $R$, define its \emph{multiplicity} $\on{mult}(R)$ to be the number of different decorated routes that can be obtained by cyclically shifting or reversing $R$. For example, if $R$ is a trivial route that repeatedly waits at a single vertex, then $\on{mult}(R)=1$, but in general $\on{mult}(R)$ can be as large as $2s$.

    Consider a decorated $s$-route $R$. Suppose that in this decorated route there are $r_1$ steps where we wait at leaf vertices, and $r_2$ steps where we wait at cycle vertices (not counting steps in which we look at a hub). Suppose that for each $i$, there are $t_i$ steps where we look at an $i$-hub. Then, we define the \emph{weight} of $R$ as \[w(R)=2^{r_2} \prod_{i=1}^\infty i^{t_i}.\]
    That is to say, we accumulate a factor of 2 whenever we wait at some vertex on the cycle (not when we wait at a leaf vertex), and we accumulate a factor of $i$ whenever we look at an $i$-hub.
\end{definition}
\begin{example}
Recall the $(46,3)$-nice graph in \cref{fig:nice}. Write $a,b,c$ for the three vertices between the 1-hub $v_0$ and the 2-hub $v_1$, and let $x$ be one of the leaf vertices attached to $v_1$. Then, an example of a route is
\[\vec v=(v_0,v_0,a,b,c,v_1,v_1,x,x,v_1,c,b,c,b,a,v_0).\]
This route has four ``waiting steps'' (in the first and last steps we wait at $v_0$, at the sixth step we wait at $v_1$ and at the eighth step we wait at $x$).

In order to make this route into a decorated route, for each of the steps where we wait at a hub (i.e., the first, sixth and last step) we need to decide whether to reinterpret this step as a step where we ``look at the hub''. For example, say we label the first step as ``look'' (and the sixth and last steps are labelled as ``wait''). This route interacts with $v_0$ and $v_1$, once each (we look at $v_0$, and enter a leaf attached to $v_1$). The weight of this decorated route is $2^2\cdot 1=4$ (we wait twice at cycle vertices, and look at a 1-hub once).
\end{example}

We then have the following consequence of \cref{fact:L-walks}.
\begin{lemma}\label{fact:decorated-route-count}
    Consider any sun-like graph $G$ whose cycle has length $\ell$. For $s<\ell$, let $\mathcal D_s$ be the set of all decorated $s$-routes in $G$. Then, the $s$-th spectral moment of $\mr{L}(G)$ is 
    \[\sum_{R\in\mathcal D_{s}}w(R).\]
\end{lemma}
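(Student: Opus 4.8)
The plan is to deduce this directly from \cref{fact:L-walks}, which expresses the $s$-th Laplacian spectral moment as $\sum_{\vec v\in\mathcal R_s}w(\vec v)$, by organising decorated routes according to their underlying (undecorated) route. Since a decorated $s$-route is by definition just an $s$-route $\vec v=(v_1,\dots,v_s)$ together with a binary ``look''/``wait'' label at each step $j$ with $v_j=v_{j+1}$ and $v_j$ a hub, the set $\mathcal D_s$ fibres over $\mathcal R_s$; it therefore suffices to prove that for every fixed $\vec v\in\mathcal R_s$, the sum of $w(R)$ over the finitely many decorated routes $R$ lying above $\vec v$ equals $w(\vec v)$, and then sum over $\vec v$.

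The first key point is a parity observation that uses the hypothesis $s<\ell$: every $\vec v\in\mathcal R_s$ has an even number of non-waiting steps. Indeed, the non-waiting steps of $\vec v$ form a closed walk in $G$ of length $s-t$, where $t$ is the number of waiting steps, and $s-t\le s<\ell$. Any traversal of a leaf edge is immediately followed by the reverse traversal (a leaf has degree $1$), so such traversals come in consecutive pairs; deleting all of them leaves a closed walk supported on the cycle $C$ of length at most $s-t<\ell$. Writing this walk as a sequence of $\pm 1$ steps around $C$, its total displacement is a multiple of $\ell$ of absolute value less than $\ell$, hence zero, so it uses equally many clockwise and counterclockwise steps and thus has even length. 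Adding back the deleted pairs, $s-t$ is even, so $(-1)^{s-t}=1$ and \cref{fact:L-walks} gives $w(\vec v)=\prod_{j\text{ waiting}}\deg(v_j)$.

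It remains to compute $\sum_{R\text{ above }\vec v}w(R)$. Fix $\vec v$ and classify its waiting steps: those at leaf vertices (contributing a factor $\deg=1$ to $w(\vec v)$), those at non-hub cycle vertices (contributing a factor $2$), and those at hubs; non-waiting steps contribute nothing to $w(\vec v)$ or to any $w(R)$. In a decorated route $R$ above $\vec v$, a leaf-waiting step contributes a factor $1$ to $w(R)$ (it only adds to $r_1$), a non-hub-cycle-waiting step contributes a factor $2$, and a hub-waiting step at an $i$-hub contributes a factor $2$ if labelled ``wait'' (adding to $r_2$) and a factor $i$ if labelled ``look'' (adding to $t_i$). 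Summing over the two labels of each hub-waiting step independently, each $i$-hub-waiting step contributes a factor $2+i=\deg$, so $\sum_{R\text{ above }\vec v}w(R)=\prod_{j\text{ waiting}}\deg(v_j)=w(\vec v)$. Summing over $\vec v\in\mathcal R_s$ and applying \cref{fact:L-walks} completes the proof.

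The only genuinely delicate point is the parity observation; everything else is bookkeeping. I would be careful to state explicitly where $s<\ell$ enters — without it a closed walk can wind once around an odd cycle, making $(-1)^{s-t}=-1$ and breaking the identity — and to keep straight the distinction between leaf-waiting steps, which are invisible to the weight, and cycle-waiting steps (including ``wait''-labelled hub steps), which carry the factor $2$.
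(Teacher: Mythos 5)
Your proof is correct and follows essentially the same route as the paper's: you verify that summing the weight over the two possible labels of each hub-waiting step recovers the degree factor $2+i=\deg(v)$ from \cref{fact:L-walks}, and you use $s<\ell$ to show the number of walking steps is even so that $(-1)^{s-t}=1$. Your parity argument is just a more careful spelling-out of the paper's one-line observation that a route shorter than $\ell$ cannot wind around the cycle and must retrace its steps.
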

\begin{proof}
In undecorated routes (as in \cref{def:routes}) we accumulate a factor of $\deg(v)=i+2$ each time we wait at a 2-hub $v$. For our decorated routes, we have simply broken this down into ``waiting'' and ``looking''; waiting accumulates a factor of $2$ (just as it does for a non-hub vertex on the cycle) and looking contributes a factor of $i$.

Also, recall that in an undecorated route we accumulate a factor of $-1$ for each step we walk along an edge. We can ignore this factor if we only consider routes less than $\ell$: such routes cannot make it all the way around the cycle, so must ``retrace their steps'' and therefore have an even number of ``walking steps''.
\end{proof}
The reason we have introduced the notion of a decorated route is that if we know the hub distribution of a graph, this is enough information to determine the contribution to the $s$-th spectral moment from routes which interact with at most one hub. (So, we can focus on routes which interact with multiple hubs, which are key to understanding how the hubs are distributed around the cycle).
\begin{lemma}\label{lem:discount-1-interaction}
    Let $G$ be a sun-like graph whose cycle has length $\ell$, and let $k_i$ be the number of $i$-hubs in $G$. Let $\mathcal D^{*}_s$ be the set of decorated $s$-routes which interact with at most one hub (any number of times). Then $\sum_{R\in\mathcal D^{*}_{s}}w(R)$ only depends on $\ell$ and $(k_i)_{i=1}^\infty$.
\end{lemma}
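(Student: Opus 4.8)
The plan is to show that any decorated $s$-route $R\in\mathcal D^*_s$ that interacts with at most one hub is naturally encoded by data that does not depend on the precise placement of the hubs around $C$, only on $\ell$ and the multiset $(k_i)_{i=1}^\infty$; then the total weighted count is a function of $\ell$ and $(k_i)$ alone. Concretely, I would split $\mathcal D^*_s$ into two pieces: (a) routes that interact with \emph{no} hub, and (b) routes that interact with \emph{exactly one} hub.

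For piece (a): a decorated $s$-route interacting with no hub can never enter a leaf and never ``looks'' at a hub (it may still \emph{wait} at a hub-vertex on the cycle, but that is just an ordinary waiting step of weight $2$). Hence such a route is simply a closed walk on the cycle $C$ (a $\ell$-cycle) with some waiting steps inserted, and its weight is $2^{(\text{number of waiting steps})}$. Since $s<\ell$, the combinatorics of closed walks on $C_\ell$ of length $s$ (with waiting) is exactly the same as on an infinite path, so the weighted count depends only on $\ell$ (for the choice of starting vertex / via $\operatorname{mult}$) and $s$ — in particular not on where the hubs sit. I would phrase this by noting that every vertex of $C$ looks identical from the point of view of a route that never interacts with a hub.

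For piece (b): suppose $R$ interacts only with hub $v$, and $v$ is an $i$-hub. Each interaction step is either a ``look'' at $v$ (contributing a factor $i$) or an excursion $v\to(\text{leaf})\to\cdots$; but since $R$ interacts with no \emph{other} hub and $s<\ell$, once the route leaves the leaves attached to $v$ it is confined to walking/waiting along the two arcs of $C$ emanating from $v$, never reaching another hub, so again these arcs ``look like'' paths and the local structure near $v$ is completely determined by $i$ (the number of leaves) together with $\ell$ (how far the arcs extend before wrapping, which is irrelevant for $s<\ell$). Thus the weighted contribution of all decorated $s$-routes interacting exactly once (i.e. with a single fixed hub) with an $i$-hub is some universal quantity $f_i(s,\ell)$ depending only on $i,s,\ell$; summing over all hubs, the total contribution from piece (b) is $\sum_i k_i\, f_i(s,\ell)$, which depends only on $\ell$ and $(k_i)_{i=1}^\infty$ as claimed. (Here I would be slightly careful with the multiplicity factor $\operatorname{mult}(R)$: the cleanest route is to count \emph{rooted} decorated routes — i.e. with a distinguished starting step — so that the $s$-th spectral moment is literally a sum over rooted decorated routes with weight $w(R)$, and then each rooted route interacting with one hub is obtained by choosing which hub and then choosing a ``pattern'' near that hub; the hub-independence of the pattern count is the point.)

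The main obstacle — really the only subtlety — is making rigorous the intuitive claim that ``from the perspective of a short route, the cycle $C$ and a hub's attached leaves look the same regardless of the hub placement''. The key input is the hypothesis $s<\ell$: a route of length $<\ell$ cannot wind around $C$, hence cannot reach two different hubs unless it starts between them in a way that is still governed only by the arc-lengths, and cannot even detect $\ell$ beyond ``it is at least $s$''. I would make this precise with a bijection argument: fix $\ell$ and $(k_i)$; given any two sun-like graphs $G,G'$ with these data, exhibit a weight-preserving bijection between $\{R\in\mathcal D^*_s(G)\}$ and $\{R\in\mathcal D^*_s(G')\}$ by transporting each route along the obvious local isomorphism between a neighbourhood of radius $<\ell$ of its relevant hub (or of its starting vertex, in case (a)) in $G$ and the corresponding neighbourhood in $G'$. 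Everything else (the weight formula $w(R)=2^{r_2}\prod_i i^{t_i}$) is manifestly local and preserved. This gives $\sum_{R\in\mathcal D^*_s}w(R)$ as a function of $\ell$ and $(k_i)_{i=1}^\infty$ only, completing the proof.
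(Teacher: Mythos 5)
Your proposal is correct in essence and follows the same route as the paper: the paper's proof is exactly the weight-preserving bijection you describe in your last paragraph, between the route sets of any two sun-like graphs with the same $\ell$ and $(k_i)_{i=1}^\infty$, split according to whether a route interacts with no hub or with exactly one hub. Two points of imprecision are worth fixing, though neither is fatal. First, your claim that a route interacting only with the hub $v$ is ``confined to the two arcs emanating from $v$, never reaching another hub'' is false: such a route may freely walk through, and even \emph{wait} at, other hubs --- it just may not look at them or enter their leaves. This does not hurt you, because a waiting step at a hub contributes the same factor $2$ as at any other cycle vertex, so the weight genuinely cannot see the uninteracted-with hubs; but the argument should be phrased that way rather than via confinement. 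Second, and relatedly, the ``local isomorphism between radius-$<\ell$ neighbourhoods of the relevant hub'' does not literally exist (such a neighbourhood already contains the whole cycle and the other hubs, which are placed differently in $G$ and $G'$); the correct transport map is an isomorphism of the two cycles carrying $v$ to a matching $i$-hub, extended by a bijection of the $i$ leaves at $v$ --- this is defined on every vertex such a route can actually interact with, and is what the paper uses. Finally, note that the lemma as stated has no hypothesis $s<\ell$ and the bijection argument does not need one (a winding route transports just as well), so you can drop that assumption; it is only needed later, in relating decorated routes to spectral moments.
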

\begin{proof}
    Consider two different graphs $G,H$ with the same statistics $\ell$ and $(k_i)_{i=1}^\infty$. We will show that the sum of weights under consideration is the same with respect to $G$ and $H$. Roughly speaking, the key observation will be that for any route involving a single hub in $G$, we can ``rotate the route around the cycle'' to find a corresponding route in $H$.
    
    The cycles $C_G$ and $C_H$ in $G$ both have the same length $\ell$, so we can fix an isomorphism $\phi:C_G\to C_H$. Fixing an orientation of $C_H$, let $\chi:C_H\to C_H$ be the automorphism that ``rotates one step clockwise around $C_H$''. Since $G,H$ have the same hub distribution, we can also fix an bijection $\psi:C_G\to C_H$ such that $v$ is an $i$-hub if and only if $\psi(v)$ is an $i$-hub. For each $v$ in $C_G$, there is a unique $j\in \mb Z/\ell\mb Z$ such that $\psi(v)=\chi^{(j)}(\phi(v))$ (i.e., we ``make $\psi(v)$ line up with $\phi(v)$'' by rotating it $j$ steps around the cycle). Let $\phi_v=\chi^{(j)}\circ \phi$ for this $j$, so $\phi_v$ is an isomorphism $C_G\to C_H$ with $\phi_v(v)=\psi(v)$.
    \begin{itemize}
        \item Clearly, $\phi$ gives us a correspondence between decorated $s$-routes that don't interact with any hubs in $G$, and decorated $s$-routes that don't interact with any hub in $H$.
        \item For any $i$-hub $v$ in $C_G$, the isomorphism $\phi_v$ (together with a bijection between the $i$ leaves attached to $v$ in $C_G$ and the $i$ leaves attached to $\psi(v)$ in $C_H$) gives us a correspondence between decorated $s$-routes which interact with the single hub $v$ in $G$, and $s$-routes which interact with the single hub $\psi(v)$ in $H$.
    \end{itemize}
The above correspondences are weight-preserving, so the desired result follows.
\end{proof}
\subsection{Localising to nice graphs}
Our first application of the framework in \cref{subsec:routes} is to finish the ``localisation step'' in the proof of \cref{lem:nice-LDS}: every graph with the same spectrum as a nice graph is itself nice. Given \cref{lem:almost-nice}, this basically comes down to studying distances between hubs.
\begin{lemma}\label{lem:localised-nice}
    Let $H$ be a graph with the same spectrum as an $(\ell,k)$-nice graph $G$. Then $H$ is an $(\ell,k)$-nice graph.
\end{lemma}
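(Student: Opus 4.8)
The plan is to start from the structural information already furnished by \cref{lem:almost-nice}: any graph $H$ cospectral (Laplacian) with an $(\ell,k)$-nice graph $G$ is sun-like, its cycle has length $\ell$, and it has exactly one $1$-hub, exactly $k$ $2$-hubs, and no higher hubs. So $H$ is a sun-like graph with the \emph{same hub multiplicities} $(k_i)$ and the same cycle length as $G$; what remains is to pin down the \emph{positions} of the $k$ $2$-hubs relative to the unique $1$-hub, and to verify that these positions satisfy the arithmetic constraints in \cref{def:nice} (first $2$-hub at clockwise distance $4$ from the $1$-hub, each subsequent $2$-hub at distance $4$ or $6$ from the previous). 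I would fix once and for all an orientation of the cycle of $H$ so that all $2$-hubs are nearer the $1$-hub clockwise than counterclockwise (possible since $\ell \ge 12k$, as in the definition), and label the clockwise distances from the $1$-hub to the $2$-hubs as $0 < d_1 < d_2 < \dots < d_k < \ell$.

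The engine is the combinatorial interpretation of Laplacian spectral moments via decorated routes (\cref{fact:decorated-route-count}) together with \cref{lem:discount-1-interaction}: for each $s < \ell$, the $s$-th Laplacian spectral moment equals $\sum_{R \in \mathcal D_s} w(R)$, and the part of this sum coming from decorated routes that interact with at most one hub is the same for $H$ as for $G$ (since they share $\ell$ and $(k_i)$). Subtracting, we learn that for every $s < \ell$ the quantity $\sum_{R} w(R)$ over decorated $s$-routes in $H$ that interact with at least two hubs agrees with the corresponding quantity in $G$. The idea is then to run an induction "exploring outward from the $1$-hub", exactly as sketched in \cref{subsec:LDS}: assuming we have already matched the positions of all $2$-hubs of $H$ within clockwise distance $d$ of the $1$-hub with those of $G$, use the $(2d+2)$-th spectral moment to decide whether $H$ has a $2$-hub at distance $d+1$. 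The reason this works is a parity/divisibility argument: a short closed walk around the cycle that interacts with hubs contributes a weight $2^{r_2}\prod i^{t_i}$, and each interaction with a $2$-hub contributes a factor of $2$, while interactions with the (unique) $1$-hub contribute a factor of $1$. Among the relevant routes of length $2d+2$ — those that reach distance $d+1$ from the $1$-hub and interact with $\ge 2$ hubs — the ones that pass through a putative $2$-hub at distance $d+1$ and through the $1$-hub are "odd" in a way the others are not, so looking at the spectral-moment difference modulo an appropriate power of $2$ isolates exactly the indicator of a $2$-hub at distance $d+1$. Carefully, one must also handle routes that interact with two $2$-hubs both at distance $\le d$, but those are already determined by the inductive hypothesis, so they cancel in the comparison with $G$.

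Once all the distances $d_1 < \dots < d_k$ of $H$ are shown to equal those of $G$, we are essentially done: $G$ being $(\ell,k)$-nice forces $d_1 = 4$ and $d_{j+1} - d_j \in \{4,6\}$, hence the same holds for $H$, so $H$ is $(\ell,k)$-nice. The main obstacle I anticipate is bookkeeping in the induction: getting the length $2d+2$ exactly right so that the "new" route hitting distance $d+1$ is the shortest nontrivial closed walk witnessing a hub there, controlling the walks that go around multiple times or interact with several $2$-hubs (ruling out that their weighted contributions conspire to mask the signal), and making the modular arithmetic precise — i.e., identifying the exact $2$-power modulus at which the contribution of a $1$-hub-plus-new-$2$-hub route survives while everything else vanishes. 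A secondary subtlety is the boundary/orientation issue: one should check that the orientation can be chosen consistently for $H$ just as for $G$, using $\ell \ge 12k$, so that "clockwise distance $d+1$" is unambiguous throughout the induction. I would structure the write-up as: (i) set up labels and orientation; (ii) state the per-$s$ cancellation consequence of \cref{lem:discount-1-interaction}; (iii) prove the inductive step via the weight divisibility lemma; (iv) conclude.
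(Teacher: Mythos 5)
Your setup (invoking \cref{lem:almost-nice}, then comparing the route-sums of $G$ and $H$ after discounting single-hub interactions via \cref{fact:decorated-route-count,lem:discount-1-interaction}) matches the paper, but the main step has a genuine gap, and it is a circularity. You propose to ``fix once and for all an orientation of the cycle of $H$ so that all $2$-hubs are nearer the $1$-hub clockwise than counterclockwise (possible since $\ell\ge 12k$).'' That property is a consequence of niceness, which is exactly what you are trying to prove; at this stage all you know from \cref{lem:almost-nice} is that $H$ is sun-like with the right hub multiplicities, so its $2$-hubs may be scattered on \emph{both} sides of the $1$-hub. This matters because your detection mechanism is a parity/divisibility test: the weight contributed by routes shuttling between $v^*$ and a hub at a given distance is proportional to the \emph{number} of such hubs, and there can be up to two (one on each side). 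Parity distinguishes $1$ from $0$ or $2$, but not $0$ from $2$, so your induction cannot rule out a configuration with two hubs equidistant from $v^*$ on opposite sides --- which need not be nice even when all the graph distances from $v^*$ match those of $G$. The paper's proof confronts exactly this point: \cref{claim:distance-4}(1) disposes of the ``three or more hubs at distance $4$'' case by a packing argument, and \cref{claim:distance-6}(1) rules out two hubs at distance $6$ using the already-established hub at distance $4$ together with the minimum-distance constraint of \cref{claim:no-distance<4}. Your write-up never engages with this $0$-versus-$2$ ambiguity. A secondary problem: without knowing $H$ is nice, a hub could sit at graph distance up to $\lfloor\ell/2\rfloor$ from $v^*$, and detecting it would require a spectral moment of order at least $\ell$, outside the range where \cref{fact:decorated-route-count} applies (the sign bookkeeping there needs $s<\ell$), so your exploration cannot even reach all candidate positions.

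There is also a structural mismatch worth noting. You are attempting, inside this lemma, the full ``decode every hub position'' induction, which is the content of the \emph{second} half of the proof of \cref{lem:nice-LDS}. The paper deliberately proves the present lemma with far less: only the moments with $s\le 14$ are used, to establish that the minimum hub distance is $4$ (\cref{claim:no-distance<4}), that no two hubs are at distance $5$ (\cref{claim:no-distance-5}), and that the $1$-hub has exactly one hub at distance $4$ and none at distance $6$ while the counts of hub pairs at distances $4$ and $6$ match those of $G$ (\cref{claim:distance-4,claim:distance-6}). A counting argument over consecutive hub pairs around the cycle (there are $k+1$ hubs, hence $k$ consecutive gaps other than the long one, and exactly $k$ pairs at distance $4$ or $6$) then forces the chain structure of a nice graph. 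Only \emph{after} niceness is established does the decoding induction become legitimate, because niceness is what guarantees that all hubs lie at even clockwise distances at most $6k-2<\ell/2$ from $v^*$, making ``the hub at distance $2q$'' unambiguous and the required moments available. If you want to salvage your plan, you must either prove the one-sidedness claim first (which essentially is this lemma) or replicate the paper's low-order-moment arguments to break the $0$-versus-$2$ symmetry before launching the induction.
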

\begin{proof}
    In this proof, we will omit the word ``decorated'' (we will have no reason to consider undecorated routes). First, we apply \cref{lem:almost-nice} to see that $H$ is a sun-like graph whose cycle has length $\ell$, with one 1-hub, $k$ 2-hubs, and no $i$-hubs for $i>2$.

    Let $\eta_s(H),\eta_s(G)$ be the sum of weights of $s$-routes which interact with at least 2 hubs (with respect to $H$ and $G$, respectively). By \cref{lem:discount-1-interaction,fact:decorated-route-count}, and the fact that $\ell\ge 15$ from \cref{def:nice}, we have $\eta_s(H)=\eta_s(G)$ for all $s\le 14$ (so, we mostly just write ``$\eta_s$'' to indicate this common value).
    %the spectrum of $H$ (and the structural information we have determined so far) is enough to determine $\eta_s$ for all $s<\ell$.

    Now, we use the parameters $\eta_s$ to study the structure of $H$. We break this down into a sequence of claims.
    \begin{claim}\label{claim:no-distance<4}
        In $H$, the closest pair of hubs is at distance 4.
    \end{claim}
    \begin{proof}
        Note that $\eta_{2d+2}>0$ if and only if there are two hubs whose distance is at most $d$. Indeed, the shortest way for a route to interact with two hubs is to look at one hub, walk $d$ steps to the next hub, look at it, and walk back; this takes $1+d+1+d=2d+2$ steps.
        
        The closest pair of hubs in $G$ are at distance $4$, so the same is true in $H$. (Note that $2\cdot 4+2\le 14$).
    \end{proof}
    \begin{claim}\label{claim:distance-4}
        In $H$:
        \begin{enumerate}
            \item the 1-hub has distance 4 from exactly one other hub, and
            \item the number of pairs of hubs at distance 4 from each other is the same in $G$ and $H$.
        \end{enumerate}
    \end{claim}
    \begin{proof}
        The only routes that contribute to $\eta_{10}$ are those routes which walk back and forth between two different hubs at distance 4, looking once at each hub along the way. Each such route contributes a weight of 4, unless one of the hubs is a 1-hub, in which case the route contributes a weight of 2. Also, each such route has multiplicity 10 (all ten cyclic shifts yield different routes, but reversing the order does not yield any further routes).
        %Also, each pair of hubs $u,v$ at distance 4 contribute 10 different routes to $\eta_{10}$, because a route can start at any of the 5 vertices between $u$ and $v$ (inclusive), and can go in two different directions.

        So, $\eta_{10}/20$ can be interpreted as the number of hubs at distance 4 from the 1-hub, plus two times the number of pairs of 2-hubs at distance 4 from each other.
        
        In $G$, there is exactly one hub at distance 4 from the 1-hub. So, $\eta_{10}(G)/20=\eta_{10}(H)/20$ is odd, meaning that there must be an odd number of hubs at distance 4 from the 1-hub in $H$. The only possible odd number here is 1, because there is simply no room to put three or more hubs at distance 4 from the 1-hub.

        Then, in $H$ and in $G$, the number of pairs of 2-hubs at distance 4 from each other is $(\eta_{10}/20-1)/2$.
    \end{proof}
    Now, \cref{claim:no-distance<4,claim:distance-4} show that the contributions to $\eta_s(H)$ and $\eta_s(G)$ from routes which interact with two hubs within distance at most 4 (and no other hubs) are the same. (Formally, this can be proved in a similar way to \cref{lem:discount-1-interaction}, considering a bijection between the set of pairs of hubs at distance 4 in $G$, and the set of  pairs of hubs at distance 4 in $H$). Let $\eta_s'(H),\eta_s'(G)$ be obtained from $\eta_s(H)$ and $\eta_s(G)$ by subtracting these contributions, so $\eta_s'(H)=\eta_s'(G)$ for $s\le 14$.
    \begin{claim}\label{claim:no-distance-5}
        In $H$, there are no hubs at distance $5$ from each other.
    \end{claim}
    \begin{proof}
        The only routes which can contribute to $\eta_{12}'$ are routes which interact with two different hubs at distance 5 from each other. (By \cref{claim:no-distance<4}, every pair of hubs is at distance at least 4 from each other, so routes of length 12 are much too short to interact with three different hubs). Since $G$ has no pair of hubs at distance 5, the same is true for $H$.
    \end{proof}
    \begin{claim}\label{claim:distance-6}
        In $H$:
        \begin{enumerate}
            \item the 1-hub does not have distance $6$ from any other hub, and
            \item the number of pairs of hubs at distance 6 from each other is the same in $G$ and $H$.
        \end{enumerate}
    \end{claim}
    \begin{proof}
        Given \cref{claim:no-distance-5}, the only routes which can contribute to $\eta_{14}'$ are routes which interact with two different hubs at distance 6. (Routes of length 14 are still too short to interact with three different hubs).

        The same considerations as for \cref{claim:distance-4} show that $\eta_{14}'/28$ can be interpreted as the number of hubs at distance 6 from the 1-hub, plus two times the number of pairs of 2-hubs at distance 6 from each other.
        
        In $G$, there is no hub at distance 6 from the 1-hub. So, $\eta_{14}'(G)/28=\eta_{14}'(H)/28$ is even, meaning that there are an even number of hubs at distance 6 from the 1-hub $v^*$ in $H$. The only possible even number here is zero, because if there were two hubs at distance 6 from $v^*$ (one on either side), one of these 2-hubs would be at distance 2 from the hub guaranteed by \cref{claim:distance-4}(1) at distance 4 from $v^*$, and this is ruled out by \cref{claim:no-distance<4}.
        
        Then, in $H$ and in $G$, the number of pairs of 2-hubs at distance 6 from each other is $(\eta_{14}'/28)/2$.
    \end{proof}
    Now, \cref{claim:no-distance<4,claim:no-distance-5,claim:distance-4,claim:distance-6} together imply that $H$ is a $(k,\ell)$-nice graph. Indeed, imagine walking around the cycles of $G$ and $H$, and consider the distances between each pair of consecutive hubs. By \cref{claim:no-distance<4,claim:no-distance-5}, these distances are either 4 or at least 6. By
    \begingroup \renewcommand*{\crefpairconjunction}{(2) and~} \cref{claim:distance-4,claim:distance-6}(2) \endgroup
    , the number of consecutive pairs of hubs in $H$ which are at distance 4 or 6 is the same as the number of consecutive pairs of hubs in $G$ which are at distance 4 or 6; this number is exactly $k$. Recalling that $H$ and $G$ both have exactly $k+1$ hubs, it follows that in $H$ we can start from some hub $v_0$ and walk along the cycle, encountering a new hub every 4 or 6 steps until we reach a final hub $v_k$. By
    \begingroup \renewcommand*{\crefpairconjunction}{(1) and~} \cref{claim:distance-4,claim:distance-6}(1), \endgroup
    the 1-hub is either $v_0$ or $v_k$ (with distance exactly 4 to its closest 2-hub). We have established that $H$ is $(\ell,k)$-nice.
\end{proof}
\subsection{Decoding a nice graph}
Now, we complete the proof of \cref{lem:nice-LDS}, showing that we can decode a specific nice graph using its Laplacian spectrum.
\begin{proof}[Proof of \cref{lem:nice-LDS}]As in the proof of \cref{lem:localised-nice}, we will omit the word ``decorated'' (we will again have no reason to consider undecorated routes).

Suppose we know that $G$ is an $(\ell,k)$-nice graph (for some $k\ge 1$ and $\ell>12k$), and suppose we know the spectrum of $G$. We will show how to use this information to determine exactly which $(\ell,k)$-nice graph $G$ is (this suffices to prove \cref{lem:nice-LDS}, by \cref{lem:localised-nice}). 

Specifically, it suffices to determine, for each $q\le 3k-1$, whether there is a hub at distance $2q$ from the 1-hub $v^*$. (In a nice graph, every hub is at even distance from $v^*$, and the furthest possible distance between hubs is $4+6(k-1)=6k-2$).

We proceed by induction. For some $q\le 3k-1$, suppose we know the positions of all hubs within distance $2q-1$ of $v^*$. We would like to determine whether there is a hub at distance $2q$ from $v^*$.

%First, note that we may assume that there is a hub at distance $q-4$ from $v^*$ (otherwise there is no possibility for there to be a hub at distance $q$ from $v^*$).

Let $\eta_{s}$ be the sum of weights of $s$-routes which interact with at least 2 hubs. By \cref{lem:discount-1-interaction}, we have enough information to determine $\eta_s$ for $s<\ell$. We can refine this further: let $\eta_s'$ be obtained from $\eta_s$ by subtracting the contribution from all routes which interact only with hubs within distance $2q-1$ of $v^*$. Since our inductive assumption is that we know the positions of all hubs within distance $2q-1$ of $v^*$, we have enough information to determine $\eta_s'$ (for $s<\ell$).

We focus in particular on the quantity $\eta_{4q+2}'$ (note that $4q+2\le 4(3k-1)+2<12k<\ell$, so we have enough information to determine this quantity). We break down $\eta_{4q+2}'$ further:
\begin{itemize}
    \item Let $\alpha_{4q+2}$ be the contribution to $\eta_{4q+2}'$ from routes which interact with $v^*$, and
    \item Let $\beta_{4q+2}(t)$ be the contribution to $\eta_{4q+2}'$ from routes which do not interact with $v^*$, and interact with 2-hubs $t$ times.
\end{itemize}
Note that $\eta_{4q+2}'=\alpha_{4q+2}+\sum_{t=2}^\infty\beta_{4q+2}(t)$.
\begin{claim}\label{claim:alpha}
    We have
    \[\alpha_{4q+2}=\begin{cases}
        8q+4&\text{if there is a hub at distance }2q\text{ from $v^*$}\\
        0&\text{otherwise}
    \end{cases}\]
\end{claim}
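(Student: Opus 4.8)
The plan is to show that every decorated $(4q+2)$-route contributing to $\alpha_{4q+2}$ is forced to be the obvious ``there-and-back'' route between $v^*$ and a hub at distance exactly $2q$, and then to count that single equivalence class of routes with its multiplicity and weight. By definition of $\eta_{4q+2}'$, a route contributing to $\alpha_{4q+2}$ must interact with $v^*$ and with at least one hub at distance $\ge 2q$ from $v^*$ (if it interacted only with hubs within distance $2q-1$ it would have been subtracted off); since $v^*$ is itself a hub, these are exactly the two hubs required to contribute to $\eta_{4q+2}$. The first and main step is a length-budget argument: the cheapest way to interact with a hub is to ``look'' at it (one step), since entering one of its attached leaves costs at least two net steps, and this remains true even if we try to place the start of the closed route inside a leaf (the only step that counts as an interaction via a leaf is a step \emph{from} the hub \emph{into} the leaf). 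A walk from $v^*$ to a hub $h$ at cycle-distance $d$ and back uses at least $2d$ walking steps, because the cycle-distance is the length of the shortest walk between two vertices of the cycle. Hence a contributing route has length at least $1+d+1+d = 2d+2 \ge 4q+2$, and equality forces: exactly one ``look'' step at $v^*$; a shortest $v^*$-to-$h$ walk; exactly one ``look'' step at $h$; a shortest $h$-to-$v^*$ walk; and no further interactions, waiting steps, or (since the budget is exhausted) room for any third hub interaction. In particular $\operatorname{dist}(v^*,h)=2q$, so $\alpha_{4q+2}=0$ unless such a hub exists.

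The second step pins down this hub and the route. Using $q\le 3k-1$, so $2q\le 6k-2$, together with $\ell\ge 12k$ from \cref{def:nice}: any hub reached counterclockwise from $v^*$ lies at distance at least $12k-(6k-2)=6k+2>2q$, so there is at most one hub at distance exactly $2q$, it is reached clockwise, and since $2q<\ell/2$ the shortest $v^*$-to-$h$ walk is unique. Writing that walk as $v^*=u_0,u_1,\dots,u_{2q}=h$, the contributing route, up to cyclic shift and reversal, is the unique vertex sequence $(v^*,v^*,u_1,\dots,u_{2q-1},h,h,u_{2q-1},\dots,u_1)$; its decoration is forced, since both waiting-at-a-hub steps must carry the label ``look'' (there is no leaf-step available to realise the required interactions with $v^*$ and $h$ any other way). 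Note also that $4q+2\le 12k-2<\ell$, so everything takes place inside the range where the decorated-route formalism of \cref{subsec:routes} applies.

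The third step is the bookkeeping. Every vertex in the sequence above occurs exactly twice, and the two occurrences of $v^*$ are adjacent, which precludes any nontrivial period dividing $4q+2$; hence all $4q+2$ cyclic shifts give distinct routes. Reversing the route produces exactly its cyclic shift by $2$, so reversal contributes nothing new, giving $\on{mult}=4q+2$. For the weight: there are no waiting steps at cycle vertices, we look once at the $1$-hub $v^*$ (contributing a factor $1$) and once at the $2$-hub $h$ (contributing a factor $2$), and the $4q$ walking steps are even in number so the sign is $+$; thus the weight of the route is $2$. Summing over the whole equivalence class yields $\alpha_{4q+2}=(4q+2)\cdot 2=8q+4$ when a hub at distance $2q$ exists, and $0$ otherwise, which is the claim.

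I expect the first step to be the real obstacle: one must be genuinely careful that no ``wasteful'' length-$(4q+2)$ route---leaf-visits at $v^*$ or $h$, detours on the cycle, starting the route inside a leaf, or attempting to touch a third hub---can ever match the straight there-and-back route. Once the route is rigidified, the multiplicity computation (the short period argument) and the weight computation are routine within the framework already set up.
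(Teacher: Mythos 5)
Your proof is correct and follows essentially the same route as the paper's (which simply asserts that contributing routes must be the back-and-forth walks with one look at each of $v^*$ and the hub at distance $2q$, counts the $4q+2$ cyclic shifts, and multiplies by the weight $2$). Your version just makes the length-budget rigidification, the uniqueness of the hub at distance $2q$ (via $\ell\ge 12k$), and the multiplicity/reversal bookkeeping explicit, all of which check out.
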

\begin{proof}
If there is no hub at distance $2q$ from $v^*$, then a route of length $4q+2$ is simply too short to interact with $v^*$ and with one of the 2-hubs that is not within distance $2q-1$ of $v^*$.

If there is a hub $v$ at distance $2q$ from $v^*$, the only routes which contribute to $\alpha_{4q+2}$ are those routes which walk back and forth between $v$ and $v^*$, looking once at $v$ and $v^*$ along the way. All these routes are cyclic shifts of each other (so, there are $4q+2$ of them), and each such route contributes a weight of 2.
\end{proof}

\begin{claim}\label{claim:beta}
    $\beta_{4q+2}(t)$ is divisible by $8$ for all $t$.
\end{claim}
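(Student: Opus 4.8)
The plan is to control the $2$-adic valuation of $\beta_{4q+2}(t)$ route-by-route, by grouping the decorated $(4q+2)$-routes counted by $\beta_{4q+2}(t)$ according to data that does not affect the weight.

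\textbf{Step 1: passing to ``shapes''.} I would call two such decorated routes \emph{equivalent} if they differ only in which leaf is used at each step where the route enters a leaf of a hub (call such a step a \emph{dip}), and call an equivalence class a \emph{shape}. Every hub interacted with by a route counted in $\beta_{4q+2}(t)$ is a 2-hub, since the route avoids the unique 1-hub $v^*$; so each of the $j$ dips in a shape has exactly two available leaves, and a shape unfolds into exactly $2^j$ decorated routes. These all have the same weight: writing $r_2$ for the number of genuine cycle-vertex waits and $t_2$ for the number of look-interactions (necessarily all at 2-hubs), each has weight $2^{r_2}\cdot 2^{t_2}$ by definition of $w(\cdot)$. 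Since $t_2+j=t$, the total contribution of one shape to $\beta_{4q+2}(t)$ is $2^{j}\cdot 2^{r_2+t_2}=2^{r_2+t}$.

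\textbf{Step 2: the easy cases.} This already settles the claim when $t\ge 3$, as every shape then contributes a multiple of $2^3=8$; and it settles $t\le 1$ vacuously, since a route avoiding $v^*$ and interacting with 2-hubs at most once touches at most one hub, whereas every route counted in $\eta_{4q+2}$ touches at least two. When $t=2$, the only shapes whose contribution $2^{r_2+2}$ is not already divisible by $8$ are those with $r_2=0$, each contributing exactly $4$; so it remains to prove their number is even.

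\textbf{Step 3: the main point.} Here the plan is to show that the shapes with $t=2$ and $r_2=0$ fall into cyclic-shift orbits of the (even) size $4q+2$. Such a shape interacts with 2-hubs exactly twice, and since it is counted in $\eta_{4q+2}$ and avoids $v^*$, it must interact with two \emph{distinct} 2-hubs, once each. Hence it is aperiodic as a cyclic word of length $4q+2$: a proper period $p\mid 4q+2$ would force each of these interactions to be repeated $(4q+2)/p\ge 2$ times. So no nontrivial cyclic shift fixes it; the set of these shapes is closed under cyclic shift (which preserves lengths, $r_2$, and all interaction data), so it is partitioned into orbits of size $4q+2$ and thus has even cardinality. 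Consequently the contribution of the $r_2=0$ shapes to $\beta_{4q+2}(2)$ is a multiple of $4(4q+2)=8(2q+1)$, and $\beta_{4q+2}(2)$ is divisible by $8$. I expect Step 3 to be the only real obstacle, namely ruling out shapes fixed by a nontrivial cyclic shift; this is precisely where one uses that a $\beta$-route touches two distinct hubs (so that some 2-hub is touched exactly once), and it is the reason the claim is stated with $4q+2$ steps rather than with a generic even length.
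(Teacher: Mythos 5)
Your proof is correct and follows essentially the same route as the paper: the leaf-swapping equivalence gives a factor of $2^t$, and for $t=2$ the remaining factor of $2$ comes from cyclic-shift orbits of size $4q+2$, which is even because the route interacts with some 2-hub exactly once and hence is aperiodic. Your formulation via shapes (quotienting by leaf choices before letting cyclic shifts act) is a slightly cleaner way of combining the two factors than the paper's appeal to $\on{mult}(R)$, but the ideas are the same.
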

\begin{proof}
Consider a 2-hub $v$, and write $x,y$ for the leaves attached to $v$. Consider a route $R$ which at some step $j$ enters $x$ from $v$ (then waits at $x$ for some number of steps before returning to $v$). We can slightly modify $R$ by simply entering $y$ instead of $x$ at step $j$ (and then waiting at $y$ for the same number of steps before returning to $v$).

%Consider a route with vertex sequence $\vec v=(v_1,\dots,v_s)$. For some integers $j\in \{1,\dots,s\}$ and $t\ge 0$ (viewed as elements of the cyclic group $\mb Z/s\mb Z$), suppose that $v_j=v_{j+t+1}$ is a 2-hub, and $v_{j+1}=\dots=v_{j+t}$ is one of the two leaves attached to the 2-hub (that is to say, at step $j$ the route enters one of the leaves attached to the 2-hub, waits there for $t$ steps, and returns to the 2-hub). Then, we can obtain a slightly different route simply by changing $v_{j+1},\dots,v_{j+t}$ to the other of the two leaves attached to the 2-hub (i.e., we simply enter the other of the two leaves, and wait there for the same amount of time before returning).

Say that two routes are \emph{equivalent} if they can be obtained from one another by a sequence of modifications of this type. So, routes in an equivalence class have essentially the same structure, but they may visit different leaves. Now, consider a route $R$ which looks at 2-hubs $a$ times, and enters leaves attached to 2-hubs $b$ times (so, $R$ interacts with 2-hubs $a+b$ times). The equivalence class of $R$ has size $2^b$, and the weight of each route in this equivalence class is divisible by $2^a$. So, the total weight of this equivalence class is divisible by $2^{a+b}$.

It immediately follows that $\beta_{4q+2}(t)$ is divisible by $2^t$, so if $t\ge 3$ then $\beta_{4q+2}(t)$ is divisible by 8. It remains to consider $\beta_{4q+2}(2)$ in more detail.

The routes that contribute to $\beta_{4q+2}(2)$ are the routes which interact once each with two different 2-hubs $u,v$ (and do not interact with $v^*$). Fix such a route $R$. As above, the equivalence class of $R$ contributes weight divisible by 4, so we just need an additional factor of 2. This comes from the fact that $\on{mult}(R)$ is equal to $4q+2$ or $8q+4$ (both of which are divisible by 2). Indeed, all $4q+2$ cyclic shifts of $R$ yield different routes, because there is a unique interaction-with-$u$ step whose position changes with each cyclic shift. Reversing the order of $R$ may or may not yield $4q+2$ additional routes.
\end{proof}

Finally, given \cref{claim:alpha,claim:beta}, we can determine whether there is a 2-hub at distance $2q$ from $v^*$ simply by checking whether $\eta_{4q+2}'$ is divisible by 8 or not. This completes the inductive step.
\end{proof}
\section{Determining bipartiteness with the signless Laplacian spectrum}\label{sec:SL}
In this section we prove \cref{lem:nice-SLDS}. This proof mostly comes down to the following two lemmas.%It is a more-or-less immediate consequence of the following two lemmas.
\begin{definition}\label{def:f-SL}
    For any graph $G$, let $f_{|\mr{L}|}(G)$ be the product of nonzero eigenvalues of $|\mr{L}(G)|$. 
\end{definition}
\begin{lemma}\label{lem:new-SL-bipartite}
     If $G$ is not bipartite, then $f_{|\mr{L}|}(G)$ is divisible by 4.
\end{lemma}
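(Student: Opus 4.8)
The statement to prove is \cref{lem:new-SL-bipartite}: if $G$ is not bipartite, then $f_{|\mr{L}|}(G)$ (the product of nonzero eigenvalues of $|\mr{L}(G)|$) is divisible by $4$. The natural approach is to reduce to connected components and then invoke the combinatorial description of the characteristic coefficients of the signless Laplacian from \cref{thm:characteristic-coefficients-description} (the TU-subgraph formula), together with the already-proved \cref{thm:SL-bipartite}.

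\textbf{First step: reduce to a single connected component.} By \cref{fact:disjoint-union}, the signless Laplacian spectrum of $G$ is the multiset union of the spectra of its connected components, so $f_{|\mr{L}|}(G)$ is the product of $f_{|\mr{L}|}(G_i)$ over the connected components $G_i$ of $G$. Each factor $f_{|\mr{L}|}(G_i)$ is the largest nonzero characteristic coefficient of $|\mr{L}(G_i)|$, which by \cref{thm:characteristic-coefficients-description} is a sum of terms $\alpha(H)$ over TU-subgraphs $H$; since each $\alpha(H)$ is a positive integer, each $f_{|\mr{L}|}(G_i)$ is a positive integer. Since $G$ is not bipartite, at least one component $G_i$ is not bipartite, and it suffices to show that $f_{|\mr{L}|}(G_i)$ is divisible by $4$ for that component. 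So we may assume $G$ is connected and non-bipartite.

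\textbf{Second step: identify the relevant characteristic coefficient and show it is divisible by $4$.} Let $n$ be the number of vertices of the connected non-bipartite graph $G$. I would argue that $f_{|\mr{L}|}(G)$ equals a characteristic coefficient $\zeta_s$ where $s \in \{n-1, n\}$. By \cref{thm:SL-bipartite}(2), if $G$ is non-bipartite then $\det(|\mr{L}(G)|) = \zeta_n$ is a \emph{positive} integer (in particular nonzero), so in fact $f_{|\mr{L}|}(G) = \zeta_n = \det(|\mr{L}(G)|)$. Now apply \cref{thm:characteristic-coefficients-description}: $\zeta_n = \sum_{H \in \Psi_n(G)} \alpha(H)$, where $\Psi_n(G)$ is the set of spanning TU-subgraphs with $n$ edges. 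A spanning TU-subgraph of $G$ has $n$ vertices; its tree components contribute (vertices $=$ edges $+1$ each) and its odd-unicyclic components contribute (vertices $=$ edges each), so having exactly $n$ edges forces the subgraph to have \emph{no tree components at all} — every connected component must be odd-unicyclic. Hence in the formula $\alpha(H) = 4^c \prod_i n_i$ there are no tree factors ($s = 0$ in that notation) and $c \ge 1$ (since $G$ is connected and non-bipartite, there is at least one such spanning subgraph, obtained by deleting edges outside a fixed odd cycle, exactly as in the proof of \cref{thm:SL-bipartite}). Therefore every term $\alpha(H) = 4^c$ with $c \ge 1$ is divisible by $4$, and the sum $\zeta_n = f_{|\mr{L}|}(G)$ is divisible by $4$.

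\textbf{Main obstacle.} There is no serious obstacle here: the essential content is already packaged in \cref{thm:SL-bipartite} and \cref{thm:characteristic-coefficients-description}. The one point requiring a little care is the bookkeeping that a spanning TU-subgraph with exactly $n = |V(G)|$ edges must consist purely of odd-unicyclic components (no tree components), which is the combinatorial fact driving the argument; this is an immediate consequence of comparing edge and vertex counts component by component. The rest is just assembling the multiplicativity over components and the observation that $f_{|\mr{L}|}$ of a non-bipartite connected graph is literally its determinant.
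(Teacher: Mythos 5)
Your proof is correct and follows essentially the same route as the paper's: reduce to a non-bipartite connected component via \cref{fact:disjoint-union} and integrality of the other factors, then get divisibility by $4$ from the TU-subgraph description of the top characteristic coefficient (which is exactly the content of \cref{thm:SL-bipartite}(2), which you partly re-derive rather than just cite). Your unpacking is, if anything, slightly more careful in noting that a spanning TU-subgraph with $n$ edges may have several odd-unicyclic components, giving $\alpha(H)=4^c$ with $c\ge 1$, which still yields the claim.
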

\begin{proof}
Let $G_1,\dots,G_c$ be the connected components of a non-bipartite graph $G$, and suppose without loss of generality that $G_1$ is non-bipartite. By \cref{fact:integer-coefficient}, each $f_{|\mr{L}|}(G_i)$ is an integer, and by \cref{fact:disjoint-union} we have $f_{|\mr{L}|}(G)=f_{|\mr{L}|}(G_1)\dots f_{|\mr{L}|}(G_c)$. By \cref{thm:SL-bipartite}(2), $f_{|\mr{L}|}(G_1)$ is divisible by 4.
\end{proof}
\begin{lemma}\label{lem:unicyclic-f}
     If $G$ is a connected bipartite unicyclic graph with $n$ vertices, whose cycle has length $\ell$, then $f_{|\mr{L}|}(G)=n\ell$.
\end{lemma}
\begin{proof}
Since $G$ is bipartite, its signless Laplacian spectrum is the same as its Laplacian spectrum (by \cref{fact:bipartite-L-SL}), so by Kirchhoff's matrix tree theorem (\cref{thm:kirchhoff}), $f_{|\mr{L}|}(G)$ is $n$ times the number of spanning trees in $G$ (which is $\ell$, as we have already observed in the proof of \cref{lem:almost-nice}).
%It is not itself a tree, but has $\ell$ different spanning trees (obtained by deleting different edges on its unique cycle). So, by \cref{thm:characteristic-coefficients-description} (considering the $n$-th and the $(n-1)$-th characteristic coefficients), $|\mr{L}(G)|$ has zero as an eigenvalue with multiplicity 1, and its product of nonzero eigenvalues is $n\ell$, as desired.
\end{proof}
Now we are ready to prove \cref{lem:nice-SLDS}.
\begin{proof}[Proof of \cref{lem:nice-SLDS}]
Let $G$ be an $(\ell,k)$ nice graph, for $\ell\equiv2\pmod 4$, and let $H$ be a graph with the same signless Laplacian spectrum as $G$. As discussed in \cref{subsec:SLDS}, given \cref{lem:nice-LDS,fact:bipartite-L-SL}, we just need to prove that $H$ is bipartite.

Let $n=\ell+2k+1$ be the number of vertices in $G$. By \cref{lem:unicyclic-f}, we have $f_{|\mr{L}|}(G)=n\ell$. Since $\ell\equiv 2\pmod 4$ and $n=\ell+2k+1$ is odd, $f_{|\mr{L}|}(G)$ is not divisible by 4. Since $G$ and $H$ have the same spectrum, we have $f_{|\mr{L}|}(G)=f_{|\mr{L}|}(H)$, so \cref{lem:new-SL-bipartite} implies that $H$ is bipartite.
\end{proof}
\section{The prime case of the main theorem}\label{sec:adjacency-prime}
In this section we prove \cref{lem:A-prime}. As outlined in \cref{subsec:DS}, we will use the Cameron--Goethals--Seidel--Shult theorem (\cref{thm:CGSS}), together with the following fact. Recall the definition of a generalised line graph from \cref{def:GL}.
\begin{lemma}\label{lem:generalised-line-graph-nonzero}
If a generalised line graph is not a line graph, then its adjacency matrix has a zero eigenvalue.
\end{lemma}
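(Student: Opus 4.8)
The plan is to exploit the structure of a generalised line graph $Q = \on{line}(G; a_1, \dots, a_n)$ in which at least one $a_i$ is positive, and to exhibit an explicit vector in the kernel of $\mr{A}(Q)$. Recall from \cref{def:GL} that $\on{CP}(a_i) = \on{line}(G; a_1, \dots, a_n)$ contains, as an induced subgraph, a cocktailparty graph $\on{CP}(a_i) = K_{2a_i}$ minus a perfect matching; if $a_i \geq 1$ then $\on{CP}(a_i)$ has at least two vertices, and these come in ``antipodal pairs'' $\{p, p'\}$ (the two endpoints of a removed matching edge). The key combinatorial observation is that such an antipodal pair has \emph{the same neighbourhood} in all of $Q$ except for each other: within $\on{CP}(a_i)$, the vertex $p$ is adjacent to every vertex except $p'$, as is $p'$; and the edges added between $\on{CP}(a_i)$ and $\on{line}(G)$ attach \emph{all} of $\on{CP}(a_i)$ to the same set of $\on{line}(G)$-vertices (those corresponding to edges of $G$ incident to $v_i$). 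So $p$ and $p'$ are nonadjacent twins.

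The next step is the standard fact that a pair of nonadjacent twins produces a zero eigenvector. Concretely, let $e_p$ and $e_{p'}$ be the standard basis vectors indexed by $p$ and $p'$, and consider $u = e_p - e_{p'}$. For any vertex $w \notin \{p, p'\}$, the $w$-row of $\mr{A}(Q)$ has equal entries in columns $p$ and $p'$ (since $w$ is adjacent to $p$ iff adjacent to $p'$), so $(\mr{A}(Q) u)_w = 0$. For the $p$-row: $p$ is not adjacent to $p'$ and not adjacent to itself, so the $p$-entry of $\mr{A}(Q)u$ is $A_{pp} - A_{p p'} = 0 - 0 = 0$; symmetrically for the $p'$-row. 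Hence $\mr{A}(Q) u = 0$ with $u \neq 0$, so $0$ is an eigenvalue of $\mr{A}(Q)$.

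Finally I would justify the hypothesis ``not a line graph'' is exactly what lets us find such a pair: if every $a_i = 0$ then $Q = \on{line}(G)$ is a genuine line graph and the argument does not apply; conversely, if some $a_i \geq 1$, the component $\on{CP}(a_i)$ exists and carries the required nonadjacent twin pair, so $Q$ — being a generalised line graph that is not a line graph — must have some $a_i \geq 1$, and we are done. (One might double-check the degenerate possibility $a_i = 1$: then $\on{CP}(1) = K_2$ minus its only edge, i.e.\ two isolated vertices, which are still nonadjacent and still attached to exactly the same $\on{line}(G)$-vertices — so they are nonadjacent twins and the argument goes through; if instead one adopts a convention where $\on{CP}(1)$ is a single vertex or empty, one should note $a_i \geq 2$ suffices for the construction and handle the convention accordingly.)

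I do not expect a serious obstacle here; the only point requiring care is the bookkeeping of exactly which edges \cref{def:GL} adds between $\on{CP}(a_i)$ and $\on{line}(G)$, to confirm that both members of an antipodal pair receive identical external neighbourhoods — this is immediate from the definition, which attaches \emph{all} vertices of $\on{CP}(a_i)$ to the same target set, but it is the one place the argument could be derailed by a misreading of the construction.
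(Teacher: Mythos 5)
Your proposal is correct and follows essentially the same route as the paper: both identify an antipodal pair in some $\on{CP}(a_i)$ with $a_i\ge 1$ as nonadjacent twins with identical neighbourhoods, the paper concluding via two equal rows of $\mr{A}(Q)$ and you via the explicit kernel vector $e_p-e_{p'}$, which is the same observation in a different guise. Your extra care about the degenerate case $a_i=1$ and about which edges the construction adds is sound but not needed beyond what the definition already makes immediate.
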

\begin{proof}
    Let $G$ be a generalised line graph that is not a line graph. We will show that $G$ has two vertices with the same set of neighbours, meaning that $\mr{A}(G)$ has two equal rows, so is not invertible and therefore has a zero eigenvalue.
    
    By the definition of a generalised line graph, $G$ contains a cocktailparty graph $\on{CP}(a)$ for some $a\ge 1$. This cocktailparty graph can be thought of as a complete graph $K_{2a}$ with a perfect matching removed. Consider one of the edges of this removed perfect matching, and let $u$ and $v$ be its endpoints. Then, $u$ and $v$ have the same neighbourhood (in $G$), as desired.
\end{proof}
Now, crucially, line graphs of nice graphs as in \cref{lem:A-prime} do not have zero eigenvalues.
\begin{lemma}\label{lem:determinant}
Let $G$ be an $(\ell,k)$-nice graph with $\ell\equiv 2\pmod 4$. Then $\mr{A}(\on{line}(G))$ does not have a zero eigenvalue.
\end{lemma}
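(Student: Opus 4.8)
The plan is to compute the determinant of $\mr{A}(\on{line}(G))$ (equivalently, to show it is nonzero) by exploiting the structure of a nice graph. By \cref{lem:-2-multiplicity}, for a connected bipartite unicyclic graph $H$ the multiplicity of $-2$ in $\mr{A}(\on{line}(H))$ is $e-v+1 = 1$ since $e=v$; so $-2$ \emph{is} always an eigenvalue of $\mr{A}(\on{line}(G))$. Hence it is cleaner to work with the signless Laplacian: by \cref{prop:A-SL-line-graph}, $0$ is an eigenvalue of $\mr{A}(\on{line}(G))$ if and only if $-2$ is, i.e. if and only if $2$ is an eigenvalue of $|\mr{L}(G)|$. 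So the real task is: \emph{show that $2$ is not an eigenvalue of $|\mr{L}(G)|$}, equivalently that $\det\bigl(|\mr{L}(G)| - 2I\bigr)\ne 0$. Since $G$ is bipartite, $|\mr{L}(G)|$ has the same spectrum as $\mr{L}(G)$ (by \cref{fact:bipartite-L-SL}), so it suffices to show $\det\bigl(\mr{L}(G)-2I\bigr)\ne 0$, i.e. that $2$ is not a Laplacian eigenvalue of $G$.

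To do this I would compute the characteristic polynomial of $\mr{L}(G)$ (or at least evaluate it at $x=2$) via a transfer-matrix / recurrence argument that walks around the cycle $C$. Concretely, $G$ is a cycle of length $\ell$ with $k$ pendant leaves at the $2$-hubs and $1$ pendant leaf at the $1$-hub. For a pendant leaf attached at a vertex $u$, one can eliminate the leaf variable: if $\phi$ is the eigenvector equation $(\mr{L}(G)-xI)\phi = 0$, the leaf $z$ satisfies $(1-x)\phi(z) = \phi(u)$, so $\phi(z) = \phi(u)/(1-x)$ (provided $x\ne 1$), and substituting back modifies the diagonal entry at $u$. After this reduction we are left with an eigenvalue-type problem on the cycle $C_\ell$ with modified diagonal weights $d_u - x - a_u/(1-x)$, where $a_u \in \{0,1,2\}$ is the number of leaves at $u$. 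This is a periodic Jacobi-type tridiagonal determinant around a cycle, which can be evaluated with a product of $2\times 2$ transfer matrices: $\det(\mr{L}(G)-xI) = (1-x)^{\#\text{leaves}}\bigl(\on{trace}(T_\ell\cdots T_1) - 2(-1)^{\ell}\cdot(\text{off-diag product})\bigr)$ or a similar closed form. Setting $x=2$ makes $(1-x) = -1$, so no singularity, and the transfer matrices at $x=2$ have small integer entries; the $4$-or-$6$ spacing between hubs means only finitely many matrix types appear, and one should get a clean arithmetic expression. The hypothesis $\ell\equiv 2\pmod 4$ should be exactly what is needed to make this expression nonzero (presumably it controls a sign, via the $(-1)^\ell$ and the parity of the number of hubs / the segment lengths).

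The main obstacle I anticipate is carrying out the transfer-matrix computation at $x=2$ cleanly enough to see that the result is nonzero (rather than just "generically nonzero"): one has to track the contribution of each cycle-segment of length $4$ or $6$ between consecutive hubs and of the hub vertices themselves, then combine them around the cycle, being careful about the cyclic boundary term. An alternative, possibly slicker, route that avoids transfer matrices: show directly that $2$ cannot be a Laplacian eigenvalue by a combinatorial eigenvector argument — suppose $\mr{L}(G)\phi = 2\phi$; analyze $\phi$ on the leaves ($\phi(z) = -\phi(u)$ at a leaf of $u$), then on the paths between hubs (where $\phi$ satisfies a linear recurrence with characteristic roots that are roots of unity, forcing a rigid sinusoidal form), and derive a contradiction with the hub equations and the global constraint coming from $\ell\equiv 2\pmod 4$. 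Either way, the content is the same: reduce to an arithmetic statement about the cycle $C_\ell$ with the prescribed hub pattern, and use $\ell\equiv 2\pmod 4$ to close it. If neither closed form is pleasant, one can fall back on the determinant formula of \cref{thm:characteristic-coefficients-description} for $\mr{A}(\on{line}(G))$ directly — expanding the adjacency determinant of $\on{line}(G)$ over elementary subgraphs (disjoint unions of edges and cycles in $\on{line}(G)$) — but this seems messier than the transfer-matrix reduction on $G$ itself.
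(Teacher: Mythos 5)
Your reduction is sound and takes a genuinely different route from the paper. Via \cref{prop:A-SL-line-graph} (applied with $\lambda=2\ne 0$) and \cref{fact:bipartite-L-SL}, the lemma is indeed equivalent to the statement that $2$ is not a Laplacian eigenvalue of $G$ itself, and your transfer-matrix plan for proving that is viable. The paper instead works directly on $\on{line}(G)$: it expands $\det\mr{A}(\on{line}(G))$ over spanning elementary subgraphs via \cref{thm:characteristic-coefficients-description}, sets up a recurrence for path-with-houses graphs (\cref{lem:Q-determinant}, \cref{cor:Q-determinant}), and finds that the determinant equals $\pm 4$. (One slip in your write-up: the clause ``$0$ is an eigenvalue of $\mr{A}(\on{line}(G))$ if and only if $-2$ is'' is garbled; the correct reading of \cref{prop:A-SL-line-graph} is that $0\in\operatorname{spec}(\mr{A}(\on{line}(G)))$ iff $2\in\operatorname{spec}(|\mr{L}(G)|)$. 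The conclusion you draw is nevertheless the right one.)

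The genuine gap is that you stop exactly where the content of the lemma lives: you never carry out the computation showing the cyclic system has no nonzero solution, and you only assert that $\ell\equiv 2\pmod 4$ ``should'' control a sign. This cannot be waved through, because the statement is \emph{false} for $\ell\equiv 0\pmod 4$ (the sign flips and $2$ becomes an actual Laplacian eigenvalue), so everything hinges on that sign. The calculation does close, and cleanly, so you should include it. At $x=2$ each leaf gives $\phi(z)=-\phi(u)$; a non-hub cycle vertex gives the transfer matrix $J=\begin{psmallmatrix}0&1\\-1&0\end{psmallmatrix}$ and a hub with $a$ leaves gives $T_a=\begin{psmallmatrix}0&1\\-1&2a\end{psmallmatrix}$. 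A nonzero eigenvector exists iff the monodromy $M=J^{\ell-1-b_k}T_2J^{b_k-b_{k-1}-1}T_2\cdots T_2J^{b_1-1}T_1$ fixes a nonzero vector, where $b_1<\dots<b_k$ are the positions of the $2$-hubs. Every exponent of $J$ appearing here is odd (gaps are $4$ or $6$, and $\ell-1-b_k$ is odd since $\ell$ and $b_k$ are even), and $J^{2m+1}=(-1)^mJ$, while $JT_a=-\begin{psmallmatrix}1&-2a\\0&1\end{psmallmatrix}$; multiplying out, $M=(-1)^{\ell/2}\begin{psmallmatrix}1&-(4k+2)\\0&1\end{psmallmatrix}$. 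For $\ell\equiv 2\pmod 4$ both eigenvalues of $M$ are $-1$, so no nonzero fixed vector exists and you are done; for $\ell\equiv 0\pmod 4$ the matrix fixes $(1,0)^{\top}$ and the lemma fails. Without this (or an equivalent) verification, the proposal proves nothing beyond the reformulation.
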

We will prove \cref{lem:determinant} by explicitly computing the determinant of $\mr{A}(\on{line}(G))$ using \cref{thm:characteristic-coefficients-description}. We defer this computation to \cref{subsec:determinant}, as it is a little involved; first we show how to use it to prove \cref{lem:A-prime} (after stating a definition that will be used in the proofs of \cref{lem:A-prime,thm:main}).
\begin{definition}\label{def:f-A}
    For any graph $G$, let $f_{\mr{A}}(G)$ be the product of nonzero eigenvalues of $\mr{A}(G)+2$. Equivalently, writing $\sigma$ for the adjacency spectrum of $G$,
    \[f_{\mr{A}}(G)=\prod_{\substack{\lambda\in \sigma\\\lambda\ne -2}}(\lambda+2).\]
\end{definition}
% An immediate consequence of \cref{prop:A-SL-line-graph} is the following fact.
% \begin{fact}\label{fact:f-A-SL}
%     For any graph $G$, we have $f_A(\on{line}(G))=f_{|\mr{L}|}(G)$.
% \end{fact}
\begin{proof}[Proof of \cref{lem:A-prime} assuming \cref{lem:determinant}]
Consider $\ell,k$ with $\ell\equiv 2\pmod 4$, and let $n=\ell+2k+1$. Define
\begin{equation}
n_0=\max\{f_{|\mr{A}|}(Q):Q\text{ is a graph on at most 36 vertices}\},\label{eq:n0}
\end{equation} and suppose $n$ is a prime number larger than $n_0$.

Let $G$ be an $(\ell,k)$-nice graph, and suppose that $Q$ is a graph with the same adjacency spectrum as $\on{line}(G)$. Our objective is to prove that $Q=\on{line}(H)$ for some graph $H$ with $n$ vertices. Indeed, if we are able to prove this, it will follow from \cref{prop:A-SL-line-graph} that $H$ has the same nonzero signless Laplacian eigenvalues as $G$, and since $H$ and $G$ have the same number of vertices, the multiplicity of the zero eigenvalue will also be the same in $H$ and $G$. It will then follow that $H$ and $G$ are isomorphic (hence $Q$ and $\on{line}(G)$ are isomorphic) by \cref{lem:nice-SLDS}.

Write $Q_1,\dots,Q_c$ for the connected components of $Q$. By \cref{fact:integer-coefficient}, each $f_{\mr{A}}(Q_i)$ is an integer, and by \cref{fact:disjoint-union} we have $f_{\mr{A}}(Q_1)\dots f_{\mr{A}}(Q_c)=f_{\mr{A}}(Q)$. On the other hand, by \cref{prop:A-SL-line-graph,lem:unicyclic-f},
\begin{equation}f_{\mr{A}}(Q)=f_{\mr{A}}(\on{line}(G))=f_{|\mr{L}|}(G)=n\ell.\label{eq:f(Q)}\end{equation} Recalling that $n$ is a prime number, some $f_{\mr{A}}(Q_i)$ must be divisible by $n$. Suppose without loss of generality that
\begin{equation}f_{\mr{A}}(Q_1)\text{ is divisible by }n.\label{eq:divisible-by-n}\end{equation}

By the Cameron--Goethals--Seidel--Shult theorem (\cref{thm:CGSS}), \cref{lem:determinant}, and our assumption $n>n_0$ from the start of the proof, $Q_1$ is a line graph. We write $Q_1=\on{line}(H_1)$ for some (connected) graph $H_1$, with $v_1$ vertices and $e_1$ edges. Note that
\begin{equation}e_1\le n,\label{eq:m1-n}\end{equation}
because $Q_1$ has $e_1$ vertices and is a connected component of $Q$, which has $n$ vertices (note that $Q$ has the same number of vertices as $\on{line}(G)$, which is $n$ because $G$ has $n$ edges).

Now, by \cref{prop:A-SL-line-graph} we have $f_{\mr A}(Q_1)=f_{|\mr L|}(H_1)$. This cannot be divisible by 4, because $f_{\mr{A}}(Q)=n\ell$ is not divisible by 4 (here we are recalling \cref{eq:f(Q)}, and using that $n$ is odd and $\ell\equiv 2\pmod 4$). So, by \cref{lem:new-SL-bipartite}, $H_1$ is bipartite.

By \cref{lem:-2-multiplicity}, $\mr{A}(Q)$ has $-2$ as an eigenvalue with multiplicity 1, so (using \cref{fact:disjoint-union}), either $-2$ is not an eigenvalue of $\mr{A}(Q_1)$ or it is an eigenvalue with multiplicity 1. %By \cref{lem:-2-multiplicity}, there are two possible cases.
% \begin{enumerate}
% 		\item[\textbf{1.}]  $-2$ is not an eigenvalue of $\mr{A}(Q_1)$, so $m_1=n_1-1$, and $H_1$ is a tree.
% 		\item[\textbf{2.}] $-2$ is an eigenvalue of $\mr{A}(Q_1)$, so $m_1=n_1$, and $H_1$ is an even-unicyclic graph.
% 	\end{enumerate} 
% We will show that the first case is impossible, and in the second case, $H$ must have $n$ vertices.

\medskip\noindent
\textbf{Case 1: $-2$ is not an eigenvalue of $\mr{A}(Q_1)$.} In this case, \cref{lem:-2-multiplicity} says that $e_1=v_1-1$, and $H_1$ is a tree. The largest TU-subgraph of $H_1$ is $H_1$ itself, so by \cref{thm:characteristic-coefficients-description,prop:A-SL-line-graph} we have $f_{\mr A}(Q_1)=f_{|\mr L|}(H_1) =  v_1$.
Then, \cref{eq:divisible-by-n} says that $v_1$ is divisible by $n$. \cref{eq:m1-n} says that $v_1-1\le n$, so we must have $v_1=n$. It follows that $Q_1=\on{line}(H_1)$ has $e_1=n-1$ vertices, meaning that $Q$ only has room for one other component $Q_2$, consisting of a single isolated vertex. We then compute $f_{\mr A}(Q_2)=1$, so $f_{\mr A}(Q)=f_{\mr A}(Q_1)f_{\mr A}(Q_2)=v_1=n$. This is not consistent with the fact that $f_{\mr A}(Q)=n\ell$ (as we observed in \cref{eq:f(Q)}), so this case cannot actually occur.

% \medskip
% \textbf{Case 2: $H_1$ is odd-unicyclic.} In this case, once again the largest TU-subgraph of $H_1$ is $H_1$ itself, so by \cref{thm:characteristic-coefficients-description,prop:A-SL-line-graph} we have $f_{\mr A}(Q_1)=f_{|\mr L|}(H_1) =  4$. This is again inconsistent with \cref{eq:f(Q)}: $f_{\mr A}(H) = n\ell$ is not divisible by $4$ (since $n$ is odd and $\ell=2\pmod 4$). So, this case also cannot occur.

\medskip\noindent
\textbf{Case 2: $-2$ is an eigenvalue of $\mr{A}(Q_1)$.} In this case, \cref{lem:-2-multiplicity} says that $e_1=v_1$, and $H_1$ is an even-unicyclic graph. Let $\ell_1$ be the length of the cycle in $H_1$, so by \cref{lem:unicyclic-f} we have $f_{\mr A}(Q_1)=f_{|\mr L|}(H_1) = v_1\ell_1$.

By \cref{eq:m1-n} we have $\ell_1 \le v_1 \le n$, and \cref{eq:divisible-by-n} says that $v_1\ell_1$ is divisible by the prime number $n$. So, we must have $v_1 = n$. Since $Q_1=\on{line}(H_1)$ has $e_1=v_1=n$ vertices, there is no room for any other components: we have proved that $Q=Q_1=\on{line}(H_1)$ for some $H_1$ with $n$ vertices, as desired.
%\medskip
%\textbf{Case 4: $m_1=n_1+2$, and $H_1$ is not bipartite.}
\end{proof}
\subsection{Computing the determinant of the line graph of a nice graph}\label{subsec:determinant}

In this subsection we prove \cref{lem:determinant}. First, we need some definitions that allow us to discuss the structure of the line graph of a nice graph.
\begin{definition}
Let $uv$ be an edge in a graph $Q$. To \emph{add an $i$-house}
to $uv$ is to add a set $S$ of $i$ new vertices to $Q$, and to
add all possible edges between vertices in $S\cup\{u,v\}$. Then,
we say that the subgraph induced by $S\cup\{u,v\}$ (which is a complete
graph on $i+2$ vertices) is an \emph{$i$-house}. The vertices $u,v$
are \emph{internal} and the vertices in $S$ are \emph{external}.
\end{definition}

Note that the line graph of every $(\ell,k)$-nice graph (as defined
in \cref{def:nice}) can be obtained by starting with a cycle of length $\ell$,
then adding a 1-house to one edge and adding 2-houses to $k$ other
edges. The distances between pairs of consecutive $i$-houses are
always $3$ or $5$ (except one longer distance around the cycle). See \cref{fig:line-nice} for an illustration.

Now, our objective is to compute the determinant of the line graph
of a nice graph. We will be able to reduce this to computing the determinant
of a slightly simpler type of graph, which can be studied recursively.

Recall from \cref{def:subgraph-types} that a spanning elementary subgraph of a graph $G$ is a spanning subgraph (covering all vertices)
consisting of vertex-disjoint edges and cycles. For such a subgraph
$X$, recall that $\beta(X)$ accumulates a factor of $-1$ for each
edge-component, and a factor of $-2$ for each cycle-component. By
\cref{thm:characteristic-coefficients-description}, the determinant of $\on{A}(G)$ is (up to sign) the sum of $\beta(X)$ over all spanning
elementary subgraphs $X$ of $G$.
\begin{definition}
\label{def:Q-graph}Consider $r\ge 0$ and $k\ge0$, and $1\le a_{1}\le\dots\le a_{k}\le r$
satisfying $a_{i}-a_{i-1}\ge2$ for each $2\le i\le k$. The graph
$Q(r;a_{1},\dots,a_{k})$ is defined by starting with a path of length
$r$, and adding a 2-house on the $a_{i}$-th
edge of this path, for each $i$. (See \cref{fig:Q-graph} for an illustration). Let $q(r;a_{1},\dots,a_{k})$ be the sum of $\beta(X)$ over all spanning elementary subgraphs $X$ of $Q(r;a_{1},\dots,a_{k})$.
\end{definition}

\begin{figure}
\centering
\begin{tikzpicture}[scale=0.7]
\node[vertex] (1) at (1, 0) {};
\node[vertex] at (2, 0) {};
\node[vertex] (2) at (3, 0) {};
\node[vertex] (3) at (4, 0) {};
\node[vertex] at (5, 0) {};
\node[vertex] at (6, 0) {};
\node[vertex] (4) at (7, 0) {};
\node[vertex] (5) at (8, 0) {};
\node[vertex] at (9, 0) {};
\node[vertex] at (10, 0) {};
\node[vertex] at (11, 0) {};
\node[vertex] at (12, 0) {};
\node[vertex] (6) at (13, 0) {};
\node[vertex] (7) at (14, 0) {};

\node[vertex] (2a) at (3,1) {};
\node[vertex] (3a) at (4,1) {};
\node[vertex] (4a) at (7,1) {};
\node[vertex] (5a) at (8,1) {};
\node[vertex] (6a) at (13,1) {};
\node[vertex] (7a) at (14,1) {};

\draw (1) -- (7) -- (7a) -- (6a) -- (6) -- (7a);
\draw (7) -- (6a);
\draw (2) -- (2a) -- (3a) -- (3) -- (2a);
\draw (3a) -- (2);
\draw (4) -- (4a) -- (5a) -- (5) -- (4a);
\draw (5a) -- (4);
\end{tikzpicture}

\caption{\label{fig:Q-graph}An illustration of the graph $Q(13;3,7,13)$, with $2$-houses on the third, seventh, and thirteenth edges of the underlying path.}
\end{figure}
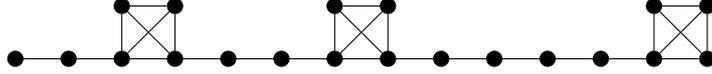

\begin{lemma}
\label{lem:Q-determinant}Let $r,k,a_{1},\dots,a_{k}$ be as in \cref{def:Q-graph}. For inductive reasons it is convenient to additionally allow $r=-1$ (in which case $Q(r)$ is the graph with no vertices).
\begin{enumerate}
\item Taking $k=0$, we have $q(-1)=1$ and $q(0)=0$.
\item If $r-a_{k}\ge2$ (or if $r\ge2$ and $k=0$), then $q(r;a_{1},\dots,a_{k})=-q(r-2;a_{1},\dots,a_{k}).$
\item If $r-a_{k}=1$ then $q(r-2;a_{1},\dots,a_{k-1})+2q(r-3;a_{1},\dots,a_{k-1})$.
\item If $r=a_{k}$ then $q(r;a_{1},\dots,a_{k-1})=-2q(r-1;a_{1},\dots,a_{k-1})-3q(r-2;a_{1},\dots,a_{k-1}).$
\end{enumerate}
\end{lemma}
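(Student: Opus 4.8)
The plan is to compute $q(r; a_1, \dots, a_k)$ by peeling off vertices from the right-hand end of the underlying path and classifying spanning elementary subgraphs according to which edges at that end are used. Recall that a spanning elementary subgraph must cover every vertex by vertex-disjoint edges and cycles, and $\beta(X) = (-1)^c(-2)^d$ where $c,d$ count edge-components and cycle-components. The base cases in (1) are immediate: $Q(-1)$ has no vertices, so the unique (empty) spanning elementary subgraph has $\beta = 1$; and $Q(0)$ is a single vertex, which cannot be covered by edges or cycles, so there are no spanning elementary subgraphs and the sum is $0$.

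For case (2), where the last $2$-house (if any) is at distance $\ge 2$ from the end, let $w$ be the final vertex of the path and $w'$ its unique neighbour. Since $w$ has degree $1$ in $Q(r; \dots)$, in any spanning elementary subgraph $X$ the vertex $w$ must be covered by the edge $ww'$ (it cannot lie on a cycle, having degree $1$). Then $w'$ is used up, and because the $2$-house is at distance $\ge 2$, the vertex two steps back, call it $w''$, still has degree $1$ in the graph $Q(r-2;\dots)$ obtained by deleting $w, w'$. So $X \mapsto X \setminus \{ww'\}$ is a bijection onto spanning elementary subgraphs of $Q(r-2; a_1,\dots,a_k)$, and it removes exactly one edge-component, contributing the factor $-1$. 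This gives $q(r;\dots) = -q(r-2;\dots)$; the same argument handles $r \ge 2$, $k=0$.

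For cases (3) and (4) — the main obstacle, since here the end of the path abuts a $2$-house — I would do a careful case analysis on how the spanning elementary subgraph covers the external vertices of the last $2$-house and the $\le 3$ path vertices nearest the end. In case (4), $r = a_k$: the $2$-house sits on the very last edge $w'w$, so it is a $K_4$ on $\{w', w, s_1, s_2\}$ where $s_1, s_2$ are external and $w'$ is the internal vertex shared with the rest of the path (at position $a_k - 1$). The degree-$2$ vertices $s_1, s_2$ and $w$ must be covered within this $K_4$; enumerating the ways (a single $4$-cycle through all of $w', w, s_1, s_2$, which also frees $w'$ and contributes $-2$; two disjoint edges within the $K_4$, of which one must or must not use $w'$, etc.) and tracking in each case whether the path-vertex $w'$ is consumed or left to be covered by $Q(r-1; a_1,\dots,a_{k-1})$ or $Q(r-2;a_1,\dots,a_{k-1})$, and tallying the $\beta$-factors, should produce exactly the stated combination $-2q(r-1;\dots) - 3q(r-2;\dots)$. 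Case (3), where $r - a_k = 1$, is similar but with one extra pendant vertex $w$ hanging off the internal vertex at position $a_k$: now $w$ forces an edge, and the remaining analysis of the $2$-house on the edge at position $a_k$ (one step in from the end) yields $q(r-2; a_1,\dots,a_{k-1}) + 2q(r-3; a_1,\dots,a_{k-1})$. The bookkeeping of signs and the $(-2)$ weights from cycle-components is the delicate part; I would organize it as a short table of sub-cases for the local configuration near the terminal $2$-house, checking that no spanning elementary subgraph is counted twice or omitted.
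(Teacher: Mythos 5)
Your overall strategy is exactly the paper's: peel off the right-hand end of the path, classify spanning elementary subgraphs by how they cover the terminal vertex and (in cases (3) and (4)) the external vertices of the last $2$-house, and read off the recursion from the $\beta$-factors. Parts (1) and (2) are complete and correct as written.

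The issue is that for (3) and (4) --- which carry essentially all of the content of the lemma --- you stop at the level of a plan: you assert that the enumeration ``should produce'' $-2q(r-1;\dots)-3q(r-2;\dots)$ and ``yields'' $q(r-2;\dots)+2q(r-3;\dots)$ without actually listing the sub-cases and tallying the weights. Moreover, the one sub-case you do describe explicitly in (4) is misstated: a $4$-cycle through all of $w',w,s_1,s_2$ \emph{consumes} $w'$ (leaving $Q(r-2;a_1,\dots,a_{k-1})$); the configuration that frees $w'$ is the $3$-cycle on $\{w,s_1,s_2\}$, which contributes the $-2\,q(r-1;a_1,\dots,a_{k-1})$ term. (Also, $s_1,s_2$ have degree $3$, not $2$, though your conclusion that they must be covered inside the $K_4$ is still right.) To close the gap you need the actual count: for (4), one $3$-cycle on $\{w,s_1,s_2\}$ giving $-2$ times $q(r-1;\dots)$, plus the coverings of the whole $K_4$ --- three perfect matchings each contributing $(+1)$ and three $4$-cycles each contributing $(-2)$, totalling $3-6=-3$ times $q(r-2;\dots)$; and for (3), after the forced pendant edge ($-1$), either the edge $s_1s_2$ ($-1$, giving $(-1)^2 q(r-2;\dots)$) or the $3$-cycle through $s_1,s_2$ and the adjacent path vertex ($-2$, giving $(-1)(-2)q(r-3;\dots)$). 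With that table filled in, your argument coincides with the paper's.
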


\begin{proof}
First, (1) is an immediate observation.

% Then, recall from \cref{def:subgraph-types} that a spanning elementary subgraph of
% $Q(r;a_{1},\dots,a_{k})$ is a spanning subgraph (covering all vertices)
% consisting of vertex-disjoint edges and cycles. For such a subgraph
% $X$, recall that $\beta(X)$ accumulates a factor of $-1$ for each
% edge-component, and a factor of $-2$ for each cycle-component. By
% \cref{thm:characteristic-coefficients-description}, $q(r;a_{1},\dots,a_{k})$ is the sum of $\beta(X)$ over all spanning
% elementary subgraphs $X$ of $Q(r;a_{1},\dots,a_{k})$.

If $a_{k}<r$ (or if $r\ge 1$ and $k=0$), then the final vertex in $Q(r;a_{1},\dots,a_{k})$
has degree 1. In an elementary spanning subgraph, this final vertex
can only be contained in an edge-component, consisting of the final
two vertices of $Q(a_{1},\dots,a_{k})$.

In particular, if $r-a_{k}\ge2$ (or if $r\ge 2$ and $k=0$), the spanning elementary subgraphs
of $Q(a_{1},\dots,a_{k})$ can be obtained by taking a spanning elementary
subgraph of $Q(a_{1},\dots,a_{k}-2)$, and adding a single edge-component
(see \cref{fig:Q-cases}). We deduce (2), recalling that each edge-component contributes a weight
of $-1$.

If $r-a_{k}=1$, then the aforementioned edge-component covers one
of the internal vertices of the final 2-house. There are two different
ways to cover the two external vertices in this 2-house by a spanning
elementary subgraph: either we can cover them with a single edge or we can cover them, in addition to the remaining
internal vertex, with a 3-cycle (see \cref{fig:Q-cases}). In the first case, we accumulate a factor of $-2$, and the remaining vertices of the spanning elementary subgraph can be interpreted as a spanning elementary subgraph of $Q(r-2;a_{1},\dots,a_{k-1})$. In the second case, we accumulate a factor of $-1$, and the remaining vertices of the spanning elementary subgraph can be interpreted as a spanning elementary subgraph of $Q(r-3;a_{1},\dots,a_{k-1})$. So,
\[
q(r;a_{1},\dots,a_{k-1})=(-1)^2q(r-2;a_{1},\dots,a_{k-1})+(-1)(-2)q(r-3;a_{1},\dots,a_{k-1}),\]
yielding (3).

If $r=a_{k}$, then the final vertex of $Q(r;a_{1},\dots,a_{k})$
is an internal vertex of the final 2-house, and does not have degree
1. There are a few different ways to cover the final vertex and the
two external vertices of the final house by a spanning elementary
subgraph: we could cover just these three vertices with a 3-cycle,
or we could cover the entire 2-house (there are three different ways
to do this with two disjoint edges, and three different ways to do
this with a 4-cycle; see \cref{fig:Q-cases}). Similar considerations as above yield
\[
q(r;a_{1},\dots,a_{k-1})  =-2q(r-1;a_{1},\dots,a_{k-1})+(3(-1)^{2}+3(-2))q(r-2;a_{1},\dots,a_{k-1}),\]
yielding (4).
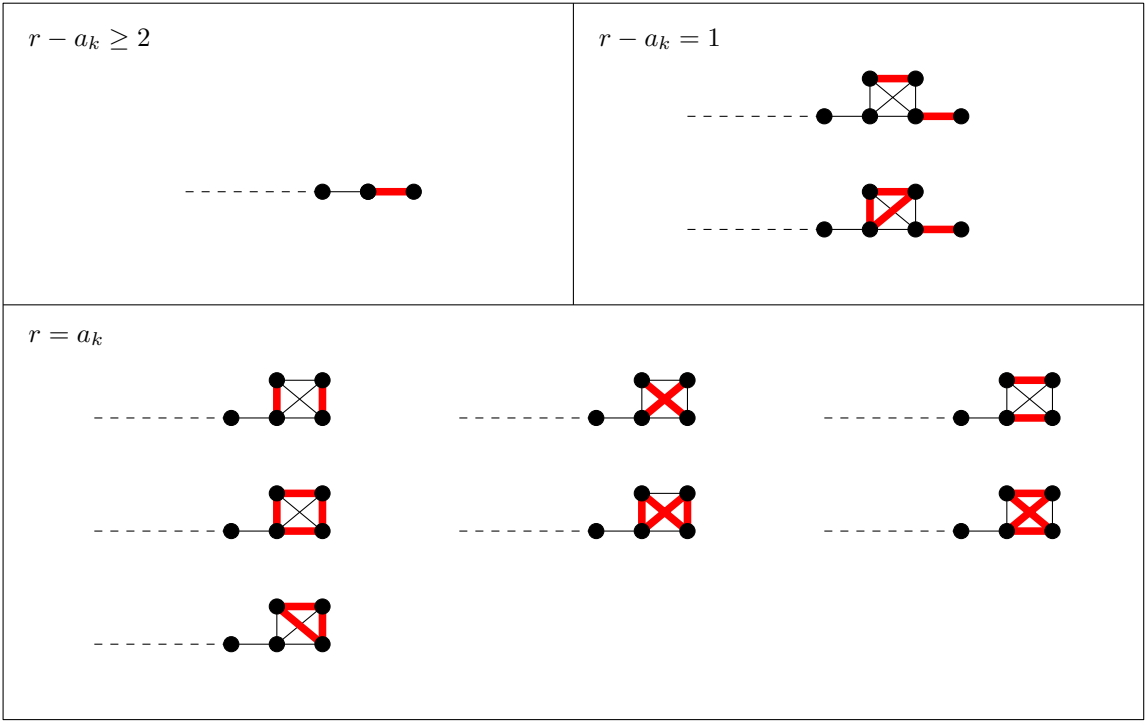
\begin{figure}
\centering
\begin{tikzpicture}[xscale=0.6,yscale=0.5,line join=round]
\coordinate (a1) at (-14, 2) {};
\coordinate (a2) at (-11, 2) {};
\coordinate (a3) at (-10, 2) {};
\coordinate (a3) at (-10, 2) {};
\coordinate (a4) at (-9, 2) {};

\coordinate (b1) at (-3, 4) {};
\coordinate (b2) at (0, 4) {};
\coordinate (b3) at (1, 4) {};
\coordinate (b4) at (3, 4) {};
\coordinate (b5) at (1, 5) {};
\coordinate (b6) at (2, 5) {};
\coordinate (b7) at (2, 4) {};

\coordinate (c1) at (-3, 1) {};
\coordinate (c2) at (0, 1) {};
\coordinate (c3) at (1, 1) {};
\coordinate (c4) at (3, 1) {};
\coordinate (c5) at (1, 2) {};
\coordinate (c6) at (2, 2) {};
\coordinate (c7) at (2, 1) {};

\coordinate (d1) at (-16, -4) {};
\coordinate (d2) at (-13, -4) {};
\coordinate (d3) at (-12, -4) {};
\coordinate (d4) at (-12, -3) {};
\coordinate (d5) at (-11, -3) {};
\coordinate (d6) at (-11, -4) {};

\coordinate (e1) at (-8, -4) {};
\coordinate (e2) at (-5, -4) {};
\coordinate (e3) at (-4, -4) {};
\coordinate (e4) at (-4, -3) {};
\coordinate (e5) at (-3, -3) {};
\coordinate (e6) at (-3, -4) {};

\coordinate (f1) at (0, -4) {};
\coordinate (f2) at (3, -4) {};
\coordinate (f3) at (4, -4) {};
\coordinate (f4) at (4, -3) {};
\coordinate (f5) at (5, -3) {};
\coordinate (f6) at (5, -4) {};

\coordinate (g1) at (-16, -7) {};
\coordinate (g2) at (-13, -7) {};
\coordinate (g3) at (-12, -7) {};
\coordinate (g4) at (-12, -6) {};
\coordinate (g5) at (-11, -6) {};
\coordinate (g6) at (-11, -7) {};

\coordinate (h1) at (-8, -7) {};
\coordinate (h2) at (-5, -7) {};
\coordinate (h3) at (-4, -7) {};
\coordinate (h4) at (-4, -6) {};
\coordinate (h5) at (-3, -6) {};
\coordinate (h6) at (-3, -7) {};

\coordinate (i1) at (0, -7) {};
\coordinate (i2) at (3, -7) {};
\coordinate (i3) at (4, -7) {};
\coordinate (i4) at (4, -6) {};
\coordinate (i5) at (5, -6) {};
\coordinate (i6) at (5, -7) {};

\coordinate (j1) at (-16, -10) {};
\coordinate (j2) at (-13, -10) {};
\coordinate (j3) at (-12, -10) {};
\coordinate (j4) at (-12, -9) {};
\coordinate (j5) at (-11, -9) {};
\coordinate (j6) at (-11, -10) {};

\coordinate(1) at (7, 7) {};
\coordinate(2) at (7, -12) {};
\coordinate(3) at (-18, -12) {};
\coordinate[label={[label distance=3mm]-45:$r-a_k\ge 2$}](4) at (-18, 7) {};

\coordinate[label={[label distance=3mm]-45:$r=a_k$}](5) at (-18, -1) {};
\coordinate(6) at (7, -1) {};

\coordinate(7) at (-5.5, -1) {};
\coordinate[label={[label distance=3mm]-45:$r-a_k=1$}](8) at (-5.5, 7) {};

\draw (1) -- (2) -- (3) -- (4) -- (1);
\draw (5) -- (6);
\draw (7) -- (8);

\draw[dashed] (a1) -- (a2);
\draw (a2) -- (a4);
\draw[line width=1mm,red] (a3) -- (a4);
\draw[dashed] (b1) -- (b2);
\draw (b2) -- (b4);
\draw (b5) -- (b3) -- (b6) -- (b5) -- (b7) -- (b6);
\draw[line width=1mm,red] (b7) -- (b4);
\draw[line width=1mm,red] (b6) -- (b5);

\draw[dashed] (c1) -- (c2);
\draw (c2) -- (c4);
\draw (c5) -- (c3) -- (c6) -- (c5) -- (c7) -- (c6);
\draw[line width=1mm,red] (c7) -- (c4);
\draw[line width=1mm,red] (c6) -- (c3) -- (c5) -- (c6);
\draw[dashed] (d1) -- (d2);
\draw (d2) -- (d6) -- (d5) -- (d4) -- (d3) -- (d5);
\draw (d4) -- (d6);
\draw[line width=1mm,red] (d6) -- (d5);
\draw[line width=1mm,red] (d3) -- (d4);
\draw[dashed] (e1) -- (e2);
\draw (e2) -- (e6) -- (e5) -- (e4) -- (e3) -- (e5);
\draw (e4) -- (e6);
\draw[line width=1mm,red] (e6) -- (e4);
\draw[line width=1mm,red] (e3) -- (e5);
\draw[dashed] (f1) -- (f2);
\draw (f2) -- (f6) -- (f5) -- (f4) -- (f3) -- (f5);
\draw (f4) -- (f6);
\draw[line width=1mm,red] (f6) -- (f3);
\draw[line width=1mm,red] (f4) -- (f5);
\draw[dashed] (g1) -- (g2);
\draw (g2) -- (g6) -- (g5) -- (g4) -- (g3) -- (g5);
\draw (g4) -- (g6);
\draw[line width=1mm,red] (g3) -- (g4) -- (g5) -- (g6) -- (g3);
\draw[dashed] (h1) -- (h2);
\draw (h2) -- (h6) -- (h5) -- (h4) -- (h3) -- (h5);
\draw (h4) -- (h6);
\draw[line width=1mm,red] (h3) -- (h5) -- (h6) -- (h4) -- (h3);
\draw[dashed] (i1) -- (i2);
\draw (i2) -- (i6) -- (i5) -- (i4) -- (i3) -- (i5);
\draw (i4) -- (i6);
\draw[line width=1mm,red] (i3) -- (i6) -- (i4) -- (i5) -- (i3);
\draw[dashed] (j1) -- (j2);
\draw (j2) -- (j6) -- (j5) -- (j4) -- (j3) -- (j5);
\draw (j4) -- (j6);
\draw[line width=1mm,red] (j5) -- (j6) -- (j4) -- (j5);

\node [vertex] at (-11, 2) {};
\node [vertex] at (-10, 2) {};
\node [vertex] at (-10, 2) {};
\node [vertex] at (-9, 2) {};

\node [vertex] at (0, 4) {};
\node [vertex] at (1, 4) {};
\node [vertex] at (3, 4) {};
\node [vertex] at (1, 5) {};
\node [vertex] at (2, 5) {};
\node [vertex] at (2, 4) {};

\node [vertex] at (0, 1) {};
\node [vertex] at (1, 1) {};
\node [vertex] at (3, 1) {};
\node [vertex] at (1, 2) {};
\node [vertex] at (2, 2) {};
\node [vertex] at (2, 1) {};

\node [vertex] at (-13, -4) {};
\node [vertex] at (-12, -4) {};
\node [vertex] at (-12, -3) {};
\node [vertex] at (-11, -3) {};
\node [vertex] at (-11, -4) {};

\node [vertex] at (-5, -4) {};
\node [vertex] at (-4, -4) {};
\node [vertex] at (-4, -3) {};
\node [vertex] at (-3, -3) {};
\node [vertex] at (-3, -4) {};

\node [vertex] at (3, -4) {};
\node [vertex] at (4, -4) {};
\node [vertex] at (4, -3) {};
\node [vertex] at (5, -3) {};
\node [vertex] at (5, -4) {};

\node [vertex] at (-13, -7) {};
\node [vertex] at (-12, -7) {};
\node [vertex] at (-12, -6) {};
\node [vertex] at (-11, -6) {};
\node [vertex] at (-11, -7) {};

\node [vertex] at (-5, -7) {};
\node [vertex] at (-4, -7) {};
\node [vertex] at (-4, -6) {};
\node [vertex] at (-3, -6) {};
\node [vertex] at (-3, -7) {};

\node [vertex] at (3, -7) {};
\node [vertex] at (4, -7) {};
\node [vertex] at (4, -6) {};
\node [vertex] at (5, -6) {};
\node [vertex] at (5, -7) {};

\node [vertex] at (-13, -10) {};
\node [vertex] at (-12, -10) {};
\node [vertex] at (-12, -9) {};
\node [vertex] at (-11, -9) {};
\node [vertex] at (-11, -10) {};
\end{tikzpicture}

\caption{\label{fig:Q-cases}All the possible ways to cover the final vertex (and possibly the external vertices in the final 2-house) in a spanning elementary subgraph of a graph $Q(r;a_1,\dots,a_k)$.}
\end{figure}
\end{proof}

The recurrences described in \cref{lem:Q-determinant} are sufficient to compute any $q(r;a_1,\dots,a_k)$, but the general formulas are rather complicated. We consider a restricted class of choices of $a_1,\dots,a_k$, which will be sufficient for the proof of \cref{lem:determinant}.

\begin{corollary}\label{cor:Q-determinant}Suppose $a_{1},\dots,a_{k}$ are odd integers. Then
\[
q(r;a_{1},\dots,a_{k})=\begin{cases}
2k(-1)^{r/2+1} & \text{if }r\text{ is even},\\
(2k+1)(-1)^{(r+1)/2} & \text{if }r\text{ is odd.}
\end{cases}
\]
\end{corollary}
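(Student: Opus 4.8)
The plan is to prove the formula by a double induction: an outer induction on $k$, and, inside the inductive step, an inner induction on $r$ over the range $r \ge a_k$. The only inputs are the four recurrences of \cref{lem:Q-determinant} together with the hypothesis that each $a_i$ is odd, so that $a_k-1$ is even and $a_k-2$ is odd.

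For the base case $k=0$, the graph $Q(r)$ is simply a path on $r+1$ vertices (with $Q(-1)$ the empty graph). A path has no cycles, so its only possible spanning elementary subgraph is a perfect matching; such a matching exists, and is then unique, exactly when $r+1$ is even, i.e.\ $r$ is odd, in which case it has $(r+1)/2$ edges and $\beta$-value $(-1)^{(r+1)/2}$. Hence $q(r)=0$ for even $r$ and $q(r)=(-1)^{(r+1)/2}$ for odd $r$, which is precisely the claimed formula at $k=0$. (This also matches \cref{lem:Q-determinant}(1) and (2), so there is some redundancy one could instead exploit.)

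For the inductive step, I would fix $k\ge 1$, assume the formula for $k-1$ for all admissible odd parameter lists, fix admissible odd $a_1\le\dots\le a_k$, and induct on $r\ge a_k$. The two base cases are $r=a_k$ and $r=a_k+1$. When $r=a_k$, \cref{lem:Q-determinant}(4) rewrites $q(a_k;a_1,\dots,a_k)$ as $-2q(a_k-1;a_1,\dots,a_{k-1})-3q(a_k-2;a_1,\dots,a_{k-1})$; since $a_{k-1}\le a_k-2$, both reduced parameter lists are still admissible, and since $a_k-1$ is even and $a_k-2$ is odd, the $(k-1)$-case formula applies to each term. Substituting and collecting coefficients, the key cancellation is $4(k-1)-3(2k-1)=-(2k+1)$, which yields exactly $(2k+1)(-1)^{(a_k+1)/2}$, the desired value at odd $r=a_k$. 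The case $r=a_k+1$ is analogous via \cref{lem:Q-determinant}(3): here $q(a_k+1;a_1,\dots,a_k)=q(a_k-1;a_1,\dots,a_{k-1})+2q(a_k-2;a_1,\dots,a_{k-1})$, and the cancellation $-2(k-1)+2(2k-1)=2k$ produces $2k(-1)^{(a_k+1)/2+1}$, which is the desired value at even $r=a_k+1$. Finally, for $r\ge a_k+2$ one has $r-a_k\ge 2$, so \cref{lem:Q-determinant}(2) gives $q(r;a_1,\dots,a_k)=-q(r-2;a_1,\dots,a_k)$; since negating the right-hand formula and replacing $r$ by $r-2$ leaves its value at $r$ unchanged (in both parities), the inner induction closes and the outer induction is complete.

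I do not expect a genuine obstacle: this is a mechanical, if bookkeeping-heavy, unwinding of \cref{lem:Q-determinant}. The one point worth flagging is that the hypothesis that the $a_i$ are odd is used essentially — it is exactly what makes the ``even-argument'' and ``odd-argument'' branches of the $(k-1)$-case formula line up when the last $2$-house is stripped off — so the argument would not collapse so cleanly without it, and for general $a_i$ one would have to track the full (more complicated) solution of the recurrences.
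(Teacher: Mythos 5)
Your proof is correct and follows essentially the same route as the paper: induction on $k$, using Lemma~\ref{lem:Q-determinant}(3) and (4) at $r=a_k+1$ and $r=a_k$ (with the same cancellations $-(2k+1)$ and $2k$) and then propagating via Lemma~\ref{lem:Q-determinant}(2). The only cosmetic difference is that you anchor the induction at $k=0$ with a direct perfect-matching count on the path, whereas the paper starts from Lemma~\ref{lem:Q-determinant}(1) and iterates part (2) to reach the $k=1$ case.
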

\begin{proof}
% First, we show that it suffices to consider the case $a_1=3$. Indeed, define $a_i'=a_i-(a_1-3)$ for all $i$ (so $a_1=3$) and $r'=a_i-(a_1-3)$. Reversing the order of the path underlying $Q(r;a_{1},\dots,a_{k})$ (i.e., listing the positions of the houses in the opposite order), we have $q(r;a_{1},\dots,a_{k})=q(r;r-a_{1}+1,\dots,r-a_{k}+1)$, and applying \cref{lem:Q-determinant}(2) $(a_1-3)/2$ times proves that \[q(r;r-a_{1}+1,\dots,r-a_{k}+1)=(-1)^{(a_1-3)/2}q(r';r'-a_{1}'+1,\dots,r'-a_{k}'+1).\]
% Reversing the order of the path again yields $q(r;a_{1},\dots,a_{k})=(-1)^{(a_1-3)/2}q(r';a_{1}',\dots,r'-a_{k}')$. It follows that the $a_1=3$ case implies the general case.

%Now, our objective is to prove the statement of \cref{cor:Q-determinant} for $a_1=3$.
We proceed by induction on $k$.

First, iterating \cref{lem:Q-determinant}(2), starting with \cref{lem:Q-determinant}(1), yields
\[q(a_1-2)=(-1)^{(a_1-1)/2},\qquad q(a_1-1)=0.\]So, \cref{lem:Q-determinant}(3) and (4) give
\[q(a_1+1;a_1)=2(-1)^{(a_1-1)/2}=2(-1)^{(a_1+1)/2+1},\qquad q(a_1;a_1)=-3(-1)^{(a_1-1)/2}=3(-1)^{(a_1+1)/2},\]
respectively. Iterating \cref{lem:Q-determinant}(2) again yields the desired result for $k=1$.

% we have $q(3;3)=-2q(2)-3q(1)=2q(0)-3q(1)=3$ and
% $q(4;3)=q(2)+2q(1)=-q(0)+2q(1)=-2$. Iteratively applying \cref{lem:Q-determinant}(2) proves the desired statement for $k=1$.

Now, consider $k\ge2$ and assume that the desired statement holds
for smaller $k$. Then, recalling that $a_{k}$ is odd, our inductive
assumption together with \cref{lem:Q-determinant}(3,4) yields
\begin{align*}
q(a_{k};a_{1},\dots,a_{k}) & =-2q(a_{k}-1;a_{1},\dots,a_{k-1})-3q(a_{k}-2;a_{1},\dots,a_{k-1})\\
 & =-2(2k-2)(-1)^{(a_{k}-1)/2+1}-3(2k-1)(-1)^{(a_{k}-1)/2}\\
 & =(2k+1)(-1)^{(a_{k}+1)/2},\\
q(a_{k}+1;a_{1},\dots,a_{k}) & =q(a_{k}-1;a_{1},\dots,a_{k-1})+2q(a_{k}-2;a_{1},\dots,a_{k-1})\\
 & =(2k-2)(-1)^{(a_{k}-1)/2+1}+2(2k-1)(-1)^{(a_{k}-1)/2}\\
 & =2k(-1)^{(a_{k}+1)/2+1}.
\end{align*}
Iterating \cref{lem:Q-determinant}(2) proves the desired statement.
\end{proof}

Now, we are ready to prove \cref{lem:determinant}.

\begin{proof}[Proof of \cref{lem:determinant}]
Let $b_1< \dots<b_k$ be the distances of the 2-hubs from the 1-hub in $G$ (so in particular $b_1=4$, and all $b_i$ are even). Let $D$ be the sum of $\beta(X)$ over all spanning elementary subgraphs $X$ of $\on{line}(G)$.
    
    Let $u^*$ be the tip of the 1-house in $\on{line}(G)$. There are four ways for an elementary subgraph to cover $u^*$ (pictured in \cref{fig:tip-cases}):
    \begin{enumerate}
        \item $u^*$ could be covered by a long cycle that runs all the way around the nice graph.
        \item $u^*$ could be covered by a 3-cycle covering the entire 1-house.
        \item $u^*$ could be covered by a single edge, whose other vertex is at distance 3 from the 2-house.
        \item $u^*$ could be covered by a single edge, whose other vertex is at distance 4 from the 2-house.
    \end{enumerate}
    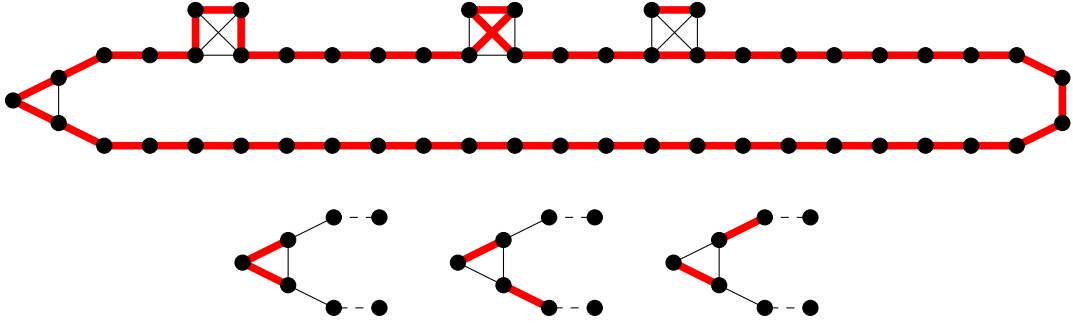
\begin{figure}
\centering

\begin{tikzpicture}[scale=0.6,line join=round]
\node [vertex, fill=white, draw=white] (p0) at (-14, 0) {};
\node [vertex, fill=white, draw=white] (p1) at (4,0) {};
\coordinate (0) at (-13.5, 0) {};
\coordinate (1) at (-12.5, 0.5) {};
\coordinate (2) at (-11.5, 1) {};
\coordinate (3) at (-10.5, 1) {};
\coordinate (4) at (-9.5, 1) {};
\coordinate (5) at (-8.5, 1) {};
\coordinate (6) at (-7.5, 1) {};
\coordinate (7) at (-6.5, 1) {};
\coordinate (8) at (-5.5, 1) {};
\coordinate (9) at (-4.5, 1) {};
\coordinate (10) at (-3.5, 1) {};
\coordinate (11) at (-2.5, 1) {};
\coordinate (12) at (8.5,1) {} {};
\coordinate (13) at (9.5,0.5) {} {};
\coordinate (14) at (9.5,-0.5) {} {};
\coordinate (15) at (8.5,-1) {} {};
\coordinate (16) at (-2.5, -1) {};
\coordinate (17) at (-3.5, -1) {};
\coordinate (18) at (-4.5, -1) {};
\coordinate (19) at (-5.5, -1) {};
\coordinate (20) at (-6.5, -1) {};
\coordinate (21) at (-7.5, -1) {};
\coordinate (22) at (-8.5, -1) {};
\coordinate (23) at (-9.5, -1) {};
\coordinate (24) at (-10.5, -1) {};
\coordinate (25) at (-11.5, -1) {};
\coordinate (26) at (-12.5, -0.5) {};
\coordinate (27) at (-9.5, 2) {};
\coordinate (28) at (-8.5, 2) {};
\coordinate (29) at (-3.5, 2) {};
\coordinate (30) at (-2.5, 2) {};
\coordinate (x1) at (0.5, 1) {};
\coordinate (x2) at (1.5, 1) {};
\coordinate (y1) at (0.5, 2) {};
\coordinate (y2) at (1.5, 2) {};

\draw (1) -- (2) -- (12) -- (13) -- (14) -- (15) -- (25) -- (26) -- (1);
\draw (5) -- (28) -- (27) -- (4);
\draw (4) -- (28);
\draw (5) -- (27);
\draw (11) -- (29) -- (30) -- (10);
\draw (11) -- (30);
\draw (10) -- (29);
\draw (1) -- (0) -- (26);
\draw (x1) -- (y1) -- (y2) -- (x2) -- (y1);
\draw (x1) -- (y2);

\draw[line width=1mm,red] (0) -- (2) -- (4) -- (27) -- (28) -- (5) -- (10) -- (30) -- (29) -- (11) -- (12) -- (13) -- (14) -- (15) -- (25) -- (0);
\draw[line width=1mm,red] (y1) -- (y2);
\node [vertex] at (-13.5, 0) {};
\node [vertex] at (-12.5, 0.5) {};
\node [vertex] at (-11.5, 1) {};
\node [vertex] at (-10.5, 1) {};
\node [vertex] at (-9.5, 1) {};
\node [vertex] at (-8.5, 1) {};
\node [vertex] at (-7.5, 1) {};
\node [vertex] at (-6.5, 1) {};
\node [vertex] at (-5.5, 1) {};
\node [vertex] at (-4.5, 1) {};
\node [vertex] at (-3.5, 1) {};
\node [vertex] at (-2.5, 1) {};
\node [vertex] at (8.5,1) {};
\node [vertex] at (9.5,0.5) {};
\node [vertex] at (9.5,-0.5) {};
\node [vertex] at (8.5,-1) {};
\node [vertex] at (-2.5, -1) {};
\node [vertex] at (-3.5, -1) {};
\node [vertex] at (-4.5, -1) {};
\node [vertex] at (-5.5, -1) {};
\node [vertex] at (-6.5, -1) {};
\node [vertex] at (-7.5, -1) {};
\node [vertex] at (-8.5, -1) {};
\node [vertex] at (-9.5, -1) {};
\node [vertex] at (-10.5, -1) {};
\node [vertex] at (-11.5, -1) {};
\node [vertex] at (-12.5, -0.5) {};
\node [vertex] at (-9.5, 2) {};
\node [vertex] at (-8.5, 2) {};
\node [vertex] at (-3.5, 2) {};
\node [vertex] at (-2.5, 2) {};
\node [vertex] at (-1.5, 1) {};
\node [vertex] at (-0.5, 1) {};
\node [vertex] at (0.5, 1) {};
\node [vertex] at (1.5, 1) {};
\node [vertex] at (0.5, 2) {};
\node [vertex] at (1.5, 2) {};
\node [vertex] at (-1.5, -1) {};
\node [vertex] at (-0.5,-1) {};
\node [vertex] at (0.5, -1) {};
\node [vertex] at (1.5, -1) {};

\node [vertex] at (2.5,1) {};
\node [vertex] at (3.5,1) {};
\node [vertex] at (4.5,1) {};
\node [vertex] at (5.5,1) {};
\node [vertex] at (6.5,1) {};
\node [vertex] at (7.5,1) {};
\node [vertex] at (2.5,-1) {};
\node [vertex] at (3.5,-1) {};
\node [vertex] at (4.5,-1) {};
\node [vertex] at (5.5,-1) {};
\node [vertex] at (6.5,-1) {};
\node [vertex] at (7.5,-1) {};
\end{tikzpicture}

\vspace{20pt}
\begin{tikzpicture}[scale=0.6,line join=round]
\coordinate (0) at (-13.5, 0) {};
\coordinate (1) at (-12.5, 0.5) {};
\coordinate (2) at (-11.5, 1) {};
\coordinate (3) at (-10.5, 1) {};
\coordinate (23) at (-9.5, -1) {};
\coordinate (24) at (-10.5, -1) {};
\coordinate (25) at (-11.5, -1) {};
\coordinate (26) at (-12.5, -0.5) {};

\draw (26) -- (1);
\draw (2) -- (1) -- (0) -- (26) -- (25);
\draw[dashed] (24) -- (25);
\draw[dashed] (2) -- (3);

\draw[line width=1mm,red] (26) -- (0) -- (1);

\node [vertex] at (-13.5, 0) {};
\node [vertex] at (-12.5, 0.5) {};
\node [vertex] at (-11.5, 1) {};
\node [vertex] at (-10.5, 1) {};
\node [vertex] at (-10.5, -1) {};
\node [vertex] at (-11.5, -1) {};
\node [vertex] at (-12.5, -0.5) {};
\end{tikzpicture}
\qquad
\begin{tikzpicture}[scale=0.6,line join=round]
\coordinate (0) at (-13.5, 0) {};
\coordinate (1) at (-12.5, 0.5) {};
\coordinate (2) at (-11.5, 1) {};
\coordinate (3) at (-10.5, 1) {};
\coordinate (23) at (-9.5, -1) {};
\coordinate (24) at (-10.5, -1) {};
\coordinate (25) at (-11.5, -1) {};
\coordinate (26) at (-12.5, -0.5) {};

\draw (26) -- (1);
\draw (2) -- (1) -- (0) -- (26) -- (25);
\draw[dashed] (24) -- (25);
\draw[dashed] (2) -- (3);

\draw[line width=1mm,red] (1) -- (0);
\draw[line width=1mm,red] (26) -- (25);

\node [vertex] at (-13.5, 0) {};
\node [vertex] at (-12.5, 0.5) {};
\node [vertex] at (-11.5, 1) {};
\node [vertex] at (-10.5, 1) {};
\node [vertex] at (-10.5, -1) {};
\node [vertex] at (-11.5, -1) {};
\node [vertex] at (-12.5, -0.5) {};
\end{tikzpicture}
\qquad
\begin{tikzpicture}[scale=0.6,line join=round]
\coordinate (0) at (-13.5, 0) {};
\coordinate (1) at (-12.5, 0.5) {};
\coordinate (2) at (-11.5, 1) {};
\coordinate (3) at (-10.5, 1) {};
\coordinate (23) at (-9.5, -1) {};
\coordinate (24) at (-10.5, -1) {};
\coordinate (25) at (-11.5, -1) {};
\coordinate (26) at (-12.5, -0.5) {};

\draw (26) -- (1);
\draw (2) -- (1) -- (0) -- (26) -- (25);
\draw[dashed] (24) -- (25);
\draw[dashed] (2) -- (3);

\draw[line width=1mm,red] (26) -- (0);
\draw[line width=1mm,red] (1) -- (2);

\node [vertex] at (-13.5, 0) {};
\node [vertex] at (-12.5, 0.5) {};
\node [vertex] at (-11.5, 1) {};
\node [vertex] at (-10.5, 1) {};
\node [vertex] at (-10.5, -1) {};
\node [vertex] at (-11.5, -1) {};
\node [vertex] at (-12.5, -0.5) {};
\end{tikzpicture}

\caption{\label{fig:tip-cases}Four possible ways to cover the tip of the 1-house}
\end{figure}

Let $D_1,D_2,D_3,D_4$ be the contributions to $D$ from spanning elementary subgraphs that cover $u^*$ in each of the above four ways (in that order). First, $D_2,D_3,D_4$ can be handled with \cref{cor:Q-determinant}, as follows. Recall that $\ell\equiv 2\pmod 4$.

For $D_2$: apart from the 3-cycle covering the 1-house, the rest of a spanning elementary subgraph corresponds to a spanning elementary subgraph of $Q(\ell-3;b_1-1,\dots,b_k-1)$, so
\begin{equation}
D_2=-2q(\ell-3;b_1-1,\dots,b_k-1)=-2(2k+1)=-4k-2.\label{eq:d2}
\end{equation}
For $D_3$: apart from the edge covering the tip of the 1-house, the rest of a spanning elementary subgraph corresponds to a spanning elementary subgraph of $Q(\ell-2;b_1-1,\dots,b_k-1)$, so 
\begin{equation}
D_3=-q(\ell-2;b_1-1,\dots,b_k-1)=-(-2k)=2k.\label{eq:d3}    
\end{equation}
For $D_4$: apart from the edge covering the tip of the 1-house, the rest of a spanning elementary subgraph corresponds to a spanning elementary subgraph of \[Q(\ell-2;2,b_1,\dots,b_k)\cong Q(\ell-2;\ell-b_k-1,\ell-b_{k-1}-1\dots,\ell-b_1-1)\] (we can describe the graph in ``two different directions''). Note that $\ell-b_k$ is even (as the difference of two even numbers), so 
\begin{equation}
    D_4=-q(\ell-2;\ell-b_k-1,\ell-b_{k-1}-1\dots,\ell-b_1-1)=-(-2k)=2k.\label{eq:d4}
\end{equation}

It remains to consider $D_1$. Suppose we have an elementary spanning subgraph which contains a long cycle $C$ covering $u^*$ and going around the $\ell$-cycle of $\on{line}(G)$. There are three different ways that $C$ can interact with each 2-house of $\on{line}(G)$ (all of which are pictured at the top of \cref{fig:tip-cases}). Specifically, there are two ways for $C$ to pass through all 4 vertices of the 2-house, or alternatively $C$ can simply pass through the internal vertices of the 2-house, leaving the remaining two external vertices to be covered by an edge-component.

So, there are $3^k$ spanning elementary subgraphs that contribute to $D_1$. To compute the weight of each such subgraph: first, start with a base weight of $-2$. For each 2-house, we have three choices; the first two (incorporating the 2-house in the cycle) do not affect the weight, but the third (leaving the external vertices for an edge-component) accumulates a factor of $-1$. So,
\begin{equation}
D_1=(-2) (1 + 1 -1)^k=-2.\label{eq:d1}
\end{equation}
Combining \cref{eq:d1,eq:d2,eq:d3,eq:d4}, we see that $D=-4$, so by \cref{thm:characteristic-coefficients-description}, the determinant of $\mr{A}(\on{line}(G))$ is nonzero (it has absolute value 4).
\end{proof}
\section{Augmenting the prime case}\label{sec:adjacency-2}
In this section we show how to use line graphs of nice graphs to define a family of exponentially many graphs that are determined by their adjacency spectrum. This definition includes a number of inequalities and number-theoretic properties which will be used in a somewhat delicate case analysis in the proof of \cref{thm:main} (to rule out various possibilities for graphs which have the same spectrum as one of our graphs of interest, but have different structure).
\begin{definition}\label{def:Qn}
The \emph{star graph} $K_{1,n}$ consists of $n$ leaves attached to a single vertex. Note that $\on{line}(K_{1,n})$ is the complete graph $K_n$ on $n$ vertices.

Let $\mathcal G_n$ be the family of graphs $G$ satisfying the following properties.
    \begin{enumerate}[{\bfseries{G\arabic{enumi}}}]
        \item $G$ has two components. One of these components is an $(\ell,k)$-nice graph $G_1$ (for some parameters $\ell,k$ satisfying $\ell\le \max(12k,15)$), and the other of these components is a star graph $K_{1,n_2}$ (with some number of edges $n_2$).
        \item Writing $n_1=\ell+2k+1$ for the number of edges and vertices of $G_1$, we have $n_1+n_2=n$ (i.e., $G$ has $n$ edges).
        \item\label{item:m1-prime} $n_1$ is a sufficiently large prime number (larger than $n_0$ from \cref{eq:n0}).
        \item\label{item:ell-2prime} $\ell=2p$ for a sufficiently large prime number $p$ (larger than $n_0$ from \cref{eq:n0}).
        \item\label{item:m2-not4} $n_2\not\equiv 3\pmod 4$.
        \item\label{item:m1<m2} $n_1< n_2$.
        \item\label{item:volume} $2n_1+p-2>n$.
        \item\label{item:ell-n1-n2} $2n_1-\ell+2<n_2-1$.
    \end{enumerate}
(Note that \cref{item:m1-prime,item:ell-2prime} imply that $G_1$ satisfies the properties in \cref{lem:A-prime}). Let $\mathcal Q_n=\on{line}(\mathcal G_n)$ be the family of line graphs of graphs in $\mathcal G_n$.
\end{definition}
Then, the following two lemmas imply \cref{thm:main}. %(Note that complete graphs are DS, so there is always at least one DS graph on any number of vertices, so it suffices to consider large $n$).
\begin{lemma}\label{lem:special-count}
    There is a constant $c>0$ such that $|\mathcal Q_n|\ge e^{cn}$ for every sufficiently large $n$.
\end{lemma}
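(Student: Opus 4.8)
The plan is to show that among the graphs satisfying conditions \textbf{G1}--\textbf{G8}, there are already exponentially many just from the freedom in choosing the nice-graph component $G_1$, once we have fixed an admissible pair of sizes $(n_1,n_2)$. Recall from \cref{def:nice} that an $(\ell,k)$-nice graph is specified by the sequence of gaps ($4$ or $6$) between consecutive hubs, starting with a forced gap of $4$ from the $1$-hub to the first $2$-hub; so there are exactly $2^{k-1}$ distinct $(\ell,k)$-nice graphs for each valid $(\ell,k)$ with $\ell = 4 + \sum(\text{gaps})$ (different gap sequences give non-isomorphic graphs since the $1$-hub is distinguished and the multiset of gaps, read in order from the $1$-hub, is an isomorphism invariant). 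Since the line-graph operation is injective on connected graphs with more than four vertices (indeed by Whitney's theorem, and our nice graphs are large), and the two components of a graph in $\mathcal G_n$ have very different structure (one is a complete graph $K_{n_2}$, the other a connected graph that is not complete), distinct $G_1$'s yield distinct line graphs $\on{line}(G)\in\mathcal Q_n$. So it suffices to exhibit one admissible $(n_1,n_2,\ell,k,p)$ with $k \ge cn$ for a constant $c>0$.

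First I would fix the shape of the construction: take $\ell = 2p$ with $p$ prime, and choose $k$ as large as the constraint $\ell \ge 12k$ (from \cref{def:nice}) permits, i.e. $k \approx \ell/12 = p/6$. Then $n_1 = \ell + 2k + 1 \approx (2 + 1/6)p = 13p/6$, and we need $n_1$ prime as well (\textbf{G3}); and $n_2 = n - n_1 \approx n - 13p/6$ with $n_2 > n_1$ (\textbf{G6}) and $n_2 \not\equiv 3 \pmod 4$ (\textbf{G5}). The remaining conditions \textbf{G7} ($2n_1 + p - 2 > n$) and \textbf{G8} ($2n_1 - \ell + 2 < n_2 - 1$) are linear inequalities in the chosen quantities; with $n_1 \approx 13p/6$, $\ell = 2p$, $n_2 = n - n_1$, one checks $2n_1 - \ell + 2 \approx 13p/3 - 2p = 7p/3$ while $n_2 - 1 \approx n - 13p/6$, so \textbf{G8} holds provided $p$ is a small enough fraction of $n$ (say $p \le n/5$), and similarly \textbf{G7} holds in the same regime. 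So all constraints are compatible as long as $p$ is a prime of size $\Theta(n)$ lying in an appropriate range, and with $n_1$ also prime.

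The main obstacle is the simultaneous primality of $p$ and of $n_1 = 2p + 2k + 1$ (with $k$ essentially a fixed linear function of $p$, up to rounding to keep divisibility conditions clean), together with the congruence condition on $n_2$. Here I would use \cref{cor:dirichlet-PNT}: after pinning down $k$ as a fixed affine function of $p$ chosen so that $n_1 = 2p + 2k + 1$ becomes an affine function $\alpha p + \beta$ whose coefficients are coprime to a suitable modulus, one notes that $n_1$ is prime for a positive density of primes $p$ in a suitable arithmetic progression — more carefully, one picks the target residue of $p$ modulo a small fixed modulus $d$ (accounting for the required congruence of $n_2 = n - n_1 \pmod 4$, which is itself an affine condition on $p$) so that $n_1$ is automatically coprime to $d$, and then invokes \cref{cor:dirichlet-PNT} to find such a prime $p$ of size $\Theta(n)$; primality of $n_1$ itself can be arranged along the same lines, or more simply one allows $k$ to vary over an interval of length $\Theta(p)$ (each choice of $k$ still gives $2^{k-1} \ge 2^{\Theta(n)}$ nice graphs, and $n_1$ ranges over an interval of length $\Theta(p) = \Theta(n)$, which by the prime number theorem / Bertrand-type bounds contains a prime; simultaneously adjusting $n_2$ to fix its residue mod $4$). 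Either way, having found one admissible tuple with $k = \Theta(n)$, we conclude $|\mathcal Q_n| \ge 2^{k-1} \ge e^{cn}$ for a suitable $c > 0$ and all large $n$, as claimed. I expect the bookkeeping to pin down all eight conditions simultaneously to be the fiddly part, but each is a soft linear or congruence constraint and there is ample room (a whole $\Theta(n)$-window for $p$), so no genuine difficulty arises.
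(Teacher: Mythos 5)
Your proposal follows essentially the same route as the paper: fix a single admissible parameter tuple with $k=\Theta(n)$, count the $2^{k-1}$ gap sequences of $(\ell,k)$-nice graphs, and note that distinct nice graphs yield distinct line graphs (the paper argues this directly via the $\ell$-cycle and the houses; Whitney's theorem is an equally valid shortcut). The one point you must be careful about is your \emph{primary} suggestion for securing the primality of $n_1$. If you pin $k$ down as a fixed affine function of $p$, then $n_1=2p+2k+1$ becomes a fixed affine function $\alpha p+\beta$ of $p$, and asking for $p$ and $\alpha p+\beta$ to be simultaneously prime is a twin-prime/Dickson-type problem that \cref{cor:dirichlet-PNT} does not address: Dirichlet produces primes in an arithmetic progression, not primes at prescribed affine images of primes. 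So that branch of your argument is a dead end, and you must take your fallback, which is exactly what the paper does: decouple the two primes. Apply \cref{cor:dirichlet-PNT} once to get a prime $p$ in a window of width $\Theta(n)$, and a second time \emph{with modulus $4$} to get a prime $n_1$ in a disjoint window lying in an odd residue class chosen so that $n_2=n-n_1\not\equiv 3\pmod 4$; then $k=(n_1-\ell-1)/2$ is automatically an integer since $n_1$ is odd and $\ell=2p$ is even. Note that your phrasing ``by the prime number theorem / Bertrand-type bounds'' is not quite enough for this second step, because \textbf{G5} excludes one residue class mod $4$ for $n_1$, so the arithmetic-progression version is genuinely needed (one cannot ``adjust $n_2$'' independently of $n_1$).

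Two harmless slips that do not affect the conclusion: with $k\approx\ell/12=p/6$ one gets $n_1=\ell+2k+1\approx 7p/3$ rather than $13p/6$, which shifts your feasible window for $p$ to roughly $3n/17<p<n/5$ (still nonempty and of width $\Theta(n)$); and there is no need to push $k$ to its maximum --- the paper simply takes $p\approx 0.2n$ and $n_1\approx 0.45n$, giving $k\approx 0.025n$ and the bound $e^{0.01n}$.
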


\begin{lemma}\label{lem:special-DS}
    Every graph in $\mathcal Q_n$ is determined by its (adjacency) spectrum.
\end{lemma}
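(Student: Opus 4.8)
The plan is to mirror the structure of the proof of \cref{lem:A-prime}, but now dealing with the extra complete-graph component and the delicate number-theoretic bookkeeping that the conditions \textbf{G1}--\textbf{G8} were designed to make work. Fix a graph $G = G_1 \sqcup K_{1,n_2} \in \mathcal G_n$, so $\on{line}(G) = \on{line}(G_1) \sqcup K_{n_2}$, and let $Q$ be a graph with the same adjacency spectrum as $\on{line}(G)$. The goal is to show $Q \cong \on{line}(G)$. First I would record, exactly as in \cref{eq:f(Q)}, that $f_{\mr A}(Q) = f_{\mr A}(\on{line}(G)) = f_{|\mr L|}(G_1) \cdot f_{\mr A}(K_{n_2})$; since $G_1$ is a bipartite unicyclic graph with $n_1$ vertices and cycle length $\ell$, \cref{lem:unicyclic-f} gives $f_{|\mr L|}(G_1) = n_1 \ell = 2 p n_1$, and $f_{\mr A}(K_{n_2})$ is easily computed (the eigenvalues of $\mr A(K_{n_2})$ are $n_2-1$ once and $-1$ with multiplicity $n_2-1$, none equal to $-2$ once $n_2 \ge 2$, so $f_{\mr A}(K_{n_2}) = (n_2+1) \cdot 1^{n_2-1} = n_2+1$). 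Thus $f_{\mr A}(Q) = 2 p n_1 (n_2+1)$, a product of (at most) three prime-power-ish factors whose prime constituents $n_1$ and $p$ are both large and distinct. I would also note that the largest eigenvalue of $\on{line}(G)$ is $n_2-1$ (the largest eigenvalue of $K_{n_2}$), which by \cref{lem:A-degree-inequality} and condition \textbf{G6} together with the size bounds dwarfs the largest eigenvalue contributed by $\on{line}(G_1)$; this lets me isolate the $K_{n_2}$ component spectrally.

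The first main step is to identify the complete-graph component of $Q$. Since $Q$ has the same spectrum as $\on{line}(G)$, it has a simple eigenvalue equal to $n_2-1$ and all other eigenvalues are at most (roughly) $\sqrt{2 n_1} \ll n_2 - 1$ by \cref{lem:A-degree-inequality}(2) applied to $\on{line}(G_1)$ together with \textbf{G6} and the inequality conditions; I would then use an eigenvalue-interlacing argument (the single large eigenvalue forces a large clique-like component, and the multiplicity-$(n_2-1)$ eigenvalue $-1$, or rather the combinatorial constraints on the total vertex and edge count, pins it down). More carefully, the cleanest route is: all eigenvalues of $Q$ are $\ge -2$, so by \cref{thm:CGSS} every connected component of $Q$ with more than $36$ vertices is a generalised line graph; condition \textbf{G6} guarantees $n_2 > 36$ for large $n$, so the component $Q_0$ carrying the eigenvalue $n_2-1$ is a generalised line graph with largest eigenvalue $n_2-1$, and one checks (using the degree bound $\lambda_{\max} \le \Delta$ and the structure of generalised line graphs, plus \cref{lem:generalised-line-graph-nonzero} and \cref{lem:determinant}-type reasoning to eliminate the cocktail-party pieces) that $Q_0 = K_{n_2}$ exactly. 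I expect this identification step — showing the big component must be precisely $K_{n_2}$ and not some other graph with a comparably large eigenvalue — to be the main obstacle, and it is presumably where conditions \textbf{G7} and \textbf{G8} (the ``volume'' and distance inequalities) get used, to rule out the remaining borderline line-graph and generalised-line-graph configurations.

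Once $Q = K_{n_2} \sqcup Q'$ is established, the spectrum of $Q'$ equals the spectrum of $\on{line}(G_1)$, and $Q'$ has $n_1$ vertices. Now $f_{\mr A}(Q') = f_{\mr A}(Q)/f_{\mr A}(K_{n_2}) = 2 p n_1$, which is exactly $f_{|\mr L|}(G_1)$, and crucially $2 p n_1$ is \emph{not} divisible by $4$ (as $n_1$ is an odd prime, $p$ is an odd prime by \textbf{G3}, \textbf{G4}, and $\ell = 2p \equiv 2 \pmod 4$). At this point I would essentially re-run the argument of \cref{lem:A-prime}: decompose $Q'$ into connected components, use \cref{fact:integer-coefficient} so each $f_{\mr A}(Q'_i)$ is an integer, use that $n_1$ is prime so one component $Q'_1$ absorbs the factor $n_1$, invoke \cref{thm:CGSS}, \cref{lem:generalised-line-graph-nonzero} and \cref{lem:determinant} (applicable by the parenthetical remark in \cref{def:Qn} that $G_1$ satisfies the hypotheses of \cref{lem:A-prime}) to show $Q'_1$ is a genuine line graph $\on{line}(H_1)$, show $H_1$ is bipartite (its $f_{|\mr L|}$ is not divisible by $4$, by \cref{lem:new-SL-bipartite}), and finally use the $-2$-eigenvalue multiplicity count (\cref{lem:-2-multiplicity}) together with the divisibility by $n_1$ and the vertex-count bound to force $v_1 = n_1$ and hence $Q' = \on{line}(H_1)$ with $H_1$ an $n_1$-vertex graph having the same signless Laplacian spectrum as $G_1$. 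Then \cref{lem:nice-SLDS} gives $H_1 \cong G_1$, so $Q' \cong \on{line}(G_1)$ and $Q \cong \on{line}(G)$, completing the proof. The one genuinely new wrinkle beyond \cref{lem:A-prime} is making sure the $K_{n_2}$ component cannot ``steal'' the factor $n_1$ or interfere with the component count — this is controlled because $n_2 + 1$, $p$, and $n_1$ have no common factors forced by the congruence and primality conditions, and because \textbf{G5} ($n_2 \not\equiv 3 \pmod 4$) keeps the divisibility-by-$4$ argument clean.
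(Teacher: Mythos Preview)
Your overall strategy matches the paper exactly: show that $Q$ must have a connected component isomorphic to $K_{n_2}$, then invoke \cref{lem:A-prime} on the remainder. Your computation $f_{\mr A}(Q)=2pn_1(n_2+1)$ is correct, and you are right that the hard work lies entirely in the identification step for the $K_{n_2}$ component.

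However, your treatment of this identification step is where the proposal has a genuine gap. You write ``one checks \dots\ that $Q_0=K_{n_2}$ exactly'' and correctly guess that \textbf{G7} and \textbf{G8} are used here, but you do not supply a mechanism, and the mechanism you gesture at (coprimality of $n_2+1$ with $n_1$ and $p$) is not available: nothing in \textbf{G1}--\textbf{G8} forces $n_2+1$ to be coprime to either prime. The paper's actual argument goes as follows. From $\lambda_{\max}\le\Delta$ one gets $\Delta(Q_2)\ge n_2-1$, hence $|Q_2|\ge n_2$; then $Q_2=\on{line}(H_2)$ by \cref{thm:CGSS} and \cref{lem:generalised-line-graph-nonzero}. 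Now a separately stated structural lemma (the paper's \cref{lem:component-cases}) classifies the possibilities for $H_2$ according to the multiplicity of $-2$ in $Q_2$ and whether $f_{\mr A}(Q_2)$ is divisible by $8$: $H_2$ is odd-unicyclic, a tree, or even-unicyclic. One then runs a delicate case analysis, tracking in which component the prime factors $n_1$ and $p$ of $f_{\mr A}(Q)$ must land, and combining this with global vertex-count constraints; \textbf{G5} is what guarantees ``not divisible by $8$'', while \textbf{G7} and \textbf{G8} rule out the scenarios where $Q_2$ has $2n_1$ vertices or contains a long cycle absorbing the factor $p$. The only surviving case is that $H_2$ is a tree on $n_2+1$ vertices, so $|Q_2|=n_2$. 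Finally, a short lemma (\cref{lem:recognise-clique}, via the inequality $\lambda_{\max}\le\sqrt{2e-n+1}$) shows that any $n_2$-vertex graph with eigenvalue $n_2-1$ must be $K_{n_2}$. Both of these auxiliary lemmas and the multi-case prime-tracking argument are missing from your sketch, and they constitute the bulk of the proof.

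A minor point: once $K_{n_2}$ has been peeled off, you do not need to ``re-run'' the argument of \cref{lem:A-prime}; the remaining graph has exactly the spectrum of $\on{line}(G_1)$ with $G_1$ satisfying all the hypotheses of that lemma, so you may simply cite it.
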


It remains to prove these lemmas. First, \cref{lem:special-count} follows quite simply from \cref{cor:dirichlet-PNT}.
\begin{proof}[Proof of \cref{lem:special-count}]
    For sufficiently large $n$, \cref{cor:dirichlet-PNT} guarantees the existence of prime numbers $p,n_1$ such that $n-n_1\not\equiv 3\pmod 4$, and such that
    \[|n_1-0.45n|\le 0.001n,\quad |p-0.2n|\le 0.001n.\]
    Let $\ell=2p$, let $k=(n_1-\ell-1)/2$ (which is an integer since $n_1$ is an odd prime and $\ell$ is even), and let $n_2=n-n_1$. Then, it is easy to check that $\ell\le \max(12k,15)$ (this is the condition for a nice graph in \cref{def:nice}), and that \cref{item:m1-prime,item:ell-2prime,item:m2-not4,item:m1<m2,item:volume,item:ell-n1-n2} all hold. We claim that there are exponentially many graphs in $\mc Q_n$ with this specific choice of parameters.

    To see this, first note that different $(\ell,k)$-nice graphs have different line graphs (as depicted in \cref{fig:line-nice}, the $\ell$-cycle in a nice graph $G$ corresponds to an $\ell$-cycle in $\on{line(G)}$, and 1-hubs and 2-hubs in $G$ correspond to 1-houses and 2-houses in $\on{line(G)}$). So, it suffices to prove that there are exponentially many graphs in $\mc G_n$ with our specific choice of parameters.
    
    An $(\ell,k)$-nice graph is specified by a sequence of $k-1$ binary choices (every pair of consecutive 2-hubs can be at distance 4 or 6). Each of the different ways to make these binary choices lead to different (non-isomorphic) graphs. So, there are $2^{k-1}$ different $(\ell,k)$-nice graphs, meaning that \[|\mc Q_n|\ge 2^k\ge 2^{((0.45-0.001)n-2(0.2+0.001)n-1)/2}\ge e^{0.01n}.\qedhere\]
\end{proof}
Then, to prove \cref{lem:special-DS} we need a more sophisticated version of the arguments used to prove \cref{lem:A-prime}. In particular, we will need the following more detailed version of the case distinction in the proof of \cref{lem:A-prime}.

\begin{lemma}\label{lem:component-cases}
    Let $n_0$ be as in \cref{eq:n0} and let $Q$ be a connected graph with more than $n_0$ vertices, such that all eigenvalues of $\mr{A}(Q)$ are at least $-2$, and such that zero is not an eigenvalue of $\mr{A}(Q)$. Then we can write $Q=\on{line}(H)$ for some connected $H$.
    \begin{enumerate}
        \item[\textbf{1.}] If $-2$ is not an eigenvalue of $\mr{A}(Q)$ then one of the following holds.
            \begin{enumerate}
                \item[\textbf{A.}] $H$ is an odd-unicyclic graph, and $f_{\mr{A}}(Q)=4$.
                \item[\textbf{B.}] $H$ is a tree, and $f_{\mr{A}}(Q)$ is the number of vertices of $H$.
            \end{enumerate}
        \item[\textbf{2.}] \medskip If $-2$ is an eigenvalue of $\mr{A}(Q)$ with multiplicity 1, and if $f_{\mr{A}}(Q)$ is not divisible by 8, then $H$ is an even-unicyclic graph (with $v$ vertices and a cycle of length $\ell$, say), and $f_{\mr{A}}(Q)=v\ell$.
        %then one of the following holds.
            % \begin{enumerate}
            %     \item[\textbf{A.}] $H$ is a graph whose number of edges is one more than its number of vertices. In this case, $f_{\mr{A}}(Q)$ is divisible by 8.%, and is at most 4 times the number of vertices in $H$. \mk{this latter bound has a problem}
            %     \item[\textbf{B.}] $H$ is an even-unicyclic graph (with $v$ vertices and a cycle of length $\ell$, say), and $f_{\mr{A}}(Q)=v\ell$.
            % \end{enumerate}
    \end{enumerate}
    %Let $G$ be a non-bipartite connected graph with $n$ vertices and $n+1$ edges. Then $f_{\mr A}(G)$ is divisible by 8.
\end{lemma}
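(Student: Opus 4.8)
The plan is to feed the hypotheses into the Cameron--Goethals--Seidel--Shult theorem, translate the behaviour of the eigenvalue $-2$ into combinatorial structure of $H$ via \cref{lem:-2-multiplicity}, and then evaluate $f_{\mr A}(Q)=f_{|\mr L|}(H)$ in each surviving case using the matrix-tree theorem and \cref{thm:characteristic-coefficients-description}. First, since $Q$ is connected with more than $n_0\ge 36$ vertices and all its adjacency eigenvalues are at least $-2$, \cref{thm:CGSS} says $Q$ is a generalised line graph; by \cref{lem:generalised-line-graph-nonzero} together with the hypothesis that $0$ is not an eigenvalue of $\mr A(Q)$, $Q$ must in fact be a genuine line graph, so $Q=\on{line}(H)$, and after deleting isolated vertices we may assume $H$ is connected. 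Write $v$ and $e$ for the numbers of vertices and edges of $H$, so that $Q$ has $e$ vertices and $e>n_0$. By \cref{prop:A-SL-line-graph} we have $f_{\mr A}(Q)=f_{|\mr L|}(H)$ throughout.

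Next I would invoke \cref{lem:-2-multiplicity}. If $-2$ is not an eigenvalue of $\mr A(Q)$, its multiplicity is $0$, so $H$ is a tree (with $e=v-1$) when $H$ is bipartite, and is unicyclic with an odd cycle (with $e=v$) when it is not. In the tree case $H$ is bipartite, so $|\mr L(H)|$ and $\mr L(H)$ have the same spectrum, and the matrix-tree theorem (\cref{thm:kirchhoff}) gives $f_{|\mr L|}(H)=v\cdot 1=v$ (a tree has a unique spanning tree), which is conclusion~\textbf{B}. In the odd-unicyclic case, the only spanning TU-subgraph of $H$ with $v$ edges is $H$ itself (such a subgraph can have no tree component, and $H$ has a single cycle), contributing $\alpha(H)=4$, so by \cref{thm:characteristic-coefficients-description} $f_{|\mr L|}(H)=4$, which is conclusion~\textbf{A}. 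If $-2$ has multiplicity exactly $1$, then either $H$ is bipartite with $e=v$, so $H$ is unicyclic with an even cycle of some length $\ell$ and \cref{lem:unicyclic-f} gives $f_{|\mr L|}(H)=v\ell$ as required, or $H$ is non-bipartite with $e=v+1$. It therefore remains, under the extra hypothesis that $f_{\mr A}(Q)$ is not divisible by $8$, to exclude this last possibility, which I will do by showing that $8\mid f_{|\mr L|}(H)$ for every connected non-bipartite graph $H$ with $e=v+1$.

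For this, I would use the expansion $f_{|\mr L|}(H)=\det|\mr L(H)|=\sum_{S}4^{c(S)}$, where $S$ ranges over the edge subsets of size $v$ for which every component of $(V(H),S)$ is odd-unicyclic and $c(S)$ is the number of such components --- this is \cref{thm:characteristic-coefficients-description} applied to $|\mr L(H)|$ (equivalently the Binet--Cauchy formula for $\mr N(H)\mr N(H)^{T}$). Since $e=v+1$, each such $S$ equals $E(H)\setminus\{f\}$ for a single edge $f$, and every term with $c(S)\ge 2$ is divisible by $16$; so modulo $8$ we are left with $4$ times the number of edges $f$ for which $H-f$ is connected and odd-unicyclic. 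For a non-bridge $f$, the graph $H-f$ is connected and unicyclic, and its unique cycle is the unique simple cycle of $H$ avoiding $f$. Using that $H$ has cyclomatic number $2$ and hence at most three simple cycles --- its $2$-core being homeomorphic either to a theta graph or to two cycles joined by a (possibly trivial) path --- one finds that this count equals $\sum_{C}\bigl(B-|E(C)|\bigr)$, summed over the odd simple cycles $C$ of $H$, where $B$ is the number of non-bridge edges of $H$. A short parity check in the two structural cases, using that $H$ is non-bipartite precisely when at least one of its simple cycles is odd and that an even cycle has an even number of edges, shows this quantity is always even; hence $8\mid\det|\mr L(H)|$, giving the desired contradiction.

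The only step with any real content is this final parity computation; everything else is bookkeeping with tools already in hand. The points to watch in that computation are that in the ``two cycles joined by a path'' case the third nonzero element of the cycle space is a disjoint union of two cycles rather than a simple cycle (so it never arises as the cycle of some $H-f$), and that the trees hanging off the $2$-core of $H$ affect neither the parity of $B$ nor the parities of the cycle lengths, so it is enough to treat the $2$-core.
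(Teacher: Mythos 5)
Your proposal is correct and follows essentially the same route as the paper: \cref{thm:CGSS} plus \cref{lem:generalised-line-graph-nonzero} to get a genuine line graph, \cref{lem:-2-multiplicity} to read off the structure of $H$, and a case analysis on the 2-core (two edge-disjoint cycles joined by a path, or a theta graph) to show $8\mid f_{\mr{A}}(Q)$ in the excluded non-bipartite case with $e=v+1$. Your mod-$8$ reformulation as a parity count of non-bridge edges whose removal leaves an odd-unicyclic graph is just a repackaging of the paper's explicit evaluation of the top characteristic coefficient in each subcase, and your verification of it checks out.
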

\begin{proof}
The initial part of the lemma (that $Q$ is a line graph) follows from \cref{thm:CGSS,lem:generalised-line-graph-nonzero}. Then, the structural descriptions in \textbf{1A} and \textbf{1B} follow from \cref{thm:SL-bipartite} (specifically, \textbf{A} corresponds to the case where $H$ is not bipartite, and \textbf{B} corresponds to the case where $H$ is bipartite), and the statements about $f_{\mr{A}}(Q)$ are immediate consequences of \cref{thm:characteristic-coefficients-description}.

For \textbf{2}, we can similarly apply \cref{thm:SL-bipartite}, considering the cases where $H$ is or is not bipartite. We see that either $H$ is an even-unicyclic graph (in which case the statement about $f_{\mr{A}}(Q)$ follows from \cref{lem:unicyclic-f}), or $H$ is a non-bipartite graph whose number of edges is one more than its number of vertices. We need to rule out this latter case (showing that whenever it occurs, $f_{\mr{A}}(Q)$ is divisible by 8).

So, suppose that $H$ is non-bipartite and its number of edges is one more than its number of vertices. Let $H'$ be the \emph{2-core} of $H$; its largest subgraph with minimum degree at least 2. One can obtain the 2-core by iteratively peeling off leaf vertices (in any order) until no leaves remain. There are two possibilities for the structure of $H'$:
    \begin{enumerate}
        \item[\textbf{I.}] $H'$ consists of two edge-disjoint cycles with a single path between them (this path may have length zero), or
        \item[\textbf{II.}] $H'$ is a ``theta graph'', consisting of two vertices with three internally disjoint paths between them.
    \end{enumerate}
    \textbf{Case I.} In the first case, write $C_1,C_2$ for the two cycles, and let $\ell_1,\ell_2$ be their lengths. For $H$ to be non-biparitite, at least one of $\ell_1,\ell_2$ must be odd (suppose without loss of generality that $\ell_1$ is odd).
    \begin{itemize}
        \item If $\ell_2$ is even, then the largest TU-subgraphs of $H$ are the odd-unicyclic subgraphs obtained by deleting a single edge from $C_2$. So, by \cref{thm:characteristic-coefficients-description} we have $f_{|\mr{L}|}(H)=4\ell_2$, which is divisible by 8.
        \item If $\ell_2$ is odd, then the largest TU-subgraphs of $H$ are the odd-unicyclic subgraphs obtained by deleting a single edge from $C_1$ or $C_2$, and the disconnected subgraphs (with two odd-unicyclic components) obtained by deleting an edge on the unique path between $C_1$ and $C_2$. Writing $\ell_3$ for the length of the path between $C_1$ and $C_2$, by \cref{thm:characteristic-coefficients-description} we have $f_{|\mr{L}|}(H)=4(\ell_1+\ell_2)+4^2\ell_3$, which is divisible by 8.
    \end{itemize}
    \textbf{Case II.} In the second case, write $P_1,P_2,P_3$ for the three internally disjoint paths, and let $\ell_1,\ell_2,\ell_3$ be their lengths. For $H$ to be non-biparitite, it cannot be the case that $\ell_1,\ell_2,\ell_3$ all have the same parity. Suppose without loss of generality that $\ell_1$ is even and $\ell_2$ is odd.
    \begin{itemize}
        \item If $\ell_3$ is even, then the largest TU-subgraphs of $H$ are the odd-unicyclic subgraphs obtained by deleting a single edge from $P_1$ or $P_3$. So, by \cref{thm:characteristic-coefficients-description} we have $f_{|\mr{L}|}(H)=4(\ell_1+\ell_3)$, which is divisible by 8.
        \item If $\ell_3$ is odd, then the largest TU-subgraphs of $H$ are the odd-unicyclic subgraphs obtained by deleting a single edge from $P_2$ or $P_3$. So, by \cref{thm:characteristic-coefficients-description} we have $f_{|\mr{L}|}(H)=4(\ell_2+\ell_3)$, which is divisible by 8.\qedhere
    \end{itemize}
\end{proof}

We also need the following consequence of \cref{lem:A-degree-inequality}(2), allowing us to recognise a complete graph by its number of vertices and its largest eigenvalue.
\begin{lemma}\label{lem:recognise-clique}
    Let $G$ be a graph with $n$ vertices, such that $\mr{A}(G)$ has $n-1$ as an eigenvalue. Then $G$ is a complete graph.
\end{lemma}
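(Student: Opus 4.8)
The plan is to turn the spectral hypothesis into an edge-count lower bound and then note that only $K_n$ can attain it. Write $m$ for the number of edges of $G$, and let $\lambda_{\mr{max}}$ be the largest eigenvalue of $\mr{A}(G)$. Since $n-1$ is assumed to be an eigenvalue of $\mr{A}(G)$, we certainly have $\lambda_{\mr{max}}\ge n-1$. On the other hand, \cref{lem:A-degree-inequality}(2) gives $\lambda_{\mr{max}}\le\sqrt{2m-n+1}$. Combining these two bounds and squaring yields $(n-1)^2\le 2m-n+1$, which rearranges to $2m\ge (n-1)^2+(n-1)=n(n-1)$, i.e. $m\ge\binom{n}{2}$.

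Since a simple graph on $n$ vertices has at most $\binom{n}{2}$ edges, it follows that $m=\binom{n}{2}$, and the unique $n$-vertex graph with $\binom{n}{2}$ edges is $K_n$. This finishes the proof.

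There is essentially no obstacle here: the whole argument is a one-line consequence of \cref{lem:A-degree-inequality}(2), with the only thing to verify being the trivial inequality $\lambda_{\mr{max}}\ge n-1$. (One could alternatively phrase the same computation in terms of $\lambda_{\mr{max}}\le\Delta\le n-1$ from \cref{lem:A-degree-inequality}(1) together with a Perron--Frobenius-type argument, but the edge-count route via part (2) is the shortest and is exactly what is needed later in \cref{sec:adjacency-2}.)
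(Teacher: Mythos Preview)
Your proof is correct and is essentially identical to the paper's own argument: both use \cref{lem:A-degree-inequality}(2) to turn $\lambda_{\mr{max}}\ge n-1$ into the edge bound $m\ge\binom{n}{2}$, and then conclude that $G=K_n$.
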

\begin{proof}
    Let $e$ be the number of edges of $G$, and let $\lambda_{\mr{max}}$ be the largest eigenvalue of $\mr{A}(G)$. Then \cref{lem:A-degree-inequality}(2) implies that $n-1\le \lambda_{\mr{max}}\le \sqrt{2e-n+1}$, or equivalently that $e\ge n(n-1)/2$; the only graph with this many edges is a complete graph.
\end{proof}

Now we prove \cref{lem:special-DS}, completing the proof of \cref{thm:main}.
\begin{proof}[Proof of \cref{lem:special-DS}]
    Let $G\in \mc G_n$ (with parameters $\ell,k,n_2,n_1,p$ as in \cref{def:Qn}), and let $Q$ be a graph with the same adjacency spectrum as $\on{line}(G)$. Our objective is to prove that $Q$ has the complete graph $K_{n_2}$ as a connected component. Indeed, if we are able to prove this, then we can apply \cref{lem:A-prime} to the graph that remains after removing this $K_{n_2}$ component (here we are using \cref{fact:disjoint-union} to see that removing this $K_{n_2}$ component has a predictable effect on the spectrum).

    As is well-known (see for example \cite[Section~1.4.1]{BH12}), the eigenvalues of a complete graph $K_{n_2}$ are $-1$ (with multiplicity $n_2-1$) and $n_2-1$ (with multiplicity 1). So (by \cref{fact:disjoint-union}), as in the proof of \cref{lem:A-prime} we can see that in the spectrum of $\mr{A}(Q)$ there is no zero eigenvalue, and $-2$ appears as an eigenvalue with multiplicity 1. Also, by \cref{fact:disjoint-union,prop:A-SL-line-graph,lem:unicyclic-f} we have
    \begin{equation}\label{eq:f(Q)-2}
        f_{\mr{A}}(Q)=(n_2+1)n_1\ell=2(n_2+1)n_1p.
    \end{equation}
    Recalling \cref{eq:f(Q)-2} and \cref{item:m2-not4}, we see that $f_{\mr{A}}(Q)$ is not divisible by 8 (so by \cref{fact:disjoint-union}, $f_{\mr{A}}(Q_i)$ is not divisible by 8 for any connected component $Q_i$ of $Q$)
    
    Now, \cref{fact:disjoint-union} tells us that some connected component $Q_2$ of $Q$ must have $n_2-1$ as an eigenvalue. Let $\Delta_2$ be the maximum degree of $Q_2$, so
    % Let $n_2$ and $\Delta_2$ be the number of vertices and maximum degree of $H_2$, so
    by \cref{lem:A-degree-inequality}(1) we have
    \begin{equation}\label{eq:Delta2}
         \Delta_2\ge n_2-1.
    \end{equation}
    In particular, $Q_2$ has at least $\Delta_2+1\ge n_2$ vertices, so by \cref{lem:component-cases} and the assumptions $n_1>n_2\ge n_0$ from \cref{item:m1-prime,item:m1<m2}, we can write $Q_2=\on{line}(H_2)$ for some graph $H_2$. Let $v_2$ be the number of vertices in $H_2$.
    
    Now, we consider the cases in \cref{lem:component-cases} (\textbf{1A}, \textbf{1B} and \textbf{2}) for the structure of $H_2$. We will show that all these cases lead to contradiction except \textbf{1B} (i.e., $H_2$ is a tree), and in that case we will prove that $v_2=n_2+1$ vertices (so $H_2$ has $n_2$ edges and $Q_2$ has $n_2$ vertices; this suffices to show that $Q_2$ is our desired $K_{n_2}$ component, by \cref{lem:recognise-clique}).

    % \medskip\noindent\textbf{Case 2A: the number of edges in $H_2$ is one more than its number of vertices.} In this case, $f_{\mr{A}}(Q_2)$ is divisible by $8$. But this is impossible, because $f_{\mr{A}}(Q)$ is not divisible by $8$, recalling \cref{eq:f(Q)-2} and \cref{item:m2-not4}.

    \medskip\noindent\textbf{Case 1A: $H_2$ is odd-unicyclic.} In this case we have $f_{\mr{A}}(Q_2)=4$. Since $f_{\mr{A}}(Q)$ is divisible by the prime number $n_1$, there must be some component $Q_1\ne Q_2$ such that $f_{\mr{A}}(Q_1)$ is divisible by $n_1$. Recall from \cref{eq:f(Q)-2} that $f_{\mr{A}}(Q)$ is not divisible by 8, so $f_{\mr{A}}(Q_1)$ must be odd.
    
    By \cref{lem:component-cases} (and the assumption $n_1>n_0$ from \cref{item:m1-prime}), we can write $Q_1=\on{line}(H_1)$ for some graph $H_1$. Let $v_1$ be the number of vertices in $H_1$. Considering all cases of \cref{lem:component-cases}, the only possibility that leads to $f_{\mr{A}}(Q_1)$ being odd is the case where $H_1$ is a tree (whose number of vertices $v_1$ is odd and divisible by $n_1$). Now, we can proceed similarly to \textbf{Case 1} in the proof of \cref{lem:A-prime}.
    
    Note that $Q_1$ has $v_1-1$ vertices and $Q_2$ has $v_2\ge \Delta_2+1\ge n_2$ vertices (for the latter inequality, we used \cref{eq:Delta2}). So, $v_1-1+n_2\le n$, or equivalently $v_1\le n_1+1$. Since $v_1$ is divisible by $n_1$ we must have $v_1=n_1$, so $Q$ only has room for one other component $Q_3$ (other than $Q_1,Q_2$), consisting of a single isolated vertex. If this component exists, it has $f_{\mr{A}}(Q_3)=1$. We then compute $f_{\mr{A}}(Q)=f_{\mr{A}}(Q_1)f_{\mr{A}}(Q_2)=4n_1$, which is not consistent with \cref{eq:f(Q)-2}. So, this case is impossible.

    \medskip\noindent\textbf{Case 1B: $H_2$ is a tree.} In this case $f_{\mr{A}}(Q_2)=v_2$. Our objective is to prove that $H_2$ has $n_2+1$ vertices (this suffices, by \cref{lem:recognise-clique}).
    %If we can prove this claim, the desired result will follow from \cref{lem:recognise-clique}.
    %it will follow from \cref{lem:A-degree-inequality}(2) that $m_2-1\le \lambda_{\mr{max}}(Q_2)\le \sqrt{2e(Q_2)-(m_2-1)}$, where $e(Q_2)$ is the number of edges in $Q_2$ and $\lambda_{\mr{max}}(Q_2)$ is the largest eigenvalue of $\mr{A}(Q_2)$. Rearranging, it will follow that $e(Q_2)\ge m_2(m_2-1)/2$, meaning that $Q_2$ is a complete graph on $m_2$ vertices, as desired.
    We need to carefully consider various possibilities for the connected components which are responsible for the large prime factors $n_1$ and $p$ of $f_{\mr{A}}(Q)$. The details will be a bit delicate.

    First, note that $Q_2$ has $v_2-1$ vertices; recalling \cref{eq:Delta2}, we have
    \begin{equation}v_2-1\ge \Delta_2+1\ge n_2.\label{eq:v2}\end{equation}
    
    Now, suppose that $v_2$ is divisible by $n_1$ (we will show that this leads to contradiction). By \cref{eq:v2,item:m1<m2}, we have $v_2> n_1+1$, so in order for $n_1$ to divide $v_2$ we must have $v_2\ge 2n_1$. It cannot be the case that $v_2$ is divisible by $p$ as well as $n_1$ (this would cause $v_2$ to be far too large, noting that $Q_2$ has $v_2-1\le n$ vertices), so there must be some component $Q^*\ne Q_2$ such that $f_{\mr{A}}(Q^*)$ is divisible by $p$. Considering all cases in \cref{lem:component-cases}, we see that this is only possible if $Q^*$ has at least $p-1$ vertices (as the line graph of a graph with at least $p-1$ edges). But then $Q^*$ and $Q_2$ together have at least $(2n_1-1)+(p-1)$ vertices, which contradicts \cref{item:volume}.

    So, $v_2$ cannot be divisible by $n_1$, and there must be some component $Q_1\ne Q_2$ such that $f_{\mr{A}}(Q_1)$ is divisible by $n_1$. By \cref{lem:component-cases}, we can write $Q_1=\on{line}(H_1)$ for some graph $H_1$. %Recalling that $Q_2$ has at least $v_2-1\ge m_2$ vertices (by \cref{eq:Delta2}), there is only room for $Q_1$ to have at most $m_1$ vertices.
    
    Next, suppose that $H_1$ is a tree (we will show that this leads to contradiction). By \cref{eq:v2}, there are at most $n_1$ vertices in components other than $Q_2$. Since $v_1$ is divisible by $n_1$, we must have $v_1=n_1$, meaning that $Q_1$ has $n_1-1$ vertices. So, $Q$ only has room for one other component $Q_3$ (other than $Q_1,Q_2$), consisting of a single isolated vertex, and $f_{\mr{A}}(Q)=f_{\mr{A}}(Q_1)f_{\mr{A}}(Q_2)=n_1v_2\le n_1(n_2+2)$ (here we used that $Q_2$ has at most $n_2+1$ vertices, so $v_2\le n_2+2$). This contradicts \cref{eq:f(Q)-2}.

    So, $Q_1$ is not the line graph of a tree. Considering all other cases in \cref{lem:component-cases}, we see that the only way for $f_{\mr{A}}(Q_1)$ to be divisible by $n_1$ is for $Q_1$ to have at least $n_1$ vertices. Recalling \cref{eq:v2}, we deduce that $Q_2$ has exactly $n_2$ vertices, as desired.

    \medskip\noindent\textbf{Case 2: $H_2$ is even-unicyclic.} Let $\ell_2=2q$ be the length of the cycle in $H_2$, so $f_{\mr{A}}(Q_2)=v_2\ell_2$. We will again need to consider various possibilities for the connected components which are responsible for the large prime factors $n_1$ and $p$ of $f_{\mr{A}}(Q)$ (in each case we need to reach a contradiction), but the details will be even more delicate.

    \begin{itemize}
        \item First, suppose that $v_2$ is divisible by $n_1$. Note that $Q_2$ has $v_2$ vertices. Recalling \cref{eq:Delta2} and \cref{item:m1<m2}, we have $v_2\ge \Delta_2+1\ge n_2> n_1$, and we also have $v_2\le n<3n_1$ by \cref{item:volume}, so in order for $n_1$ to divide $v_2$ we must have $v_2=2n_1$. We consider possibilities for the prime factor $p$.
        \begin{itemize}
            \item Similarly to \textbf{Case 1B}, it cannot be the case that $v_2$ is divisible by $p$ as well as $n_1$ (this would cause $v_2$ to be too large).
            \item Also similarly to \textbf{Case 1B}, it cannot be the case that there is another component $Q^*\ne Q_2$ such that $f_{\mr{A}}(Q^*)$ is divisible by $p$ (then $Q^*$ would have to have at least $p-1$ vertices by \cref{lem:component-cases}, and $Q^*$ and $Q_2$ together would have at least $2n_1+(p-1)$ vertices, contradicting \cref{item:volume}).
            \item Recalling that $f_{\mr{A}}(Q_2)=2v_2q$, the remaining case is that $q$ is divisible by $p$. In this case we have $\ell_2\ge \ell$, i.e., the cycle in $H_2$ has length at least $\ell$. Each of the $v_2$ edges in $H_2$ can be incident to at most two of the edges of this cycle, so $\Delta_2\le v_2-\ell+2=2n_1-\ell+2$. But then \cref{eq:Delta2} and \cref{item:ell-n1-n2} are inconsistent with each other.
        \end{itemize}
        \item\medskip So, $v_2$ is not divisible by $n_1$. Suppose next that $q$ is divisible by $n_1$, so the cycle of $H_2$ has length at least $2n_1$ and $\Delta_2\le v_2-2n_1+2\le n-2n_1+2$. But then \cref{eq:Delta2} implies $p\le n_1\le n_2-1\le n-2n_1+2$ (using \cref{item:m1<m2}), which is inconsistent with \cref{item:volume}.
        \item \medskip The only remaining possibility is that there is some component $Q_1\ne Q_2$ such that $f_{\mr{A}}(Q_1)$ is divisible by $n_1$. By \cref{lem:component-cases}, $Q_1$ has at least $n_1-1$ vertices, meaning that there are only $n_2+1$ vertices left for $Q_2$. By \cref{eq:Delta2}, $Q_2$ must have at least $\Delta_2+1\ge n_2$ vertices.
        \begin{itemize}
            \item If $Q_2$ has $n_2$ vertices, then some vertex in $Q_2$ is adjacent to all the other vertices in $Q_2$, meaning that some edge of $H_2$ is incident to all the other edges in $H_2$. This is not possible, recalling that $H_2$ is an even-unicyclic graph.
            \item The only other possibility is that $H_2$ has $n_2+1$ vertices, meaning that $Q_1$ has $n_1-1$ vertices (and $Q_1,Q_2$ are the only components of $Q$). This is only possible if $H_1$ is an $n_1$-vertex tree, recalling the cases in \cref{lem:component-cases}. Then, some vertex in $Q_2$ is adjacent to all but one of the other vertices in $Q_2$, meaning that some edge of $H_2$ is incident to all but one of the other edges in $H_2$. This can only happen if $\ell_2=4$. We deduce that $f_{\mr{A}}(Q)=f_{\mr{A}}(Q_1)f_{\mr{A}}(Q_2)=n_1\cdot 4(n_2+1)$, which is not consistent with \cref{eq:f(Q)-2}.\qedhere
        \end{itemize}
    \end{itemize}
\end{proof}
%\section{Putting everything together}\label{sec:together}
\bibliographystyle{amsplain_initials_nobysame_nomr}
\bibliography{main.bib}
\ifarxiv
\begin{appendix}
\section{Determining graphs by their signless Laplacian spectrum}\label{sec:SL-extra}
In this section we prove \cref{thm:main-SL}, that there are exponentially many graphs determined by their signless Laplacian.

If $n$ is odd, \cref{thm:main-SL} follows immediately from \cref{lem:nice-SLDS} (fix some $\ell$ satisfying $\ell\equiv 2\pmod 4$ and say $|\ell-0.9n|\le 4$, let $k=(n-\ell-1)/2$, and consider all $2^{k-1}$ different $(\ell,k)$-nice graphs). If $n$ is even, we will need to proceed in a similar way to \cref{thm:main}, considering a family of ``augmented'' nice graphs designed to satisfy certain inequalities and number-theoretic properties. (The augmentation is simpler because \cref{lem:nice-SLDS} is less restrictive than \cref{lem:A-prime}).
\begin{definition}\label{def:Fn}
For \emph{even} $n$, let $\mathcal F_n$ be the family of graphs $G$ satisfying the following properties.
\begin{enumerate}[{\bfseries{F\arabic{enumi}}}]
    \item $G$ has two components. One of these components is an $(\ell,k)$-nice graph $G_1$ (for some parameters $\ell,k$ satisfying $\ell\ge \max(12k,15)$), and the other of these components is an isolated vertex.
    \item Writing $n_1=\ell+2k+1$ for the number of vertices of $G_1$, we have $n_1+1=n$ (i.e., $G$ has $n$ vertices).
    \item\label{item:ell-2prime-SL} $\ell=2p$ for an odd prime number $p$.
    \item\label{item:volume-SL} $n<3p$.
    \item\label{item:volume-SL-2} $2(n-p)>n_1$.
\end{enumerate}
\end{definition}

\begin{lemma}\label{lem:special-count-SL}
    There is a constant $c>0$ such that $|\mathcal F_n|\ge e^{cn}$ for every sufficiently large even $n$.
\end{lemma}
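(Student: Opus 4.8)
The plan is to mimic the proof of \cref{lem:special-count} (the adjacency analogue), but with fewer constraints to satisfy, since \cref{def:Fn} is simpler than \cref{def:Qn}. The key point is that the number of $(\ell,k)$-nice graphs with fixed $\ell,k$ is $2^{k-1}$ (one binary choice — distance $4$ or $6$ — for each of the $k-1$ gaps between consecutive $2$-hubs), that distinct nice graphs are non-isomorphic, and that attaching an isolated vertex does not affect this. So it suffices to produce a single admissible choice of parameters $(\ell,k)$ with $k=\Omega(n)$ satisfying \textbf{F1}--\textbf{F5}, and the count $|\mathcal F_n|\ge 2^{k-1}\ge e^{cn}$ follows.

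First I would fix target proportions. Take $p$ to be an odd prime with $|p-0.1n|$ small; \cref{cor:dirichlet-PNT} (applied with, say, modulus $d=2$, residue $a=1$, and $\varepsilon$ a small constant like $0.01$) guarantees such a $p$ exists for all large $n$. Set $\ell=2p$, so $\ell\equiv 0\pmod 4$ is automatic from $\ell=2p$ with $p$ odd — wait, I should be careful: $2p$ with $p$ odd gives $\ell\equiv 2\pmod 4$, which is exactly what \cref{lem:nice-SLDS} needs, so this is consistent (note \cref{def:Fn} does not itself demand $\ell\equiv 2\pmod4$, but we will want it for the DS conclusion in the proof of \cref{thm:main-SL}; in any case it holds). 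Then, since $n$ is even and $\ell=2p$ is even, $n_1=n-1$ is odd, so $k=(n_1-\ell-1)/2=(n-2-2p)/2$ is an integer. With $p\approx 0.1n$ we get $k\approx 0.4n$, which is $\Omega(n)$ and in particular positive for large $n$. I would then verify the niceness side-condition $\ell\ge\max(12k,15)$: this fails for these proportions since $\ell\approx 0.2n$ while $12k\approx 4.8n$, so I must instead choose $p$ closer to $n/2$. Concretely, pick $p$ with $|p-0.45n|\le 0.001n$; then $\ell=2p\approx 0.9n$, $k=(n-2-2p)/2\approx 0.05n$, and $12k\approx 0.6n<\ell$, $\ell\ge 15$, so \textbf{F1} holds; $k\ge 1$ for large $n$.

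Next I would check the remaining numbered conditions with this choice. \textbf{F2} ($n_1+1=n$) holds by construction. \textbf{F3} holds since $\ell=2p$ with $p$ an odd prime. For \textbf{F4} ($n<3p$): with $p\ge 0.449n$ we have $3p\ge 1.347n>n$. For \textbf{F5} ($2(n-p)>n_1=n-1$), i.e. $n-2p>-1$, i.e. $2p<n+1$: with $p\le 0.451n$ we get $2p\le 0.902n<n+1$. All five conditions hold, so $\mathcal F_n$ contains all $2^{k-1}$ of the $(\ell,k)$-nice graphs (with the isolated vertex appended), and since different nice graphs yield different graphs in $\mathcal F_n$, we conclude $|\mathcal F_n|\ge 2^{k-1}\ge 2^{0.04n}\ge e^{cn}$ for a suitable constant $c>0$ and all large even $n$.

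I do not expect a genuine obstacle here; the only thing requiring care is the arithmetic of simultaneously fitting the proportions so that $\ell\ge 12k$ (which forces $p$ near $n/2$ rather than near $0$) while keeping \textbf{F4} and \textbf{F5} — a short interval for $p/n$ around $0.45$ works, and \cref{cor:dirichlet-PNT} supplies a prime there congruent to $1\pmod 2$ (equivalently, any odd prime in the interval). One should also double-check the integrality of $k$: $n$ even and $2p$ even give $n_1=n-1$ odd and $n_1-\ell-1=n-2-2p$ even, so $k$ is an integer, as needed.
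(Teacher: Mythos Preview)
Your proposal is correct and follows essentially the same approach as the paper: choose a prime $p$ with $|p-0.45n|$ small via \cref{cor:dirichlet-PNT}, set $\ell=2p$, $n_1=n-1$, $k=(n-2-2p)/2$, verify \textbf{F1}--\textbf{F5} and the niceness condition $\ell\ge\max(12k,15)$, and conclude $|\mathcal F_n|\ge 2^{k-1}\ge e^{cn}$. The only difference is cosmetic (the paper uses tolerance $0.01n$ rather than $0.001n$, and omits your exploratory false start with $p\approx 0.1n$).
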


\begin{proof}
    By \cref{cor:dirichlet-PNT}, we can find a prime number $p$ such that $|p-0.45n|<0.01n$. Then, let $\ell=2p$, $k=(n-1-\ell-1)/2$ and $n_1=n-1$. It is easy to check that $\ell\le \max(12k,15)$ and \cref{item:ell-2prime-SL,item:volume-SL,item:volume-SL-2} all hold. Then, we simply observe that there are
    \[2^{k-1}\ge 2^{(n-2-2(0.46)n)/2}\ge e^{0.01n}\]
    different $(\ell,k)$-nice graphs with this choice of parameters.
\end{proof}

\begin{lemma}\label{lem:special-SLDS}
    Every graph in $\mathcal F_n$ is determined by its signless Laplacian spectrum.
\end{lemma}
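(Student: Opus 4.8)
The plan is to mimic the structure of the proof of \cref{lem:A-prime} (and \cref{lem:special-DS}), but in the signless Laplacian world directly, where the situation is cleaner because we do not need to pass through line graphs or invoke the Cameron--Goethals--Seidel--Shult theorem. Let $G\in\mathcal F_n$ with parameters $\ell=2p$, $k$, $n_1=\ell+2k+1=n-1$, so $G$ is the disjoint union of an $(\ell,k)$-nice graph $G_1$ with an isolated vertex. Let $H$ be a graph with the same signless Laplacian spectrum as $G$; the goal is to show $H\cong G$. By \cref{prop:degree-moments}(1), $H$ has $n$ vertices, and by \cref{prop:degree-moments}(2) it has $n-1$ edges (the same edge count as $G$, since the isolated vertex contributes no edges). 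So $H$ is a graph on $n$ vertices with $n-1$ edges.

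First I would compute $f_{|\mr L|}(G)$: since $G_1$ is bipartite unicyclic on $n_1$ vertices with cycle length $\ell$, \cref{lem:unicyclic-f} gives $f_{|\mr L|}(G_1)=n_1\ell=2n_1p$, and the isolated vertex contributes a factor of $1$, so $f_{|\mr L|}(G)=2n_1p$. In particular $f_{|\mr L|}(G)$ is not divisible by $4$ (as $n_1=n-1$ is odd and $p$ is an odd prime), so by \cref{lem:new-SL-bipartite} every connected component of $H$ is bipartite. Now write $H_1,\dots,H_c$ for the connected components of $H$. Each is bipartite, hence by \cref{thm:SL-bipartite}(1) each $f_{|\mr L|}(H_i)$ is $v_i$ times the number of spanning trees of $H_i$ (by Kirchhoff, \cref{thm:kirchhoff}), which is a positive integer; and $f_{|\mr L|}(H)=\prod_i f_{|\mr L|}(H_i)=2n_1p$. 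A component with $v_i$ vertices and $e_i$ edges has $e_i\ge v_i-1$, with equality iff it is a tree; since $\sum e_i=n-1=\sum v_i-1$, exactly one component (say $H_1$, relabelling) is unicyclic and all others are trees. A tree component on $v_i$ vertices has $f_{|\mr L|}(H_i)=v_i$, and the unicyclic component $H_1$ (bipartite, cycle length $\ell_1$) has $f_{|\mr L|}(H_1)=v_1\ell_1$ by \cref{lem:unicyclic-f}.

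Next I would pin down which component carries each of the large prime factors $n_1$ and $p$, using the inequalities \cref{item:volume-SL,item:volume-SL-2}; this is the technical heart. Since $p\mid f_{|\mr L|}(H)$, either $p\mid v_i$ for some tree component, or $p\mid v_1\ell_1$. If $p\mid v_i$ for a tree component, that component has at least $p$ vertices; if simultaneously some other component carries the factor $n_1$, we would need a component with $\ge n_1$ vertices, and $p+n_1>n$ is forced by combining $n_1=n-1$ with $p\ge 2$, but more carefully one uses \cref{item:volume-SL} ($n<3p$) and \cref{item:volume-SL-2} ($2(n-p)>n_1$) to derive a contradiction on total vertex count; a careful bookkeeping of the few cases (factor $n_1$ in the same component as a factor of $p$, in the unicyclic component, or in a separate component) shows the only consistent possibility is that the unicyclic component $H_1$ has exactly $n_1$ vertices and cycle length $\ell_1=\ell$, and all tree components together have exactly one vertex — i.e. $H$ is the disjoint union of a bipartite $(\ell_1,\cdot)$-unicyclic graph on $n_1$ vertices with cycle length $\ell$, together with a single isolated vertex. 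The argument here parallels the case analysis in the proof of \cref{lem:special-DS}, but with \cref{lem:component-cases} replaced by the simpler dichotomy ``tree or bipartite-unicyclic'' from \cref{thm:SL-bipartite}; the main obstacle is just organising the case split cleanly so that \cref{item:volume-SL,item:volume-SL-2} are invoked correctly.

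Finally, having reduced to the case where $H$ is the disjoint union of an isolated vertex and a bipartite unicyclic graph $H_1$ on $n_1$ vertices with cycle length $\ell\equiv 2\pmod 4$, I observe that $H_1$ has the same signless Laplacian spectrum as $G_1$ (their spectra differ from those of $H$ and $G$ only by the removal of one zero eigenvalue, by \cref{fact:disjoint-union}). Then \cref{lem:nice-SLDS} applies directly to $G_1$: any graph with the same signless Laplacian spectrum as the $(\ell,k)$-nice graph $G_1$ must be bipartite (we have this) and then, by \cref{lem:nice-LDS} together with \cref{fact:bipartite-L-SL}, must be isomorphic to $G_1$. Hence $H_1\cong G_1$ and $H\cong G$, which completes the proof. \qed
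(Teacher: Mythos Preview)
Your plan has the right overall shape, but it contains a genuine gap: you repeatedly treat $n_1$ as a prime factor of $f_{|\mr L|}(H)=2n_1p$ and organise your case analysis around ``which component carries the factor $n_1$''. In the family $\mathcal F_n$ there is \emph{no} primality hypothesis on $n_1$ (contrast with \cref{item:m1-prime} for $\mathcal G_n$); in the construction of \cref{lem:special-count-SL} one simply takes $n_1=n-1$. So the prime factors of $n_1$ could be spread across several components, and your case split as written does not go through.

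The fix is the key observation you are missing: once you know every component of $H$ is bipartite, \cref{thm:SL-bipartite}(1) says each component contributes exactly one zero eigenvalue to $|\mr L(H)|$, so the multiplicity of zero in the signless Laplacian spectrum \emph{equals the number of components}. Since $G$ has two bipartite components, so does $H$. This immediately gives one unicyclic component $H_1$ (with $v_1$ vertices and even cycle length $\ell_1=2q$) and one tree $H_2$ on $v_2=n-v_1$ vertices, with $f_{|\mr L|}(H)=2qv_1v_2=2n_1p$. Now only the genuine prime $p$ needs to be tracked. The paper also extracts one more piece of parity information you did not mention: since $2qv_1v_2$ is not divisible by $4$, both $v_1$ and $v_2$ are odd. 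This, together with \cref{item:volume-SL,item:volume-SL-2}, rules out $p\mid v_1$ and $p\mid v_2$ (forcing $v_i=p$ in either case and then $q\cdot(\text{other }v_j)=n_1$, impossible since $n-p<n_1<2(n-p)$), leaving only $p\mid q$, whence $v_1v_2\le n-1$ and one of $v_1,v_2$ equals $1$. From there your final paragraph is correct.
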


\begin{proof}
% Note that for any number of vertices there is always at least one graph determined by its signless Laplacian spectrum (e.g., the complete graph), so we may assume $n$ is sufficiently large.

% We already know from \cref{lem:nice-SLDS} that for all $\ell,k$ satisfying $\ell\equiv 2\pmod 4$ and $\ell\ge \max(12k,15)$, each of the $2^{k-1}$ different $(\ell,k)$-nice graphs are determined by their signless Laplacian spectrum. If $n$ is even, the desired result follows immediately: for example, we can take some $\ell\equiv 2\pmod 4$ such that $|\ell-0.9n|\le 4$, and let $k=(n-\ell-1)/2$.
Consider $G\in \mc F_n$ (with parameters $n_1,\ell,p,k$ as in \cref{def:Fn}, and suppose $H$ has the same spectrum as $G$. We need to prove that $H$ has an isolated vertex (then, after removing this isolated vertex we can apply \cref{lem:nice-SLDS}).

An isolated vertex has no nonzero eigenvalues, so by \cref{lem:unicyclic-f}, we have \begin{equation}
    f_{|\mr{L}|}(H)=f_{|\mr{L}|}(G)=f_{|\mr{L}|}(G_1)=n_1\ell=2n_1 p,\label{eq:f(H)}
\end{equation}
which is not divisible by 4 (since $\ell\equiv2\pmod 4$, and $n_1$ is odd). So, by \cref{lem:new-SL-bipartite}, $H$ is bipartite.

By \cref{fact:disjoint-union,thm:SL-bipartite}, the multiplicity of the zero eigenvalue tells us the number of bipartite connected components. So, both $G$ and $H$ have exactly two connected components. By \cref{prop:degree-moments}, both $G$ and $H$ have exactly $n$ vertices and $n-1$ edges; the only way this is possible is for one component to be unicyclic and one component to be a tree.

Let $H_1$ be the unicyclic component of $H$. Suppose that it has $v_1$ vertices and a cycle of length $\ell_1=2q$ (bipartite graphs can only have even cycles). Let $H_2$ be the tree component of $H$, which has $v_2=n-v_1$ vertices. By \cref{thm:characteristic-coefficients-description,lem:unicyclic-f}, we have
\begin{equation}f_{|\mr{L}|}(H)=\ell_1v_1v_2=2qv_1(n-v_1).\label{eq:f(H)-2}\end{equation}
As we have just discussed, this is not divisible by 4, so both $v_1$ and $(n-v_1)$ are odd. Also, by \cref{eq:f(H)}, $f_{|\mr{L}|}(H)$ is divisible by $p$.

\medskip\noindent
\textbf{Case 1: $v_1$ is divisible by $p$.} We cannot have $v_1\ge 3p$ by \cref{item:volume-SL}, and we cannot have $v_1=2p$ because $v_1$ is odd. So, in this case we have $v_1=p$, and comparing \cref{eq:f(H),eq:f(H)-2} yields $q(n-v_1)=n_1$. But this is impossible, because $n-v_1<n_1$ and $2(n-v_1)>n_1$ by \cref{item:volume-SL-2}.

\medskip\noindent
\textbf{Case 2: $v_2$ is divisible by $p$.} In this case we reach a contradiction for essentially the same reason as \textbf{Case 1} (swap the roles of $v_1$ and $v_2$).

\medskip\noindent
\textbf{Case 3: $q$ is divisible by $p$.} In this case we have $q\ge p$ so comparing \cref{eq:f(H),eq:f(H)-2} yields $v_1(n-v_1)\le n-1$. This is only possible if $v_1=n-1$ or $v_2=n-1$. That is to say, $H$ has an isolated vertex, as desired.
\end{proof}
\end{appendix}
\fi
\end{document}